%         "Using the LMS Class File"
% A Combined Sample File and Guide for Authors
% This file may be used as a template for writing a paper for submission to the LMS

\NeedsTeXFormat{LaTeX2e}

\documentclass{lms}
%Include your preferred graphics and mathematics packages here,
%using the command \usepackage{}

%The \newtheorem command is used to define theorem-like environments
%that normally REQUIRE A PROOF, for example:
\newtheorem{theorem}{Theorem}[section] % 1st argument is your name for it
\newtheorem{lemma}[theorem]{Lemma}     % 2nd argument is what is printed
\newtheorem{corollary}[theorem]{Corollary}

%To control the numbering sequence of these environments, see
%Lamport's book on LaTeX [2, p. 193].

%The \newnumbered command can be used to define environments or
%independent statements that DO NOT REQUIRE A PROOF. The usual ones are:
\newnumbered{assertion}{Assertion}    % 1st argument is your name for it
\newnumbered{conjecture}{Conjecture}  % 2nd argument is what is printed
\newnumbered{definition}{Definition}
\newnumbered{hypothesis}{Hypothesis}
\newnumbered{remark}{Remark}
\newnumbered{note}{Note}
\newnumbered{observation}{Observation}
\newnumbered{problem}{Problem}
\newnumbered{question}{Question}
\newnumbered{algorithm}{Algorithm}
\newnumbered{example}{Example}
\newunnumbered{notation}{Notation} % This is usually unnumbered
% The numbering sequence of these environments can be controlled in the
% same way as for \newtheorem; see Lamport's book on LaTeX, p. 193.

% The default LMS numbering of equations in long papers is (1.1), (1.2), (2.1), etc.
% In short papers, to change the numbering to (1), (2), etc., 'uncomment' the next line.
% \simpleequations
% Otherwise, use the AMS \numberwithin command.

\usepackage{amsrefs}
\usepackage{enumerate}
\usepackage{mathbbol}
\usepackage{amssymb}
\usepackage{braket}
\usepackage{longtable}
\usepackage[colorlinks]{hyperref}
\usepackage{caption}
\usepackage{mathtools}
\usepackage{microtype}

\newcommand{\N}{\mathbb{N}}
\newcommand{\Z}{\mathbb{Z}}
\newcommand{\R}{\mathbb{R}}  % The real numbers.

\newcommand{\Q}{\mathbb{Q}}

%\newcommand{\set}[1]{\left\{ #1 \right\}}
 %\DeclarePairedDelimiterX\set[1]\lbrace\rbrace{\setaux#1}
 %\def\setaux#1|{#1\;\delimsize\vert\;}

\newcommand{\bs}{\backslash}

\newcommand{\floor}[1]{\left \lfloor #1 \right \rfloor}

\newcommand\GG{{\mathcal G}}

\newcommand\LL{{\mathcal L}}

 %mathbbol package
  %mathbbol package

\newcommand\acl{\hbox{\rm acl}}

 %Uses T1 encoding: \usepackage[T1]{fontenc}

%\newcommand\implies{\Rightarrow}

\newcommand\az{{\aleph_0}}

%   \newnumbered{theorem:main}{\begin{NoHyper}\bf{Theorem \ref{thm:main}}\end{NoHyper}}
   
%      \newtheorem*{theorem:prim}{\begin{NoHyper}\bf{Theorem \ref{thm:prim}}\end{NoHyper}}
      
 %        \newtheorem*{theorem:phi}{\begin{NoHyper}\bf{Theorem \ref{thm:phi growth}}\end{NoHyper}}
         
  %          \newtheorem*{theorem:pfe}{\begin{NoHyper}\bf{Corollary \ref{thm:pfe}}\end{NoHyper}}
\newcommand\rn{{f_M(n)}} 
\newcommand\sub{{\varphi_M(n)}} 
 
\newcommand{\Wr}{\mathrm{\thinspace Wr \thinspace}}

% TOP MATTER

\title[Monadic stability and growth rates]% end with percent
 {Monadic stability and growth rates of $\omega$-categorical structures} % This is the full title of the paper
% Use lowercase letters in title except for proper names
% Avoid equations in title if possible
% Do not use the \thanks{} command; use \extraline{} instead (see below).

\author{S. Braunfeld}

%Insert `2000 Mathematics Subject Classification' numbers here:
\classno{05E18, 03C15 (primary), 05A16 (secondary)}

\begin{document}
\maketitle

\begin{abstract}
For $M$ $\omega$-categorical and stable, we investigate the growth rate of $M$, i.e. the number of orbits of $Aut(M)$ on $n$-sets, or equivalently the number of $n$-substructures of $M$ after performing quantifier elimination. We show that monadic stability corresponds to a gap in the spectrum of growth rates, from slower than exponential to faster than exponential. This allows us to give a nearly complete description of the spectrum of slower than exponential growth rates (without the assumption of stability), confirming some longstanding conjectures of Cameron and Macpherson and proving the existence of gaps not previously recognized.
\end{abstract}

%\part{Use this type of header for very long papers only}
% use lowercase except for proper names

\section{Introduction}

 Generalizing the classic combinatorial problem of counting the orbits of a group acting on a finite set, Cameron began the study of counting the orbits of a group acting on a countably infinite set. In particular, the {\em growth rate} of $G$, i.e. the function $f_G(n)$ counting the number of orbits of $n$-sets, has received much attention under the  additional assumption it is always finite, which is equivalent to assuming the orbits are induced by $Aut(M)$ acting on some $\omega$-categorical $M$. The growth rate may also be viewed as counting the number of $n$-types with distinct entries, up to reordering the variables, or as counting the number of unlabeled isomorphism types of size $n$ in hereditary classes arising as the substructures of a homogeneous $\omega$-categorical relational structure.
 
 As is commonly observed in the growth rates of hereditary classes, there are gaps in the allowable asymptotic behavior of the function $\rn= f_{Aut(M)}(n)$. Our first result proves one such gap under the model-theoretic assumption that $M$ is stable, corresponding to whether or not $M$ is monadically stable. We say $f(n)$ is {\em slower than exponential} if $f(n) < c^n$ for every $c > 1$, and {\em faster than exponential} if $f(n)>c^n$ for every $c>1$ (here, and for the remainder of the paper, the bounds on growth rates only hold eventually).

\begin{theorem} \label{thm:1.1}
Suppose $M$ is $\omega$-categorical and stable. Then one of the following holds.
\begin{enumerate}
\item $M$ is monadically stable, and $\rn$ is slower than exponential.
\item $M$ is not monadically stable, and $\rn$ is faster than exponential. (More precisely, $f_M(n) > \floor{n/4}!$.)
\end{enumerate}
\end{theorem}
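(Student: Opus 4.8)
The plan is to prove the two alternatives by genuinely different arguments, joined by the structure/non-structure dichotomy for monadic stability essentially due to Baldwin and Shelah: a stable theory is monadically stable exactly when it omits a short list of ``coding configurations'', and the presence of any one of them gives, over a finite parameter set and possibly in $M^{\mathrm{eq}}$, a definable relation that becomes order-unstable after naming a unary predicate. Concretely I would (i) recall this characterization; (ii) in the non-monadically-stable case, convert a witnessing configuration into the lower bound $\rn \geq \floor{n/4}!$; and (iii) in the monadically-stable case, deduce from the omission of all the forbidden configurations a bounded-height tame coordinatization of $M$ and read off a $2^{o(n)}$ upper bound.

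For (ii), suppose $M$ is stable but not monadically stable. The Baldwin--Shelah analysis supplies a finite tuple $\bar c$, a formula $\theta(x,y)$ over $\bar c$, and infinite sequences $(a_i)$, $(b_j)$ of (possibly imaginary) elements whose $\theta$-trace is ``unconstrained'' inside a fixed pattern class --- enough that a unary colouring makes $\theta$ have the order property. Using $\omega$-categoricity (finitely many types, quantifier elimination after a finite expansion, and a Ramsey--compactness homogenization of the sequences) I would arrange that for every $k$ and for each of at least $k!$ patterns --- aiming to encode the permutations of $[k]$ via an order/betweenness pattern carried by $\theta$ together with a small rigid scaffold of extra parameters --- there is a tuple of length at most $4k$ whose quantifier-free type over $\bar c$ recovers the pattern. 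Distinct patterns then yield pairwise non-isomorphic induced substructures of size $\leq 4k$, so with $k=\floor{n/4}$ (the factor $4$ comfortably absorbing $\bar c$ and the scaffold for large $n$) we obtain at least $\floor{n/4}!$ substructures of size $\leq n$; since $\rn$ is non-decreasing (Cameron's monotonicity of the profile), or simply by padding with further points of the configuration, $\rn \geq \floor{n/4}!$, which is faster than exponential. The delicate point is \emph{legibility}: the configuration is stable, so $\theta$ cannot by itself carry a half-graph, and one must attach exactly the right auxiliary definable structure so that the abstract isomorphism type of the finite substructure still determines the pattern.

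For (iii), suppose $M$ is monadically stable and $\omega$-categorical. I would show that $M$ is coordinatized by a $\emptyset$-definable tree of some finite height $N$ in which the structure induced on the children of any node is again monadically stable and $\omega$-categorical but of strictly smaller complexity (measured, say, by number of $2$-types, or by rank), with base case a pure set carrying finitely many constants --- in the spirit of Lachlan's structure theory for stable $\omega$-categorical structures. This is precisely where monadic stability does more than stability: the omitted configurations rule out the heavy fibres --- vector-space-like or grid-like pieces --- that otherwise occur and that \emph{do} produce super-exponential growth. Granting the coordinatization, an $n$-element substructure of $M$ is coded by a decorated rooted forest of height $\leq N$ with $n$ leaves, the decorations drawn from a fixed finite set and the local structure on each sibling set from a bounded family; the number of such is crudely at most the $N$-fold iterate of the integer-partition function, which is $\exp\!\big(O(n^{N/(N+1)})\big)=2^{o(n)}$ --- slower than every exponential.

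The main obstacle, I expect, is (iii): producing the bounded-height tame coordinatization. ``Stable and $\omega$-categorical'' by itself permits super-exponential growth (again, vector spaces and grids), so the argument must use monadic stability in two places --- to cap the \emph{height} of the coordinatizing tree and to force the \emph{fibres} to be negligible for counting --- and identifying the right complexity measure that provably descends to fibres is the heart of the matter. Once the decomposition is in hand, the counting on each side is routine: an iterated-partition estimate for the upper bound, and a Ramsey-plus-compactness pattern-realization for the lower bound.
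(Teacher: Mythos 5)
Your overall architecture matches the paper's (Baldwin--Shelah non-structure for the lower bound, Lachlan-style coordinatization for the upper bound), but both halves have a genuine gap. In the non-monadically-stable case you have misidentified what Baldwin--Shelah actually provides. It is not a binary formula $\theta(x,y)$ with an ``unconstrained trace'': since $M$ is stable, no definable binary relation can carry a half-graph or a random bipartite graph, and your plan to encode permutations ``via an order/betweenness pattern'' is unavailable for the same reason --- there is no definable order to exploit. The correct output (Lemma 4.2.6 of Baldwin--Shelah, used as Theorem \ref{thm:ms coding} in the paper) is a \emph{ternary} coding formula $\psi(a,b,c)$ defining a bijection from $A\times B$ onto $C$; one encodes an arbitrary bipartite graph on $A'\times B'$ by choosing \emph{which elements of $C$} to include in the finite substructure, so the pattern is carried by the choice of points, not by any unstable relation. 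You correctly flag ``legibility'' as the delicate point but do not resolve it, and your proposed resolution (a rigid scaffold making the abstract isomorphism type determine the pattern) is not how it is done. The paper's resolution is a counting trick: work in an expansion $N^+$ by finitely many constants and $k$ unary predicates, where the count of expanded substructures is at least the number of bipartite graphs with $n$ edges and no isolated vertices, roughly $(n/(\log n)^{2+\epsilon})^n$; since a fixed $n$-substructure of $N$ admits at most $(2^k)^n$ unary expansions and the constants cost only a polynomial factor (Lemmas \ref{lemma:const} and \ref{lemma:unary growth}), dividing a super-exponential count by an exponential one still leaves $\varphi_M(n) > \floor{n/4}!$. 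Without this division step, or some substitute for it, your lower bound does not go through.

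In the monadically stable case you have correctly identified the needed input --- a bounded-height coordinatization with symmetric-group-like fibres --- but you leave it as ``the main obstacle'' rather than supplying it; the paper simply invokes Lachlan's classification of $\omega$-categorical monadically stable structures as hereditarily cellular (Theorem \ref{thm:G2}), together with the group-theoretic reduction of $\omega$-stretches to $H \Wr S_\infty$ up to finite index (Lemmas \ref{lemma:stretch growth} and \ref{lemma:fin index}) and the fact that wreathing with $S_\infty$ preserves slower-than-exponential growth (Lemma \ref{lemma:wr prod}). Your quantitative claim that the $N$-fold iterated partition function is $\exp\bigl(O\bigl(n^{N/(N+1)}\bigr)\bigr)$ is also false: already the second iterate grows like $\exp\bigl(cn/\log n\bigr)$, and in general the correct asymptotics involve iterated logarithms, $\exp\bigl((c+o(1))\,n/(\log^{r}n)^{1/k}\bigr)$ (Theorem \ref{thm:gpart} and Lemma \ref{lemma:d3 growth}). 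This error happens not to matter for the qualitative conclusion, since these functions are still slower than every exponential, but it would matter for any finer statement about the spectrum of growth rates.
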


Model theoretic dividing lines have been shown to be significant for $\rn$ before; for example \cite{Mac3} studies the influence of the independence property. As these dividing lines were developed for counting problems related to infinite models, it is natural that they are relevant for counting finite models as well, particularly when phrased as a type-counting problem.

 Recent work of Simon \cite{Sim} reduces many questions about the behavior of $\rn$ to the stable case, and so we may confirm two longstanding conjectures of Macpherson and prove the existence of gaps not previously recognized.

\begin{theorem}[\cite{Mac1}*{Conjecture 3.2}]
Suppose $M$ is $\omega$-categorical and primitive, and $f_M(n)$ is not constant equal to 1. Then there is some polynomial $p(n)$ such that $f_M(n) > \frac{2^n}{p(n)}$.
\end{theorem}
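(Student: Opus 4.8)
The plan is to derive this from Theorem~\ref{thm:1.1} together with Simon's reduction of growth-rate questions for $\omega$-categorical structures to the stable case \cite{Sim}. First I would use \cite{Sim} to replace $M$ by a stable $\omega$-categorical structure $M'$ that is again primitive and non-trivial and whose growth rate bounds $f_M(n)$ from below up to a polynomial factor; the idea is that for an unstable $\omega$-categorical structure the growth is already bounded below, up to a polynomial, by that of an associated stable structure, so that a lower bound $f_{M'}(n) > 2^n/p(n)$ transfers to $f_M(n) > 2^n/(p(n)q(n))$ for a further polynomial $q$. It therefore suffices to prove the statement under the additional hypothesis that $M$ is itself stable.

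So suppose now that $M$ is stable, $\omega$-categorical, primitive, and that $f_M(n)$ is not constantly $1$, and apply Theorem~\ref{thm:1.1}. If $M$ is not monadically stable, then $f_M(n) > \floor{n/4}!$, and since $\floor{n/4}!$ eventually exceeds $2^n$ — a fortiori $2^n/p(n)$ — for every polynomial $p$, we are done, with a far stronger conclusion than required. It remains to rule out the case that $M$ is monadically stable, and for this I would show that a primitive, $\omega$-categorical, monadically stable structure is interdefinable with the pure set, so that $f_M(n) = 1$ for all $n$, contradicting the hypothesis. This I would extract from the structure theory of $\omega$-categorical monadically stable structures, according to which such a structure is assembled from pure sets by finitely many ``tree-like'' steps, each non-trivial step destroying primitivity (for instance by introducing a proper $\emptyset$-definable equivalence relation); a primitive one thus collapses to a pure set. (Alternatively, this collapse should be readable off the description of the slower-than-exponential spectrum obtained elsewhere in the paper.)

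The hard part will be this last implication — that primitivity is incompatible with any non-trivial monadically stable structure — since the rest is bookkeeping built on Theorem~\ref{thm:1.1} and \cite{Sim}. A secondary point requiring care is the precise form of the reduction in the first step: one must check that it costs at most a polynomial factor and can be arranged to keep $M'$ primitive with $f_{M'}(n)$ non-constant, or else handle directly any residual case where primitivity is lost, in which the growth is expected to be at least exponential already by an argument exploiting instability (the order property).
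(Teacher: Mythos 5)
Your overall route is the same as the paper's: reduce to the stable case via Simon's results, apply Theorem~\ref{thm:1.1} to conclude a putative counterexample is monadically stable, and then use Lachlan's classification to see that a primitive, infinite, monadically stable (i.e.\ hereditarily cellular) structure must be a pure set, contradicting $f_M(n)\neq 1$. The second and third steps are exactly what the paper does, and your observation that a nontrivial cellular-like partition always produces a proper $\emptyset$-definable (equivalently, $Aut(M)$-invariant) equivalence relation, so that primitivity forces collapse to a pure set, is precisely the paper's one-line justification.

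The one point where your plan as written is off is the first step. The reduction you describe --- replacing $M$ by an associated stable $M'$ whose growth rate bounds $f_M(n)$ from below up to a polynomial factor --- does not correspond to either of Simon's theorems as used in the paper. Theorem~\ref{thm:S 1.6} produces a stable reduct with \emph{equal} growth rate, but its hypothesis is that $\rn$ stays below $\phi^n/p(n)$ for every polynomial $p$; a counterexample to the present statement only gives growth below $2^n/p(n)$, and since $\phi<2$ this does not imply the $\phi^n$ hypothesis, so Theorem~\ref{thm:S 1.6} cannot be invoked. The correct tool, and the one the paper uses, is Theorem~\ref{thm:S 1.3}: under exactly the hypotheses of a counterexample (primitive, $\rn$ not constantly $1$, no polynomial $p$ with $\rn\geq 2^n/p(n)$) it concludes that $M$ \emph{itself} is stable, so there is no passage to an auxiliary structure and none of the bookkeeping about preserving primitivity or non-constancy that you flag as a concern. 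Your fallback suggestion --- handling the unstable case directly ``by an argument exploiting the order property'' --- is not a routine step; getting growth $2^n/p(n)$ from instability in the primitive case is the hard content of Simon's theorem, so it should be cited rather than sketched. With Theorem~\ref{thm:S 1.3} substituted for your step 1, your argument coincides with the paper's proof.
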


\begin{theorem} \label{thm:intro phi growth}
Suppose $M$ is $\omega$-categorical and $\rn < \frac{\phi^n}{p(n)}$, for every polynomial $p(n)$, with $\phi$ the golden ratio. Then $\rn$ is slower than exponential and one of the following holds.
\begin{enumerate}
\item There are $c>0$, $k \in \N$ such that $\rn \sim cn^k$.
\item There are $c>0$, $k \in \N$ such that  $\rn =\exp\left((c+o(1))\left(n^{1-\frac1k}\right)\right)$
\item Let $\log^r(n)$ denote the $r$-fold iterated logarithm. There are $c>0$ and $k$, $r \in \N$ such that 
$ \rn = \exp\left((c+o(1))\left(\frac{n}{\left(\log^{r}(n)\right)^{1/k}}\right)\right)$
\end{enumerate}
Furthermore, in the first case all $k$ are achievable, in the second case all $k \geq 2$ are, and in the third case all $(r, k) \in (\N^+)^2$ are.
\end{theorem}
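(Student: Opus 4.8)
\medskip

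\noindent\emph{Proof strategy.} The plan is to reduce to the monadically stable case, put every $\omega$-categorical monadically stable structure into a normal form, and then read the growth rate off that form.

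For the reduction: the hypothesis $\rn < \phi^n/p(n)$ (for every polynomial $p$) is precisely the regime to which Simon's reduction to the stable case \cite{Sim} applies, the golden ratio being the threshold below which the arguments there force $M$ to be stable; so I may assume $M$ is stable. Since also $\rn < \floor{n/4}!$ eventually, Theorem~\ref{thm:1.1} rules out the non-monadically-stable alternative, and therefore $M$ is monadically stable and $\rn$ is slower than exponential. (For $\omega$-categorical $M$ the three conditions -- slower-than-exponential growth, the displayed polynomial bound, and monadic stability -- are in fact equivalent, which is the gap between exponential and subexponential behaviour referred to in the introduction.)

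The substantive step is a structure theorem: I would show that every $\omega$-categorical monadically stable $M$ has, up to an equivalence preserving the orbit counts $\rn$, a normal form as a finite rooted tree of operations with finite structures at the leaves. Two families of operation suffice: the \emph{cellular} ones -- finite disjoint unions, expansions by finitely many unary predicates, and blow-ups of each point into a copy of a fixed finite structure -- which over a leaf give polynomial growth $\sim cn^{d}$; and the operation $N\mapsto N^{(\infty)}$ imposing a generic equivalence relation whose classes are copies of $N$, infinite in number and each infinite, with no structure between classes. Monadic stability is exactly what forbids ``grid-like'' combinations such as two cross-cutting generic equivalence relations (whose expansion by a single unary predicate already has the independence property), so that the construction is genuinely a tree, while $\omega$-categoricity bounds its height. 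I expect this normal form to be the principal obstacle: it needs the model theory of monadic stability together with a coordinatization of the $\omega$-categorical, finite-rank case, sharp enough that finite modifications are manifestly irrelevant to the asymptotics.

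Granting the normal form, the growth rate falls to analytic combinatorics. Since an $n$-subset of $N^{(\infty)}$ is a multiset of orbit-types of finite subsets of $N$, we have $G_{N^{(\infty)}}(x) = \prod_{m\geq 1}(1-x^m)^{-f_N(m)}$, equivalently $\log G_{N^{(\infty)}}(e^{-t}) = \sum_{i\geq 1}\frac1i\,G_N(e^{-it})$ with $G_N(x)=\sum_n f_N(n)x^n$, and the three cases come from iterating Meinardus-type asymptotics for such products up the tree. With no application of $(\cdot)^{(\infty)}$ we are in the cellular world, giving case (1), $\rn\sim cn^{k}$, with every $k$ realized (e.g.\ by $k+1$ disjoint infinite sets). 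One application of $(\cdot)^{(\infty)}$ turns growth $\sim cn^{d}$ into $\exp\big((c'+o(1))\,n^{\,1-1/(d+2)}\big)$, which is case (2) with $k=d+2$, so all $k\geq 2$ arise as $d$ ranges over $\N$. Each further application replaces this by $\exp\big((c''+o(1))\,n/(\log^{r}n)^{1/(d+1)}\big)$, where $r$ is the number of applications beyond the first: this is case (3) with that $r$ and with $k=d+1$, so all $(r,k)\in(\N^+)^2$ arise, and the iterated-logarithm factors are exactly the newly exhibited gaps. (Disjoint unions stay within the same case and finite modifications do not affect asymptotics, so the list is complete; in effect the tree is governed by the largest number of $(\cdot)^{(\infty)}$'s along a root-to-leaf branch together with the relevant leaf dimension.) The delicate point is carrying the leading constant through each saddle-point step; the $o(1)$ in the exponents is kept because the sub-leading behaviour of these iterated partition-type functions is not needed. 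Finally the ``furthermore'' clause is checked on the witnesses named above -- $k+1$ disjoint infinite sets for (1), $N^{(\infty)}$ over a $(k-2)$-dimensional cellular leaf for (2), and $r+1$ nested generic equivalence relations over a $(k-1)$-dimensional cellular leaf for (3) -- each being readily $\omega$-categorical and monadically stable with the stated growth.
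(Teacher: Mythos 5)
Your proposal is correct and follows essentially the same route as the paper: Simon's reduction to the stable case, Theorem \ref{thm:1.1} to force monadic stability, Lachlan's classification of $\omega$-categorical monadically stable structures as hereditarily cellular (your ``normal form'' is exactly this theorem, which the paper cites from \cite{Lach} rather than reproving), and then the product-form generating function for wreath products with $S_\infty$ together with Meinardus-type asymptotics, iterated according to depth, with the same witnesses for the ``furthermore'' clause. One parenthetical claim is false and should be deleted: slower-than-exponential growth is \emph{not} equivalent to monadic stability for $\omega$-categorical $M$ --- $(\Q,\leq)$ has constant growth rate $1$ but is not even stable --- which is precisely why Simon's reduction to a stable reduct is needed as a separate first step; fortunately nothing in your argument relies on that aside.
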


\begin{theorem}[\cite{Mac2}*{Conjecture 1.4}]
Suppose $M$ is $\omega$-categorical and $f_M(n)$ is not bounded above by a polynomial, but there is some $\epsilon > 0$ such that $\rn$ is bounded above by $e^{n^{1-\epsilon}}$. Then there is some $k \in \N$ such that, for any $\epsilon > 0$, 
 \[\exp\left(n^{(1-1/k) -\epsilon}\right) < \rn < \exp\left(n^{(1-1/k) +\epsilon}\right)\]
\end{theorem}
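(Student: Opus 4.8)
The plan is to derive this statement as a corollary of Theorem~\ref{thm:intro phi growth}. Write $\epsilon_0 > 0$ for the fixed exponent supplied by the hypothesis, so that $\rn \le e^{n^{1-\epsilon_0}}$ eventually; the first task is to verify the hypothesis of Theorem~\ref{thm:intro phi growth}. For any polynomial $p$ one has $\phi^n/p(n) = \exp\!\big(n\ln\phi - O(\log n)\big)$, and since $\ln\phi > 0$ while $n^{1-\epsilon_0} = o(n)$, it follows that $n^{1-\epsilon_0} < n\ln\phi - O(\log n)$ for all large $n$; hence $\rn < \phi^n/p(n)$ eventually, for every polynomial $p$. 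Thus $\rn$ is slower than exponential and obeys one of the three asymptotic descriptions (1)--(3) of Theorem~\ref{thm:intro phi growth}.

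The second step is to rule out two of these cases using the remaining hypotheses. Case (1) gives $\rn \sim cn^k$, which is polynomially bounded, contradicting the assumption that $\rn$ is not bounded above by a polynomial. Case (3) gives $\rn = \exp\!\big((c+o(1))\,n/(\log^r n)^{1/k}\big)$ with $c > 0$ and $r \ge 1$; since $\log^r n \le \log n$ for large $n$,
\[
\frac{n/(\log^r n)^{1/k}}{n^{1-\epsilon_0}} = \frac{n^{\epsilon_0}}{(\log^r n)^{1/k}} \ge \frac{n^{\epsilon_0}}{(\log n)^{1/k}} \longrightarrow \infty,
\]
so eventually $(c+o(1))\,n/(\log^r n)^{1/k} > n^{1-\epsilon_0}$, whence $\rn > e^{n^{1-\epsilon_0}}$, again a contradiction. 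Therefore we land in case (2): there are $c > 0$ and $k \in \N$ with $\rn = \exp\!\big((c+o(1))\,n^{1-1/k}\big)$, and necessarily $k \ge 2$, since $k = 1$ would make $\rn$ bounded.

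It then remains to read off the stated double inequality. Fix $\epsilon > 0$. For all large $n$ the coefficient satisfies $c/2 \le c + o(1) \le 2c$, and since $n^{\epsilon} \to \infty$ we have both $(c/2)\,n^{\epsilon} > 1$ and $2c < n^{\epsilon}$ for large $n$; combining these inequalities gives
\[
n^{(1-1/k)-\epsilon} < (c+o(1))\,n^{1-1/k} < n^{(1-1/k)+\epsilon}
\]
for all large $n$, and exponentiating yields $\exp\!\big(n^{(1-1/k)-\epsilon}\big) < \rn < \exp\!\big(n^{(1-1/k)+\epsilon}\big)$, as required. The substantive work sits entirely in Theorem~\ref{thm:intro phi growth} (and, behind it, in Theorem~\ref{thm:1.1} together with Simon's reduction \cite{Sim} to the stable case); the only point demanding real care here is that the comparison of $n/(\log^r n)^{1/k}$ with $n^{1-\epsilon_0}$ be carried out against the specific $\epsilon_0$ furnished by the hypothesis, and not an arbitrary exponent, so I do not anticipate a genuine obstacle beyond this bookkeeping.
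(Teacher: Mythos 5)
Your proposal is correct and takes essentially the same route as the paper: the paper states this result as an immediate corollary of Theorem~\ref{thm:phi growth} (given there without further proof), and your argument simply supplies the routine verification — checking the $\phi^n/p(n)$ hypothesis against the bound $e^{n^{1-\epsilon_0}}$, eliminating cases (1) and (3), and absorbing the $(c+o(1))$ factor into the $\pm\epsilon$ in the exponent. No gaps.
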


 We also slightly sharpen an earlier result of Macpherson to obtain a gap from polynomial growth to partition function growth (Corollary \ref{thm:poly part}). This gap corresponds to whether $M$ is cellular, which also corresponds to a gap for the analogous labeled enumeration question \cites{BB, LT}.

The proof of Theorem \ref{thm:1.1} rests on two model-theoretic results. That a monadically unstable structure has fast growth rate follows from a result of Baldwin and Shelah \cite{BS} that allows us to code bipartite graphs in a mild expansion of such a structure. That a monadically stable structure has slow growth rate follows from Lachlan's classification of $\omega$-categorical monadically stable structures as hereditarily cellular \cite{Lach}, which in turn depends on the results of \cite{BS}.
 
\subsection{Conventions and notation}
Unless otherwise stated, $M$ denotes a countable $\omega$-categorical structure in a countable relational language.

Unless otherwise stated, inequalities involving growth rates are to be understood as holding eventually.

Let $G$ be a permutation group acting on $X$, and $A \subset X$. Then $G_{(A)}$ denotes the pointwise stabilizer of $A$, while $G_{\set{A}}$ denotes the setwise stabilizer of $A$. Also, if $A$ is fixed setwise by $G$, then $G^A$ is the permutation group induced by $G$ on $A$.

$S_\infty$ denotes the automorphism group of a countable pure set.

\section{Monadic stability}

In this section, we introduce the results we need about monadic stability. First we give the Baldwin-Shelah characterization in terms of whether a theory admits coding. Then we give Lachlan's classification of $\omega$-categorical monadically stable structures.

\begin{definition}
A theory $T$ is {\em monadically stable} if every expansion of $T$ by unary predicates is stable. A structure $M$ (not necessarily $\omega$-categorical) is monadically stable if $Th(M)$ is.
\end{definition}

\begin{definition} \label{def:coding}
An $\LL$-theory $T$ (not necessarily $\omega$-categorical) {\em admits strongish coding} if there is some $\LL$-formula (with parameters) $\psi(a,b,c)$, some $M \models T$, and infinite disjoint $A,B,C \subset M$ such that $\psi$ defines the graph of a bijection from $A \times B$ to $C$, if we restrict to cases where $a \in A, b \in B$, and $c \in C$.
\end{definition}

The definition of {\em admits coding} from \cite{BS} allows $\psi$ to make use of predicates for the sets $A,B$, and $C$. There is also a definition of {\em admits strong coding}, which is too strong for our purposes.

\begin{remark} \label{rem:coding}
Suppose $T$ admits strongish coding, as witnessed by $M \models T$, $\psi$, and $A, B, C \subset M$. Expanding $M$ by the constants appearing in $\psi$, by unary predicates for $A,B,C$, and by another suitably chosen unary predicate $D \subset C$, we may define the edges of any bipartite graph on $A \times B$ by $E(a,b) \iff \exists (d \in D) \psi(a,b,d)$.
\end{remark}

\begin{theorem}[\cite{BS}*{Lemma 4.2.6}] \label{thm:ms coding}
Suppose $T$ is a countable theory that is stable but not monadically stable. Then $T$ admits strongish coding.
\end{theorem}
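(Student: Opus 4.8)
The plan is to unwind the failure of monadic stability into a concrete ``half-graph'' configuration in an expansion by finitely many unary predicates, and then to exploit stability of $T$ to replace that configuration by a genuine $\LL$-definable bijective grid. First, if $T$ is not monadically stable then some expansion $(M,\overline P)$ by unary predicates is unstable, so some $\LL(\overline P)$-formula has the order property; as the order property is witnessed by a single formula using only finitely many predicates $P_1,\dots,P_n$, we may fix an $\LL(P_1,\dots,P_n)$-formula $\phi(\overline x,\overline y;\overline e)$ with parameters $\overline e$ and tuples $(\overline a_i)_{i<\omega}$, $(\overline b_j)_{j<\omega}$ with $(M,\overline P)\models\phi(\overline a_i,\overline b_j;\overline e)$ iff $i<j$. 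By Ramsey's theorem and compactness I would then pass to a sequence $(\overline a_i\overline b_i)_{i<\omega}$ that is indiscernible over $\overline e$ in $\LL(\overline P)$ and still realizes the half-graph, and refine once more so that, for each coordinate, whether the corresponding entry of $\overline a_i$ (resp. $\overline b_j$) lies in a given $P_k$ depends only on the coordinate, not on $i$ (resp. $j$). Thus the only essential occurrences of the $P_k$ in $\phi$ are under quantifiers.

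The next step is a normal-form reduction: eliminating nested and iterated $P_k$-quantifiers, one arrives at the situation where the half-graph is detected by whether the $P_k$ meet an $\LL$-definable family, $D^k_{ij}=\delta_k(M;\overline a_i,\overline b_j,\overline e)$ — schematically, $\bigvee_k\big(D^k_{ij}\cap P_k\neq\emptyset\big)$ tracks ``$i<j$''. Carrying this out while staying first-order over $\LL$ and, crucially, keeping the auxiliary definable sets finite or otherwise controlled is where stability of $T$ is genuinely used (finiteness of local ranks, absence of the relevant infinitary behaviour), and I would do it by induction on the complexity of $\phi$, following the analysis of \cite{BS}. With this in hand, the heart of the argument is that a \emph{single} consistent family $\overline P$ realizing the strict-order pattern over $\{D^k_{ij}\}$ cannot be ``low-dimensional'': if the canonical parameter of $D^k_{ij}$ in $M^{\mathrm{eq}}$ took only finitely many values along the sequences, or depended on only one of $i,j$, then intersecting with $\overline P$ could not separate $i<j$ from $i\ge j$. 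Combining this with indiscernibility and with the bounded alternation of the stable families $\delta_k$, one extracts an infinite sub-grid $I\times J$ on which $(i,j)\mapsto$ (the canonical parameter of the relevant $D^k_{ij}$) is injective; choosing $I,J$ disjoint and translating so that the three sets are pairwise disjoint, we set $A=\{\overline a_i:i\in I\}$, $B=\{\overline b_j:j\in J\}$, $C$ the set of those canonical parameters, and $\psi(\overline x,\overline y,z)$ the formula saying $z$ is the canonical parameter of the appropriate $\delta_k(M;\overline x,\overline y,\overline e)$; this is an $\LL^{\mathrm{eq}}$-formula defining the graph of a bijection $A\times B\to C$, and a last (routine but fiddly) step replaces the imaginaries by real tuples coding the same data to land in $\LL$ itself, giving strongish coding.

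The main obstacle is precisely the passage from the \emph{negative} datum of a half-graph — which only asserts that certain instances are distinguishable by the unary predicates — to a \emph{positive} $\LL$-definable bijection. Two points carry the weight: (i) the normal-form reduction isolating the role of the $P_k$ to membership in a \emph{controlled} $\LL$-definable family, which fails badly without stability; and (ii) the verification that the resulting grid is genuinely a bijection (injective in both directions) with $A,B,C$ infinite and pairwise disjoint, which is where mutual indiscernibility, the combinatorics of stable families, and a mild use of $M^{\mathrm{eq}}$ all enter. By contrast the compactness reductions and Ramsey extractions at the start are routine, and the vector-space example $\psi(a,b,c)\equiv(a+b=c)$ on transverse cosets shows that this bijection is the natural invariant to target.
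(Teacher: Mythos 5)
The paper does not actually prove this statement: it is imported wholesale from Baldwin--Shelah, and the only original content in the surrounding discussion is (a) the observation that the formula produced in the proof of \cite{BS}*{Lemma 4.2.6} is an $\LL$-formula, so that one gets \emph{strongish} coding rather than mere coding, and (b) the remark that the hypotheses of that lemma are only equivalent to monadic instability via a further argument discussed in the introduction to \cite{Lach}. Your proposal instead tries to reconstruct the Baldwin--Shelah argument, and by a route that is not theirs: they work through the machinery of decompositions of models (and the hypotheses of their Lemma 4.2.6 are phrased in those terms, which is exactly why the paper flags point (b)), whereas you start directly from an order-property configuration in a unary expansion. That more direct route may be viable, but as written it has gaps precisely at the load-bearing points.

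Concretely: (i) the ``normal-form reduction'' isolating the occurrences of the $P_k$ to membership in controlled $\LL$-definable families is asserted, with ``following the analysis of \cite{BS}'' standing in for the entire induction; this step is the content of the lemma, and you cannot both claim to prove the lemma and defer its core to the source. (ii) The injectivity argument does not close: knowing that the canonical parameter of $D^k_{ij}$ depends non-trivially on both $i$ and $j$ does not yield injectivity of $(i,j)\mapsto d_{ij}$ on a sub-grid (it could factor through a non-injective function of the pair), and passing from ``the $\overline P$ separate $i<j$ from $i\ge j$'' to a definable \emph{bijection} requires a genuine argument with the indiscernible array that you have not supplied. (iii) Your $A$, $B$, $C$ consist of tuples and imaginaries, whereas Definition \ref{def:coding} requires infinite disjoint subsets of $M$ itself; this matters downstream, since Remark \ref{rem:coding} and Lemma \ref{lemma:ms} expand by \emph{unary} predicates, including a predicate $D\subset C$, so $C$ cannot be left in $M^{\mathrm{eq}}$ and the ``routine but fiddly'' descent to real singletons is not optional. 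If you intend to use the result as the paper does, the correct move is to cite it; what then genuinely needs checking is only that the witnessing formula in \cite{BS} is an $\LL$-formula and that their hypotheses match monadic instability.
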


Although the conclusion of \cite{BS}*{Lemma 4.2.6} is only that $T$ admits coding, it is clear from the proof that $T$ admits strongish coding, as the formula used is an $\LL$-formula. Also, it is not immediate that the hypotheses of \cite{BS}*{Lemma 4.2.6} are equivalent to monadic instability, but this equivalence is discussed in the introduction to \cite{Lach}.

We now continue to Lachlan's classification of $\omega$-categorical monadically stable structures. There are two descriptions of such structures given in \cite{Lach}, and we give both, although we primarily use the latter. Our presentation and terminology occasionally differs slightly from \cite{Lach}. It may be helpful to consider Example \ref{ex:cell} when reading the following definition.

\begin{definition} \label{def:cell part}
A {\em cellular-like partition} of a structure $M$ is a triple $(K, E_0, E_1)$ satisfying the following.
\begin{enumerate}
\item $K$ is finite and fixed setwise by $Aut(M)$. (Note: If $M$ is $\omega$-categorical, then $K = \acl(\emptyset)$.)
\item $E_0, E_1$ are $Aut(M)$-invariant equivalence relations on $M \bs K$, with $E_1$ refining $E_0$.
\item There are finitely many $E_0$-classes.
\item Each $E_0$-class splits into infinitely many $E_1$-classes.
\item For each $E_0$-class $C$, and any $Y \subset C$ a union of $E_1$-classes, $Aut(M)_{(M\bs Y)}$ induces the full symmetric group on $Y/E_1$.
\end{enumerate}
\end{definition}

\begin{definition}
	Given an $\LL$-structure $M$ and $A \subset M$, the {\em structure induced on $A$ over its complement} is the structure with universe $A$ equipped with a relation for each $\LL$-definable relation using parameters from $M \bs A$.
	
Given a cellular-like partition of $M$, we define a {\em component} of $M$ to be the structure induced on an $E_1$-class of $M$ over its complement, or on $K$ over its complement.
\end{definition}

\begin{definition}
The class of {\em hereditarily cellular} structures is the least class containing all finite structures and that is closed under forming cellular-like partitions where each component is from the class.
\end{definition}

\begin{theorem}[\cite{Lach}] \label{thm:lach}
Suppose $M$ is hereditarily cellular.
\begin{enumerate}
\item The cellular-like partition witnessing that $M$ is hereditarily cellular is unique.
\item $M$ is $\omega$-categorical $\omega$-stable, with Morley rank equal to the depth defined in Definition \ref{def:depth}.
\end{enumerate}
\end{theorem}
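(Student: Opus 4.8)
The plan is to argue by induction on the depth (Definition~\ref{def:depth}), which may be taken to be the rank of $M$ in the inductive generation of the hereditarily cellular class. The base case is $M$ finite: then condition~(4) of Definition~\ref{def:cell part} forces $K = M$, so the only cellular-like partition of $M$ is the trivial one $(M,\emptyset,\emptyset)$, giving~(1), and $M$ is trivially $\omega$-categorical and $\omega$-stable of Morley rank $0 = \mathrm{depth}(M)$. For the inductive step, let $M$ be infinite with a cellular-like partition $(K, E_0, E_1)$ all of whose components are hereditarily cellular of strictly smaller depth; by the inductive hypothesis each component is $\omega$-categorical and $\omega$-stable with Morley rank equal to its depth. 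Write $C_1,\dots,C_m$ for the finitely many $E_0$-classes; applying condition~(5) with $Y$ a union of two $E_1$-classes inside a fixed $C_j$ shows $Aut(M)$ is transitive on the $E_1$-classes of $C_j$, and an automorphism witnessing this transitivity restricts to an isomorphism of the corresponding components, so all $E_1$-classes of $C_j$ carry the same component, which we call $N_j$; write $N_K$ for the (finite) component on $K$.

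The key step is a \emph{reconstruction lemma}: $M$ is determined up to isomorphism by $N_K$, the $N_j$, and the relations $E_0, E_1$ --- equivalently, every $M$-definable relation, restricted to a tuple meeting two distinct $E_1$-classes, is ``trivial'', i.e. determined by the restrictions of the $M$-relations to the individual sets $K \cup D$ ($D$ a single $E_1$-class) together with the $E_0$-$E_1$ pattern of the tuple. If this failed, an automorphism supplied by condition~(5) moving one of the $E_1$-classes while fixing the relevant local data would violate invariance. Granting the reconstruction lemma, $\omega$-categoricity is immediate: an orbit of $Aut(M)$ on $n$-tuples is determined by (a) the distribution of the coordinates among $K$ and the $C_j$, (b) for the coordinates in each $C_j$, the partition recording which share an $E_1$-class, and (c) for each block, the type of the corresponding subtuple in $N_K$ or $N_j$; there are finitely many choices for each by the inductive hypothesis, and the reconstruction lemma (together with condition~(5), used to realize the required identifications by automorphisms) shows this data pins down the orbit, so $M$ is $\omega$-categorical by Ryll--Nardzewski. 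For stability I would carry the stronger invariant that $M$ is monadically stable: an expansion $M^+$ of $M$ by unary predicates restricts on each $D$ to an expansion of $N_D$ by unary predicates --- stable by the inductive hypothesis --- and since unary predicates create no new cross-class relations, the reconstruction lemma still exhibits $M^+$ as a reassembly of these stable structures along the finite set $K$, with each $C_j$ a generic $S_\infty$-arrangement of countably many copies of a stable structure; such a reassembly is stable, as any witness to the order property must essentially lie inside a single component or inside a pure quotient $C_j/E_1$, neither of which supports it. Hence $M$ is monadically stable, and being $\omega$-categorical it is $\omega$-stable.

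For the Morley rank computation, $N_K$ has rank $0$; for a generic $a$ in an $E_1$-class $D$ of $C_j$, the quotient sort $C_j/E_1$ is an infinite set on which $Aut(M)$ acts generically (by condition~(5)), hence is strongly minimal of rank $1$, while over a generic element of $C_j/E_1$ the point $a$ realizes the generic type of $N_j$, of rank $\mathrm{depth}(N_j)$ by the inductive hypothesis. Additivity of Morley rank for the definable fibration $C_j \to C_j/E_1$ gives $\mathrm{MR}(C_j) = \mathrm{depth}(N_j)+1$. Since the home sort of $M$ is the finite union $K \cup C_1 \cup \dots \cup C_m$, this yields $\mathrm{MR}(M) = \max(0,\, \mathrm{depth}(N_1)+1,\dots,\mathrm{depth}(N_m)+1) = 1 + \max_j \mathrm{depth}(N_j) = \mathrm{depth}(M)$, the last equality being the definition of depth.

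It remains to prove uniqueness~(1). First, $K = \acl(\emptyset)$: $K$ is finite and $Aut(M)$-invariant so $K \subseteq \acl(\emptyset)$, and by condition~(5) every point of $M \bs K$ has infinite $Aut(M)$-orbit, so $\acl(\emptyset) \subseteq K$. One then identifies $E_0$ and $E_1$ intrinsically: the $E_0$-classes of maximal component-depth must coincide for any two cellular-like partitions, since by the rank computation above they are exactly the loci carrying points of top Morley rank; on such a class $C_j$ the relation $E_1$ is recovered as the fibres of the canonical rank-$1$ ``top'' quotient of $C_j$; and one then peels off this layer and induces on the remainder. I expect the reconstruction lemma, and the intrinsic characterization of $E_0$ and $E_1$ used for uniqueness, to be the main obstacles: once these are in place the remainder is bookkeeping organized by the induction on depth.
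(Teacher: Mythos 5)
This theorem is not proved in the paper at all: it is imported verbatim from Lachlan \cite{Lach} (whose analysis in turn rests on the Baldwin--Shelah machinery behind Theorem \ref{thm:ms coding}), so there is no in-paper argument to compare yours against. Judged on its own terms, your outline has the right skeleton (induction on depth, with the work concentrated in a structure/reconstruction lemma for a single cellular-like layer), but the central ``reconstruction lemma'' is false as you state it, and everything downstream leans on it. It is not true that $M$ is determined up to isomorphism by $K$, $E_0$, $E_1$ and the components, nor that relations on tuples meeting two distinct $E_1$-classes are determined by the componentwise data and the $E_0$--$E_1$ pattern. Example \ref{ex:cell} already signals this: $G_0$ and $G_1$ share the same cellular-like partition, and the edge relation of $G_1$ between chosen points of distinct $E_1$-classes is genuine cross-class structure. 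For a sharper counterexample, take $E_1$-classes of size two with one point of each class in a unary predicate $P$, and either place a complete graph on $P$ or no edges at all: both structures satisfy all five conditions of Definition \ref{def:cell part}, have identical $(K,E_0,E_1)$ and identical components, and are not isomorphic. Worse, one can arrange (e.g.\ two ``parallel'' cliques interchanged by a global involution) that the orbit of a cross-class pair is \emph{not} determined by the local types plus the pattern, because identifying the distinguished points of different components consistently is itself cross-class data. Your appeal to condition (5) does not close this: condition (5) only controls the action induced on $Y/E_1$ by automorphisms fixing $M\bs Y$ pointwise, and says nothing directly about the structure within and between classes.

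The correct statement controls cross-class structure only up to finite index --- this is exactly why Lachlan's argument is run at the level of permutation groups, with loose unions, $\omega$-stretches, and finite-index sandwiches as in Lemmas \ref{lemma:loose dir wr} and \ref{lemma:stretch growth} (e.g.\ $Aut(M)$ contains a finite-index subgroup acting as $H \Wr S_\infty$, rather than being equal to a wreath product). Once the reconstruction step is weakened to ``up to finite index,'' your deductions of $\omega$-categoricity, monadic stability (``any witness to the order property must essentially lie inside a single component or a pure quotient'' is asserted, not proved, and the finite cross-class data is precisely what must be ruled out as a source of instability), the Morley rank additivity, and especially the uniqueness of $(K,E_0,E_1)$ all need genuinely more work; in particular your recovery of $E_1$ as ``the fibres of the canonical rank-$1$ top quotient'' presupposes a canonicity that is part of what is to be proved. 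So the proposal is a reasonable roadmap but has a real gap at its key lemma; the honest course here is to cite \cite{Lach}, as the paper does.
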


\begin{definition} \label{def:depth}
Let $M$ be hereditarily cellular, as witnessed by the cellular-like partition $(K, E_0, E_1)$. We define $depth(M)$ as follows.
\begin{enumerate}
\item If $M$ is finite, $depth(M) = 0$.
\item Otherwise, $depth(M) = \max \set{depth(C)+1 | \text{$C$ is a component of $M$}}$.
\end{enumerate}

$M$ is {\em cellular} if it is hereditarily cellular of depth 1, i.e. every component is finite.
\end{definition}

\begin{example} \label{ex:cell}
Let $G_0$ be the graph consisting of a disjoint union of infinitely many edges. This is cellular with $K = \emptyset$, $E_0$ the one-class relation, and each edge an $E_1$-class. 

The graph $G_1$, obtained from $G_0$ by choosing a single point in each $E_1$-class and adding an edge between every pair of chosen points, is also cellular with the same cellular-like partition.

Let $G_2 = G_0 \sqcup G_1 \sqcup \set{x}$, with an edge added between $x$ and every point of $G_0$. This is cellular with $K = \set{x}$, $G_0$ and $G_1$ the $E_0$-classes, and $E_1$-classes as before.
\end{example}

\begin{example}
The simplest hereditarily cellular structure of depth 2 is an equivalence relation $E$ with infinitely many classes, each infinite. Here $K = \emptyset$, $E_0$ has one class, and $E_1=E$.
\end{example}

\begin{definition} \label{def:wreath}
Given an automorphism group $Aut(M)$ and $1 \leq k \leq \infty$, the {\em wreath product of $Aut(M)$ with $S_k$}, denoted $Aut(M) \Wr S_k$, is the automorphism group of the structure obtained by taking an equivalence relation with $k$ many classes and making each class a copy of $M$, with no additional structure between classes.

A more general definition, phrased solely in terms of groups, may be found in \cite{olig}*{\S 1.2}.
\end{definition}

\begin{example}
Let $G_0$ and $G_1$ be as in Example \ref{ex:cell}. Then $Aut(G_0) \cong \Z_2 \Wr S_\infty$. This is not the case for $Aut(G_1)$.
\end{example}

Lachlan defines the operations {\em loose union} and {\em $\omega$-stretch} on permutation groups. To a first approximation loose union should be considered direct product and $\omega$-stretch should be considered the wreath product with $S_\infty$. This is made more precise in Remark \ref{rem:loose fin} and Lemma \ref{lemma:stretch growth}. 

\begin{definition} \label{def:loose stretch}
For $k \in \N$ and each $i \in [k]$, let $G_i$ be a permutation group acting on $X_i$. Then $H$ acting on $Y=\sqcup_i X_i$ is a {\em loose union} of $\set{G_i}$ if it satisfies the following.
\begin{enumerate}
\item $\set{X_1, \dots, X_k}$ is $H$-invariant.
\item $G_i = H_{\set{X_i}}^{X_i}$
\item $\left[G_i : H_{(Y\bs X_i)}^{X_i}\right] < \infty$
\end{enumerate}

Let $G$ be a permutation group acting on $X$. Then $H$ acting on $Y$ is an {\em $\omega$-stretch} of $G$ if it satisfies the following.
\begin{enumerate}
\item There is an $H$-invariant equivalence relation $E$ on $Y$.
\item $X$ is an $E$-class.
\item $|Y/E| = \az$ and $H$ induces the full symmetric group on $Y/E$.
\item $G = H_{\set{X}}^X$
\item $\left[G: H_{(Y \bs X)}^X \right] < \infty$
\end{enumerate}

We let $\GG$ denote the set of all permutation groups obtained from finite permutation groups by a finite number of loose unions and $\omega$-stretches.
\end{definition}

\begin{lemma} \label{lemma:loose dir wr}
 For $k \in \N$ and each $i \in [k]$, let $G_i$ be a permutation group acting on $X_i$, and let $H$ be a loose union of $\set{G_i}$. Then (after reordering), there is some $j \leq k$ and $k_i \in \N$ for each $i \leq j$ such that $\Pi_{1\leq i \leq j} \left(H_{(Y\bs X_i)}^{X_i}\right)^{k_i} \leq H \leq \Pi_{1\leq i \leq j} G_i \Wr S_{k_i}$, where $\left(H_{(Y\bs X_i)}^{X_i}\right)^{k_i}$ is the $k_i$-fold direct product of $H_{(Y\bs X_i)}^{X_i}$ with itself.
\end{lemma}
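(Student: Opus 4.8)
The plan is to reduce everything to the action of $H$ on the finite set of blocks $\set{X_1, \dots, X_k}$. Since this set is $H$-invariant, $H$ permutes it with finitely many orbits; after reordering, call these orbits $O_1, \dots, O_j$, set $k_i = \abs{O_i}$, and relabel the blocks inside $O_i$ as $X_{i,1}, \dots, X_{i,k_i}$ with $X_{i,1} = X_i$. Put $Y_i = \bigsqcup_{l=1}^{k_i} X_{i,l}$, so $Y = \bigsqcup_{i=1}^j Y_i$ with each $Y_i$ $H$-invariant. Because $H$ is transitive on $O_i$, for each $l$ I would fix some $h_{i,l} \in H$ with $h_{i,l}(X_{i,1}) = X_{i,l}$ (and $h_{i,1} = \mathrm{id}$); the bijections $h_{i,l}\restriction X_{i,1} \colon X_{i,1} \to X_{i,l}$ then form a single coherent system of identifications of the blocks of $O_i$ with $X_i$, and all the containments below are to be read with respect to these identifications.

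For the upper bound, I would first note that the restriction map $H \to \prod_{i=1}^j H^{Y_i}$ is an injective homomorphism, since the $Y_i$ partition $Y$; so it suffices to show $H^{Y_i} \leq G_i \Wr S_{k_i}$, the wreath product taken in its imprimitive action on $k_i$ copies of $X_i$. This is the classical embedding of a transitive imprimitive group into a wreath product: given $h \in H$, define $\sigma \in S_{k_i}$ by $h(X_{i,l}) = X_{i,\sigma(l)}$; then $h_{i,\sigma(l)}^{-1} h\, h_{i,l}$ fixes $X_{i,1}$ setwise, hence induces on $X_{i,1}$ an element $g_l$ of $H_{\set{X_{i,1}}}^{X_{i,1}}$, which equals $G_i$ (the induced group being independent, up to permutation isomorphism, of the chosen representative of $O_i$, via conjugation by the connecting elements). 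One checks by a routine computation that $h \mapsto (g_1, \dots, g_{k_i}; \sigma)$ is a homomorphism $H^{Y_i} \to G_i \Wr S_{k_i}$, which gives $H \leq \prod_{i=1}^j G_i \Wr S_{k_i}$.

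For the lower bound, write $N_i = H_{(Y\bs X_i)}^{X_i}$; since $H_{(Y\bs X_i)}$ is normal in $H_{\set{X_i}}$, $N_i$ is a subgroup of $G_i$. Fix $(n_{i,l})$ with $n_{i,l} \in N_i$, and for each $(i,l)$ pick $g_{i,l} \in H_{(Y\bs X_{i,1})}$ inducing $n_{i,l}$ on $X_{i,1}$. Then $h_{i,l}\, g_{i,l}\, h_{i,l}^{-1} \in H$ pointwise fixes $Y \bs X_{i,l}$ and, under the identification $h_{i,l}\restriction X_{i,1}$, induces $n_{i,l}$ on $X_{i,l}$. As the blocks $X_{i,l}$ are pairwise disjoint, these elements commute, and the product over all $(i,l)$ is an element of $H$ realizing the tuple $(n_{i,l})$; hence $\prod_{i=1}^j N_i^{k_i} \leq H$.

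I do not expect a genuine obstacle here — the statement is essentially the standard correspondence between transitive imprimitive actions and subgroups of wreath products, packaged so as to also record the product of the "bottom" groups as a lower sandwich. The only real care is bookkeeping: the identifications $h_{i,l}\restriction X_{i,1}$ must be chosen once and used for both inclusions, so that a single blocks-to-$X_i$ dictionary underlies the whole sandwich, and the homomorphism property of $h \mapsto (g_1,\dots,g_{k_i};\sigma)$ must be verified from the identity $h_{i,\tau\sigma(l)}^{-1}(h'h)h_{i,l} = \bigl(h_{i,\tau\sigma(l)}^{-1} h' h_{i,\sigma(l)}\bigr)\bigl(h_{i,\sigma(l)}^{-1} h\, h_{i,l}\bigr)$. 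Note also that condition (3) in the definition of loose union (finite index of $H_{(Y\bs X_i)}^{X_i}$ in $G_i$) plays no role in this particular lemma; it is needed only later, to turn the sandwich into an estimate on growth rates.
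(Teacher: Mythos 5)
Your proof is correct and follows essentially the same route as the paper: decompose the block set into $H$-orbits, embed $H$ into the product over orbits of the induced groups and then each of those into $G_i \Wr S_{k_i}$ via the standard imprimitive wreath embedding, and realize the lower bound by conjugating elements of the pointwise stabilizers $H_{(Y\bs X_{i,1})}$ around each orbit. The paper's proof is just a compressed version of this (it names the intermediate groups $H_{\set{O_i}}^{O_i}$ and $H_{(Y\bs O_i)}^{O_i}$ and leaves the wreath-product bookkeeping implicit), and your observation that condition (3) of the loose-union definition is not needed here is accurate.
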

\begin{proof}
We have that $H$ acts on $\set{X_1, \dots, X_k}$, so pick one representative from each orbit of this action, reorder so the representatives are $X_i$ for $i \leq j$, and let $k_i$ be the size of the orbit of $X_i$. Let $O_i$ be the union of all $X_\ell$ in the $H$-orbit of $X_i$, let $H_{\set{i}} = H_{\set{O_i}}^{O_i}$, and let $H_{(i)} = H_{(Y \bs O_i)}^{O_i}$. Then $H$ acts on $Y$ as some subgroup of $\Pi_{1 \leq i \leq j} H_{\set{i}}$ and some supergroup of $\Pi_{1\leq i \leq j} H_{(i)}$. Since $H_{\set{i}}$ and $H_{(i)}$ act on $O_i$ as subgroups of $G_i \Wr S_{k_i}$ and as supergroups of $\left(H_{(Y\bs X_i)}^{X_i}\right)^{k_i}$, we are finished.
\end{proof}

\begin{remark} \label{rem:loose fin}
In some sense, we would like to say that $H$ is a finite index subgroup of $\Pi_{1 \leq i \leq k}G_i$, but this is not quite true as $H$ might permute the $X_i$. But as $\Pi_{1\leq i \leq j} \left(H_{(Y\bs X_i)}^{X_i}\right)^{k_i}$ is a finite index subgroup of $\Pi_{1\leq i \leq j} G_i \Wr S_{k_i}$, we have that $H$ is as well.
\end{remark}

\begin{lemma}[\cite{Lach}*{Lemma 3.3}] \label{lemma:stretch growth}
Let $G \in \GG$ and $H$ be an $\omega$-stretch of $G$. Then, with notation as in Definition \ref{def:loose stretch}, there is $N \leq H$ of finite index such that $N$ acts as $H_{(Y \bs X)}^X \Wr S_\infty$.
\end{lemma}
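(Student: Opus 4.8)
The plan is to realise $N$ as a copy of $H_{(Y\bs X)}^X \Wr S_\infty$ lying inside $H$ and then to bound its index. Write $K := H_{(Y\bs X)}^X$, let $\{Z_i : i \in \omega\}$ be the $E$-classes with $Z_0 = X$, and let $\rho \colon H \to S_\infty$ be the (surjective) action of $H$ on $Y/E$, with kernel $H_0$. Fix $h_i \in H$ with $h_i(Z_0) = Z_i$ and $h_0 = 1$; using the bijections $h_i|_{Z_0} \colon Z_0 \to Z_i$ to identify each $Z_i$ with $X$, one sees that with these identifications $H \le G \Wr S_\infty$ (the action of any $h \in H$ on a class, transported to $X$, stabilises $X$ in the induced structure, hence lies in $H_{\set X}^X = G$), that $K \trianglelefteq G$ (it is $H_{(Y\bs X)}^X \trianglelefteq H_{\set X}^X$), and that $K \Wr S_\infty$ is a well-defined closed subgroup of $\mathrm{Sym}(Y)$. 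Also $K^{(\omega)} \le H$ (the restricted direct power): for each $i$, $H_{(Y\bs Z_i)}$ acts on $Z_i$ as the $h_i$-conjugate of $K$, hence as $K$ under our identification, and trivially off $Z_i$, and these subgroups commute for distinct $i$. I will prove (A) $K \Wr S_\infty \le H$ and (B) $[H : K \Wr S_\infty] < \infty$; then $N := K \Wr S_\infty$ does the job.

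For (A): $K \Wr S_\infty$ is the closure of $K^{(\omega)} \rtimes \mathrm{Sym}_{\mathrm{fin}}(\omega)$, and $H$ is closed (the groups in $\GG$ are automorphism groups of $\omega$-categorical structures), so it is enough to show that for each pair $i \ne j$ the group $H$ contains an element interchanging $Z_i$ and $Z_j$ with transported action in $K$ on each, and fixing every other class pointwise. I would build such an element by a back-and-forth argument: by surjectivity of $\rho$ pick $h^* \in H$ with $\rho(h^*) = (i\,j)$, so $h^*$ fixes each class other than $Z_i, Z_j$ setwise; then correct $h^*$ on larger and larger finite sets — multiplying by elements of $K^{(\omega)}$ to trivialise it on the other classes and by elements of $H_0$ to adjust it on $Z_i, Z_j$, using $[G:K] < \infty$ to push the residual action on $Z_i, Z_j$ into $K$ — and pass to a limit inside the closed group $H$. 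This construction is the main obstacle; it is where surjectivity of $\rho$, the subgroup $K^{(\omega)}$, closedness of $H$, and finiteness of $[G:K]$ all get used, and it has to be carried out uniformly enough that the corresponding statement for a finite union of classes $W$ — namely $H_{(Y\bs W)}^W = K \Wr \mathrm{Sym}(W/E)$ — comes out at the same time, since that is needed for (B).

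For (B): as $K \trianglelefteq G$ and $H \le G \Wr S_\infty$, the base power $K^\omega$ is normal in $H$ and contained in $K \Wr S_\infty \le H$. Passing to $H / K^\omega \le (G/K) \Wr S_\infty$, the image of $K \Wr S_\infty$ is precisely the top copy of $S_\infty$, and a short coset computation identifies $[H : K \Wr S_\infty]$ with $|\Phi(H_0)|$, where $\Phi \colon H_0 \to (G/K)^\omega$ sends $h$ to its tuple of defects $(h|_{Z_i} \bmod K)_i$. So I need $|\Phi(H_0)| \le [G:K]$. Conjugating $H_0$ by the transposition elements produced in (A) shows $\Phi(H_0)$ is invariant under permutation of coordinates by $S_\infty$; and if two coordinates of some $\Phi(h)$ differed, a suitable commutator would give a nontrivial element of $\Phi(H_0) \cap (G/K)^{(\omega)}$ — so it suffices to rule out nontrivial defects supported on finitely many classes. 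If $h \in H_0$ has defect supported on a finite union of classes $W$, then (using $K^{(\omega)}$ and closedness) $h$ may be replaced by an element of $H$ supported, as a permutation of $Y$, on $W$, hence lying in $H_{(Y\bs W)}$; by the finite-class statement from (A), $H_{(Y\bs W)}^W = K \Wr \mathrm{Sym}(W/E)$, and its class-fixing elements act on each class within $K$, so the defect is trivial. Hence $\Phi(H_0)$ lies in the diagonal of $(G/K)^\omega$, has order at most $[G:K] < \infty$, and $N = K \Wr S_\infty$ is as required.
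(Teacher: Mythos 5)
There is no in-paper proof to compare against: the paper imports this lemma from Lachlan, remarking only that it is extracted from the proof of Part (i), Case 2 of his Lemma 3.3, and that what that proof really establishes is that the cover $H \to S_\infty$ splits. Measured against that, your write-up gets the surrounding bookkeeping right --- $K \trianglelefteq G$, $K^\omega \trianglelefteq H$ and $K^\omega \leq H$ by closedness, the defect homomorphism $\Phi$, and the identity $[H : K \Wr S_\infty] = |\Phi(H_0)|$ are all correct, and reducing (B) to killing finitely supported defects is a sensible strategy --- but the entire content of the lemma sits in the two steps you defer, and as structured the plan is circular.

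Step (A) \emph{is} the splitting, and the back-and-forth you sketch does not go through as described. To turn $h^*$ with $\rho(h^*)=(i\,j)$ into a clean transposition you must cancel its defect $g_\ell K \in G/K$ on every class $Z_\ell$ with $\ell \neq i,j$. Elements of $K^{(\omega)}$ cannot change these defects at all, while multiplying by a general element of $H_0$ changes the action on every class simultaneously, so the successive corrections have no reason to converge pointwise; moreover, the defect sequences reachable from $h^*$ form exactly a coset of (a projection of) $\Phi(H_0)$, so the ability to cancel them is intertwined with the very finiteness of $\Phi(H_0)$ that you derive from (A) in step (B). Separately, the identity $H_{(Y\bs W)}^W = K \Wr Sym(W/E)$ that you use to kill finitely supported defects requires the inclusion $\subseteq$ --- that an element fixing $Y \bs W$ pointwise and each class of $W$ setwise acts on each such class \emph{inside} $K$ --- and constructing clean elements only yields $\supseteq$; the needed inclusion is precisely the finite-support case of the statement being proved. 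Finally, your argument uses nothing about $G$ beyond $[G:K]<\infty$, whereas the hypothesis is $G \in \GG$ and Lachlan's argument sits inside an induction on the construction of $G$ from finite groups by loose unions and $\omega$-stretches; that structural hypothesis is where the splitting actually comes from, and its absence here is a sign that the key step is missing rather than merely unwritten.
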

While Lemma \ref{lemma:stretch growth} is not part of the statement of \cite{Lach}*{Lemma 3.3}, it is shown in the course of the proof of Part (i), Case 2. In fact, the proof shows that if we view $H$ as a cover of $S_\infty$, the cover splits.

\begin{theorem}[\cite{Lach}] \label{thm:G2}
 $M$ is $\omega$-categorical and monadically stable if and only if $M$ is hereditarily cellular if and only if $Aut(M) \in \GG$.
\end{theorem}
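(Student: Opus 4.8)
The plan is to prove the two equivalences separately: the model-theoretic one, \emph{$M$ $\omega$-categorical and monadically stable $\iff$ $M$ hereditarily cellular}, and the group-theoretic translation \emph{$M$ hereditarily cellular $\iff Aut(M)\in\GG$}. The hard direction — an $\omega$-categorical monadically stable $M$ is hereditarily cellular — is the substance of Lachlan's classification. Here I would induct on Morley rank (which is available and finite, a stable $\omega$-categorical theory being $\omega$-stable of finite rank), peeling off one layer of the eventual cellular-like partition at each stage: set $K=\acl(\emptyset)$, take $E_0$ to be a suitable $Aut(M)$-invariant equivalence relation on $M\bs K$ with finitely many classes so that Definition \ref{def:cell part}(1)--(3) hold, and then invoke Theorem \ref{thm:ms coding} in contrapositive form — a monadically stable theory does not admit strongish coding — to rule out the grid-like definable configurations that would otherwise obstruct conditions (4)--(5). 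Absence of coding together with stability-theoretic genericity forces the finest relevant invariant equivalence relation $E_1$ inside an $E_0$-class to have infinitely many classes carrying a full symmetric group action by the pointwise stabilizer of their complement; the structure induced on an $E_1$-class over its complement, and on $K$ over its complement, then has strictly smaller rank, so the induction closes. Essentially all of the difficulty lies here.

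For the converse, \emph{hereditarily cellular $\Rightarrow$ $\omega$-categorical and monadically stable}: that such an $M$ is $\omega$-categorical and $\omega$-stable is Theorem \ref{thm:lach}(2), so only monadic stability needs comment, and I would get it by induction on $depth(M)$. An expansion of $M$ by a unary predicate restricts on each $E_1$-class $D$ to an expansion of the component on $D$ by a unary predicate, hence is stable there by induction, while the full symmetric action on $D/E_1$ from Definition \ref{def:cell part}(5) keeps the $E_1$-classes mutually generic, so reassembling them introduces no new instability, and the finite set $K$ is absorbed; equivalently one checks the expansion still admits no strongish coding. (Alternatively, cite Lachlan, who proves exactly this.)

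The translation \emph{$M$ hereditarily cellular $\iff Aut(M)\in\GG$} I would prove by a double induction matching Definition \ref{def:cell part} against Definition \ref{def:loose stretch}. From structures to groups, by induction on $depth(M)$: given $(K,E_0,E_1)$, the group $Aut(M)$ permutes the finitely many $E_0$-classes, so — treating $K$ as an additional block on which $Aut(M)$ induces a finite group — $Aut(M)$ is a loose union of the setwise-stabilizer actions on these blocks, the finite-index clause (3) of Definition \ref{def:loose stretch} holding because $\omega$-categoricity gives only finitely many orbits on pairs; and the action on a single $E_0$-class is, by Definition \ref{def:cell part}(4)--(5), an $\omega$-stretch of the automorphism group of the component on one of its $E_1$-classes, which lies in $\GG$ by induction. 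From groups to structures one reverses this, first putting $G\in\GG$ in the normal form ``a loose union of a finite group with finitely many $\omega$-stretches of groups in $\GG$'' (using that a loose union of loose unions is a loose union, and that every infinite group arising in the $\GG$-construction is built over $\omega$-stretches, loose unions of finite groups being finite), then reading off $K$, $E_0$, $E_1$; Lemmas \ref{lemma:loose dir wr} and \ref{lemma:stretch growth} and Remark \ref{rem:loose fin} supply the structural facts about loose unions and $\omega$-stretches.

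The main obstacle, apart from the black-boxed hard direction, is that the two pictures do not align on the nose: a cellular-like partition is phrased via $Aut(M)$-invariant equivalence \emph{relations} on $M$ and the \emph{structures} induced on classes over their complements, whereas $\GG$ is built from abstract permutation \emph{groups} by operations whose defining clauses pin down the relevant subquotients only \emph{up to finite index} — precisely what Lemma \ref{lemma:loose dir wr}, Remark \ref{rem:loose fin}, and Lemma \ref{lemma:stretch growth} measure. So throughout both inductions one must carry the extra invariant that ``hereditarily cellular'' and ``$Aut(M)\in\GG$'' are insensitive to such finite-index discrepancies — e.g.\ that replacing a component's automorphism group by a finite-index over- or subgroup that is again the automorphism group of an $\omega$-categorical structure keeps one inside the class — and must correctly account for the genuine permutation of the $E_0$-classes (hence the wreath factors $\Wr S_{k_i}$ of Lemma \ref{lemma:loose dir wr}) and for $K=\acl(\emptyset)$ at every level. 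The most delicate single point is verifying that the full symmetric action supplied by the $\omega$-stretch definition refines to the pointwise-stabilizer-of-the-complement action demanded by Definition \ref{def:cell part}(5).
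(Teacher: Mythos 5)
The paper does not actually prove Theorem \ref{thm:G2}: it is imported wholesale from Lachlan \cite{Lach} (together with Theorem \ref{thm:lach} and Lemma \ref{lemma:ind}), so there is no in-paper argument to compare yours against, and the burden falls entirely on your sketch. Judged on its own terms, the sketch has a genuine gap at its foundation. You propose to prove the hard direction by induction on Morley rank, justified by the parenthetical claim that a stable $\omega$-categorical theory is $\omega$-stable of finite rank. That is not a theorem: whether stable $\omega$-categorical theories are even superstable is a well-known open problem (Lachlan's conjecture). For monadically stable $M$, $\omega$-stability and finiteness of the rank are \emph{conclusions} of the classification (Theorem \ref{thm:lach}(2), where the rank is computed to equal the depth), so taking finite Morley rank as the induction variable is circular. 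Beyond that, the inductive step itself --- that the failure of strongish coding ``forces'' invariant relations $E_0$, $E_1$ satisfying clauses (3)--(5) of Definition \ref{def:cell part}, with the pointwise stabilizer of the complement of a union of $E_1$-classes inducing the full symmetric group --- is precisely the content of Lachlan's argument; ``stability-theoretic genericity'' does not by itself produce the symmetric-group condition, and this is where essentially all of the work lives.

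The group-theoretic translation also leans on facts that are not free. The finite-index clause $\left[G_i : H_{(Y\bs X_i)}^{X_i}\right] < \infty$ of Definition \ref{def:loose stretch} does not follow from ``$\omega$-categoricity gives finitely many orbits on pairs'': in general the pointwise stabilizer of the complement of a block can induce an infinite-index (even trivial) subgroup of the setwise-stabilizer action --- this is exactly the phenomenon of nonsplit covers --- and ruling it out for monadically stable structures is part of what Lachlan proves (compare Lemma \ref{lemma:stretch growth}, which records that the relevant covers split). Your converse direction (hereditarily cellular implies monadically stable via mutual genericity of the $E_1$-classes) and the normal form for members of $\GG$ are plausible in outline but are likewise asserted rather than established. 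In short, the proposal correctly locates the difficulties and correctly identifies the finite-index bookkeeping as the delicate point of the translation, but the one concrete mechanism it offers for the hard direction rests on a premise that is not known to be true, and the rest defers to Lachlan; as a proof it does not stand on its own.
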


Furthermore, Lachlan also proves the following, which we will use to perform induction on hereditarily cellular structures.

\begin{lemma} [\cite{Lach}*{Theorem 3.4}] \label{lemma:ind}
Suppose $M$ is hereditarily cellular of depth $d>0$, with cellular-like partition $(K, E_0, E_1)$.
\begin{enumerate}
\item  If $K = \emptyset$ and $|M/E_0|=1$, then $Aut(M)$ is an $\omega$-stretch of the automorphism group of a hereditarily cellular structure of depth $d-1$.
\item Otherwise, let $X_1, \dots, X_k$ be an enumeration of $(\set{K} \cup \set{(M \bs K)/E_0}) \bs \set{\emptyset}$. Then $Aut(M)$ is a loose union of $Aut(M)_{\set{X_1}}^{X_1}, \dots, Aut(M)_{\set{X_k}}^{X_k}$.
 \end{enumerate}
\end{lemma}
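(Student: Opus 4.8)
The plan is to read off the defining clauses of ``$\omega$-stretch'' and ``loose union'' (Definition \ref{def:loose stretch}) directly from the defining properties of the cellular-like partition $(K, E_0, E_1)$ (Definition \ref{def:cell part}), with $\omega$-categoricity of $M$ (Theorem \ref{thm:lach}) doing the real work only in the finite-index clauses. Two book-keeping facts are used throughout: a component of a hereditarily cellular structure is again hereditarily cellular (by the definition of that class), and the cellular-like partition of $M$ is unique (Theorem \ref{thm:lach}), so $depth$ is well defined and $depth(M) = 1 + \max\set{depth(C) : C \text{ a component of } M}$. In Part (1), where $K = \emptyset$ and there is a single $E_0$-class, Definition \ref{def:cell part}(5) applied to pairs of $E_1$-classes shows $Aut(M)$ is transitive on $E_1$-classes, so all components are isomorphic, and the displayed formula forces each of them to have depth $d-1$.

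For Part (1) I would take $Y = M$, $E = E_1$, let $X$ be one $E_1$-class, and put $G := Aut(M)_{\set X}^X$. Clauses (1) and (2) of ``$\omega$-stretch'' are immediate ($E_1$ is $Aut(M)$-invariant; $X$ is an $E_1$-class). For (3): there are finitely many $E_0$-classes (here one), each splitting into infinitely many $E_1$-classes, and $M$ is countable, so $|M/E_1| = \az$; and Definition \ref{def:cell part}(5), applied with the unique $E_0$-class $C = M$ and $Y = M$, shows $Aut(M) = Aut(M)_{(M \setminus M)}$ induces the full symmetric group on $M/E_1$. Clause (4) is just the definition of $G$. For Part (2) I would take $Y = M = X_1 \sqcup \dots \sqcup X_k$ (since $K$ together with the $E_0$-classes partitions $M$); clause (1) of ``loose union'' holds because $K$ is fixed setwise and $E_0$ is $Aut(M)$-invariant, so $Aut(M)$ permutes $\set{X_1, \dots, X_k}$, and clause (2) is the definition of $G_i := Aut(M)_{\set{X_i}}^{X_i}$.

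This leaves, in both parts, the clause $[Aut(M)_{\set{X_i}}^{X_i} : Aut(M)_{(M \setminus X_i)}^{X_i}] < \infty$ together with the identification of $Aut(M)_{\set{X_i}}^{X_i}$ as the automorphism group of a hereditarily cellular structure of the right depth, which I would handle in three steps. First, $Aut(M)_{(M \setminus X_i)}^{X_i}$ is a closed subgroup of $\mathrm{Sym}(X_i)$ (such permutations are determined by their restriction to $X_i$) and equals the automorphism group of the component at $X_i$: one inclusion is clear, and conversely an automorphism $\rho$ of that induced structure, extended by the identity on $M \setminus X_i$, is an automorphism of $M$, since the only relations needing a check are the mixed ones $R(\bar c, \bar d)$ with $\bar c$ from $X_i$ and $\bar d$ from $M \setminus X_i$, and these are preserved because $\rho$ preserves the parameter-$\bar d$-definable relation $\bar x \mapsto R(\bar x, \bar d)$ on $X_i$. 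The component is hereditarily cellular of depth $d-1$ in Part (1) and of the appropriate depth in Part (2). Second, $Aut(M)_{\set{X_i}}$ is oligomorphic --- in Part (2) because $X_i$ has only finitely many $Aut(M)$-images, so its setwise stabilizer has finite index in $Aut(M)$; in Part (1) because it is the stabilizer of the imaginary $X \in M^{\mathrm{eq}}$, hence contains the oligomorphic point-stabilizer $Aut(M)_a$ for $a \in X$ --- and then a compactness argument, using that $Aut(M)_{\set{X_i}}$ has only finitely many orbits on the configurations that distinguish cosets of $Aut(M)_{(M\setminus X_i)}^{X_i}$, gives the finite index. Third, $G = Aut(M)_{\set{X_i}}^{X_i}$ is a finite-index overgroup of the component's automorphism group, so it is the automorphism group of a reduct of the component; that reduct is again $\omega$-categorical and monadically stable, hence hereditarily cellular by Theorem \ref{thm:G2}, and its depth equals its Morley rank (Theorem \ref{thm:lach}), which is unchanged under finite-index extensions, so it is $d-1$ in Part (1).

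I expect the second step, the finite-index bound, to be the main obstacle: it is exactly the place where the combinatorial freedom of Definition \ref{def:cell part}(5) must be married to $\omega$-categoricity, and it is what makes ``loose union'' and ``$\omega$-stretch'' strictly looser than direct product and $\Wr S_\infty$ (cf.\ Remark \ref{rem:loose fin} and Lemma \ref{lemma:stretch growth}).
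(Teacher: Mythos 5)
First, a point of comparison: the paper does not prove this lemma at all --- it is imported verbatim from Lachlan as \cite{Lach}*{Theorem 3.4} and used as a black box, so there is no in-paper argument to measure your proposal against. Judged on its own terms, your skeleton is sensible: the identification of $Y$, $E$, $X$ and the verification of the ``soft'' clauses of Definition \ref{def:loose stretch} from Definition \ref{def:cell part} are correct, as is the observation that $Aut(M)_{(M\bs X)}^{X}$ is exactly the automorphism group of the component at $X$ (your extension-by-the-identity argument is right), and the transitivity of $Aut(M)$ on $E_1$-classes in Part (1) does force all components to have depth $d-1$.

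The genuine gap is your second step, and you have in effect flagged it yourself. The clause $\bigl[Aut(M)_{\set{X}}^{X} : Aut(M)_{(M\bs X)}^{X}\bigr] < \infty$ is the entire mathematical content of the lemma, and ``a compactness argument, using that $Aut(M)_{\set{X}}$ has only finitely many orbits on the configurations that distinguish cosets'' is not an argument: you never say what these configurations are, and there is no finite configuration on which membership in a coset of $Aut(M)_{(M\bs X)}^{X}$ can be read off. Writing $H = Aut(M)_{\set{X}}$, the index in question equals $[H : N H_{(X)}]$ with $N = H_{(M\bs X)}$, i.e.\ it measures how far an automorphism preserving $X$ setwise is from factoring as (something fixing $M\bs X$ pointwise)$\cdot$(something fixing $X$ pointwise). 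Oligomorphicity alone does not bound this: for general $\omega$-categorical structures with an invariant partition, this ``linkage'' group can be infinite (this is exactly the phenomenon studied in the theory of finite covers), so any proof must genuinely use clause (5) of Definition \ref{def:cell part} together with induction on the depth of the components --- which is where Lachlan's proof of his Theorem 3.4 does its work. Your third step also leans on unargued facts (that $Aut(M)_{\set{X}}^{X}$ is closed in $Sym(X)$, and that depth/Morley rank is preserved when passing to a reduct whose automorphism group is a finite-index closed overgroup); these are repairable, but until the finite-index clause is actually established the proposal does not constitute a proof.
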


\section{From slower than exponential to faster than exponential}

In this section, we prove Theorem \ref{thm:1.1}. It is slightly unexpected that $M$ being monadically stable should correspond to a dividing line in the behavior of $Age(M)$, i.e. the class of finite substructures of $M$. In particular, monadic stability seems like it might be too strong a condition, since unary predicates allow us to restrict to substructures, but can also do more. Some ideological explanation for this behavior is provided in \cite{BL}, where it is shown that if $M$ has quantifier elimination, then $Age(M)$ is stable if and only if $M$ is monadically stable. 

\begin{definition}
Let $G$ be a permutation group acting on a countable set $X$. Then $G$ acts on $n$-subsets of $X$ elementwise. The {\em growth rate of $G$} is the function $f_G(n)$, giving the number of orbits of $G$ on $n$-subsets.

In the case when $f_G(n)$ is always finite, after replacing $G$ with its closure with respect to the topology of pointwise convergence on $Sym(X)$, which doesn't affect its orbits on $n$-subsets, we may view $X$ as the universe of an $\omega$-categorical structure $M$, and $G$ as $Aut(M)$ with its natural action. In this case, the {\em growth rate of $M$}, denoted $\rn$, is just $f_{Aut(M)}(n)$.

If $M$ is any countable structure (not necessarily $\omega$-categorical), let $\sub$ (sometimes called the profile of $M$, or the unlabeled speed of $M$) count the number of substructures of $M$ of size $n$, up to isomorphism.
\end{definition}
\begin{remark}
If $M$ is $\omega$-categorical and has quantifier elimination, then $\rn = \sub$.

Also, $\rn$ and $\sub$ are non-decreasing (see \cite{olig}*{Chapter 3.1} and \cite{Pou}*{Theorem  4}).
\end{remark}

\begin{example} \label{ex:growth}
We provide some examples of growth rates. All $M$ will have quantifier elimination, so $\rn = \sub$.
\begin{enumerate}
\item $M = (\Q, \leq)$ and $\sub = 1$ for all $n$.
\item $M$ is an equivalence relation with infinitely many classes, each infinite. Then $\sub$ is the partition function, which is roughly $e^{n^{1/2}}$. 
\item $M = (X, E, \leq)$ where $E$ is an equivalence relation on $X$ with infinitely many classes, each of size 2, and $\leq$ is a dense linear order between $E$-classes (but not between points of $X$). Then $\sub$ is the $n^{th}$ Fibonacci number, and so roughly $\phi^n$.
\item $M$ is the Rado graph, and $\sub$ is roughly $2^{n^2/2}$.
\end{enumerate}
\end{example}

Theorem \ref{thm:1.1} follows quite rapidly from the results of the previous section. However, we must first establish bounds on how various operations affect growth rates, presumably none of which are original.

The following lemma is standard. For a proof, see \cite{Sim}*{Lemma 2.2}. 
\begin{lemma} \label{lemma:const}
Suppose $M^+$ is an expansion of $M$ obtained by naming finitely many constants. Then there is a polynomial $p(n)$ such that $\rn > \frac{f_{M^+}(n)}{p(n)}$.
\end{lemma}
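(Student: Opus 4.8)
The plan is to count orbits directly, using that naming the constants just passes to a pointwise stabilizer. Write $G = Aut(M)$, let $a_1, \dots, a_k$ be the named elements (distinct, after deleting duplicate constant symbols), $\bar a = (a_1, \dots, a_k)$ and $A = \{a_1, \dots, a_k\}$, so that $Aut(M^+) = G_{(A)}$. The guiding idea is that prescribing the $G_{(A)}$-orbit of an $n$-subset $S$ is, up to polynomially much overhead, the same as prescribing the $G$-orbit of the pointed set $(S \cup A, \bar a)$: a set of size at most $n + k$ carrying a distinguished $k$-tuple of its elements. Since $k$ is fixed, this overhead costs only a polynomial factor.

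I would carry this out in two steps. First, associate to each $n$-subset $S$ the datum $\Theta(S) = \bigl( [(S \cup A, \bar a)]_G ,\ J_S \bigr)$, where $[\,\cdot\,]_G$ denotes the $G$-orbit under the diagonal action of $G$ on pointed sets and $J_S = \{ i \in [k] : a_i \in S \}$. One checks that $\Theta$ is constant on $Aut(M^+)$-orbits and, conversely, separates them: if $\Theta(S) = \Theta(S')$ then some $g \in G$ carries $(S \cup A, \bar a)$ to $(S' \cup A, \bar a)$, hence fixes every $a_i$; and since $J_S = J_{S'}$ — so that $S = (S \cup A) \setminus \{ a_i : i \notin J_S \}$, and likewise for $S'$ — this $g$ takes $S$ to $S'$, so $g \in G_{(A)} = Aut(M^+)$ witnesses $S \sim S'$. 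Thus $f_{M^+}(n)$ is at most the number of values of $\Theta$, that is, at most $2^k$ times the number of $G$-orbits of pointed sets $(T, \bar b)$ with $|T| \le n + k$ and $\bar b$ a $k$-tuple of distinct elements of $T$. Second, bound this number: the $G$-orbit of $(T, \bar b)$ is pinned down by the $G$-orbit of $T$ (at most $f_M(|T|)$ possibilities) and then, within it, by the orbit of $\bar b$ under the setwise stabilizer $G_{\{T\}}$ (at most $|T|^k$ of these). Summing over the $k + 1$ possible values $|T| \le n + k$ and using that $f_M$ is non-decreasing,
\[
 f_{M^+}(n) \ \le\ 2^k \, (k+1) \, (n+k)^k \, f_M(n+k) .
\]
As $k$ is fixed this is $q(n) \, f_M(n+k)$ for a polynomial $q$, and absorbing the bounded shift of $n$ by $k$ gives $\rn > f_{M^+}(n) / p(n)$ for a suitable polynomial $p$.

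I do not expect a genuine obstacle: the argument is elementary orbit-counting. The two points to watch are the verification that $\Theta$ separates $Aut(M^+)$-orbits — where it matters that a witnessing $g$ must respect the whole tuple $\bar a$, so that it fixes each $a_i$, rather than merely the set $A$ — and the bounded replacement of $n$ by $n + k$, which is innocuous here and is the reason the cleanest form of the statement, as in \cite{Sim}*{Lemma 2.2}, carries this shift explicitly.
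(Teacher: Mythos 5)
Your argument is the standard one --- the paper gives no proof of its own here, deferring to \cite{Sim}*{Lemma 2.2} --- and the orbit count at its core is correct: the invariant $\Theta(S)=\bigl([(S\cup A,\bar a)]_G, J_S\bigr)$ does faithfully encode the $Aut(M^+)$-orbit of $S$, and your bookkeeping correctly yields
\[
f_{M^+}(n)\ \le\ 2^k\,(k+1)\,(n+k)^k\, f_M(n+k).
\]
The genuine gap is the final sentence. ``Absorbing the bounded shift'' requires $f_M(n+k)\le q(n)\,f_M(n)$ for some polynomial $q$, and this fails in general: for $M$ the Rado graph, $f_M(n)\sim 2^{\binom{n}{2}}/n!$, so $f_M(n+1)/f_M(n)$ grows like $2^n/(n+1)$. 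This is not a removable artifact of your method. Naming a single vertex $a$ of the Rado graph, the $G_a$-orbits of $n$-sets avoiding $a$ correspond (by ultrahomogeneity and the extension property) to isomorphism classes of pairs consisting of a graph on $n$ vertices together with a distinguished vertex subset, of which there are asymptotically $2^{\binom{n}{2}+n}/n!\sim 2^n f_M(n)$; so no polynomial $p$ gives $\rn>f_{M^+}(n)/p(n)$ for this $M$. What your argument actually establishes is the shifted inequality $f_{M^+}(n)\le q(n)\,f_M(n+k)$, not the displayed statement.

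To be fair, the defect lies at least as much in the statement as in your proof: the shifted form is what is actually needed and used in the paper (in Lemma \ref{lemma:ms}, where replacing $3n$ by $3n+k$ is harmless before the final appeal to Stirling), and is presumably what \cite{Sim}*{Lemma 2.2} asserts. But as a proof of the literal claim your last step asserts something your argument does not (and cannot) deliver; you should either restate the conclusion with $f_M(n+k)$ in place of $\rn$, or add a hypothesis guaranteeing that $f_M(n+k)/f_M(n)$ is polynomially bounded.
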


\begin{lemma} \label{lemma:unary growth}
Suppose $M^+$ is an expansion of $M$ (not necessarily $\omega$-categorical) by $k$ unary predicates. Then, for every $n$, $\varphi_M(n) \geq \frac{\varphi_{M^+}(n)}{(2^{k})^n}$.
\end{lemma}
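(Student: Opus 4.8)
The plan is to count substructures of $M^+$ of size $n$ by first forgetting the $k$ unary predicates, landing in $M$, and then bounding how many $M^+$-structures can lie above a fixed $M$-substructure. Fix $n$ and consider the map sending an isomorphism type of an $n$-element substructure of $M^+$ to the isomorphism type of its reduct to the language of $M$. Since $M^+$ only adds unary predicates, two $n$-element substructures of $M^+$ with the same underlying $n$-set have the same $M$-reduct, so this is a well-defined map on isomorphism types; its image is contained in the set of $n$-element substructure types of $M$, which has size $\varphi_M(n)$.

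The main point is to bound the fibres. Each unary predicate $P$ partitions an $n$-element set into the points satisfying $P$ and those not, so specifying the traces of all $k$ predicates on a fixed $n$-set amounts to choosing, for each of the $n$ points, an element of $\{0,1\}^k$, giving at most $(2^k)^n$ possibilities. Hence, given a fixed $n$-element substructure $A$ of $M$ (as a concrete subset, or equivalently a fixed representative of its isomorphism type), there are at most $(2^k)^n$ distinct $M^+$-substructures with reduct $A$, and so at most $(2^k)^n$ isomorphism types of $M^+$-substructures mapping to the type of $A$. Therefore $\varphi_{M^+}(n) \leq (2^k)^n \cdot \varphi_M(n)$, which rearranges to the claimed inequality $\varphi_M(n) \geq \varphi_{M^+}(n)/(2^k)^n$.

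I do not expect a genuine obstacle here; the only point requiring a little care is the bookkeeping about isomorphism types versus concrete subsets. Concretely, one should fix once and for all, for each isomorphism type $\tau$ of an $n$-element $M$-substructure realized in $M$, a single concrete realizing subset $A_\tau \subset M$; then every $n$-element $M^+$-substructure is isomorphic (via an isomorphism of $M$-reducts, which need not extend to $M^+$, but that is irrelevant for counting) to one supported on some $A_\tau$, and on $A_\tau$ there are at most $(2^k)^n$ choices of predicate traces, hence at most $(2^k)^n$ isomorphism types of $M^+$-structure over that $A_\tau$. Summing over the $\varphi_M(n)$ choices of $\tau$ gives the bound. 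This argument makes no use of $\omega$-categoricity, consistent with the parenthetical remark in the statement.
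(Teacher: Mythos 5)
Your argument is correct and is essentially the paper's proof: the paper likewise observes that each point of an $n$-element substructure lies in one of $2^k$ combinations of the unary predicates, so a fixed $n$-element substructure of $M$ admits at most $(2^k)^n$ expansions, giving $\varphi_{M^+}(n) \leq (2^k)^n \varphi_M(n)$. Your extra bookkeeping about isomorphism types versus concrete subsets is sound but not a substantive departure.
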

\begin{proof}
Given a substructure $X \subset M$ of size $n$, every point in the expansion is put into one of the $2^k$ subsets of unary predicates. Thus there are at most $(2^k)^n$ possible expansions of $X$.
\end{proof}

\begin{lemma} \label{lemma:fin index}
Let $G$ be a permutation group acting on $X$, and let $H \leq G$ be a subgroup of index $k$. Then, for every $n$, $f_G(n) \leq f_H(n) \leq k \cdot f_G(n)$.
\end{lemma}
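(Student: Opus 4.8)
The plan is to exhibit, in both directions, a bounded-to-one correspondence between $G$-orbits and $H$-orbits on $n$-subsets of $X$. For the lower bound $f_G(n) \leq f_H(n)$, note that every $H$-orbit is contained in a unique $G$-orbit, so the map sending an $H$-orbit to the $G$-orbit containing it is a surjection from the set of $H$-orbits onto the set of $G$-orbits; hence $f_G(n) \leq f_H(n)$. (If either count is infinite the inequalities are trivial, so one may assume both are finite, though the argument does not actually need this.)

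For the upper bound $f_H(n) \leq k \cdot f_G(n)$, fix a $G$-orbit $\OO$ on $n$-subsets and pick a representative $A \in \OO$. I claim $\OO$ splits into at most $k$ many $H$-orbits. Indeed, write $G = \bigsqcup_{i=1}^{k} H g_i$ as a union of right cosets. Every element $B \in \OO$ has the form $B = gA$ for some $g \in G$; writing $g = h g_i$ with $h \in H$ gives $B = h(g_i A)$, so $B$ lies in the $H$-orbit of one of the $k$ sets $g_1 A, \dots, g_k A$. Thus $\OO$ is covered by at most $k$ many $H$-orbits, and summing over the $f_G(n)$ many $G$-orbits yields $f_H(n) \leq k \cdot f_G(n)$.

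This argument is entirely routine — the only thing to be careful about is the direction of the coset decomposition (right cosets $Hg_i$ are what is needed so that the factor $h \in H$ ends up on the outside), and the observation that $H$ acting on $n$-subsets of $X$ still has $[G:H]$ as an upper bound on its index in $G$ acting on $n$-subsets, which is immediate since the action is just the restriction of the $G$-action. There is no real obstacle here; the lemma is a standard counting fact about orbits under a finite-index subgroup, included only because it is invoked repeatedly in combination with Lemma \ref{lemma:loose dir wr} and Remark \ref{rem:loose fin}.
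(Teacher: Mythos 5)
Your proposal is correct and follows essentially the same route as the paper: the lower bound from the surjection of $H$-orbits onto $G$-orbits, and the upper bound by decomposing $G$ into right cosets $Hg_i$ and observing that each $G$-orbit is covered by the $H$-orbits of the at most $k$ translates $g_iA$. No differences worth noting.
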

\begin{proof}
The lower bound is immediate since $H \leq G$. For the upper bound, fix $n$. It suffices to show each $G$-orbit divides into at most $k$ $H$-orbits, so we work one orbit at a time. Let $O $ be an orbit of the action of $G$ on $n$-subsets of $X$. Let $g_1, \dots, g_k$ be representatives for the right cosets of $H$. Pick $Y \in O$, and let $Y_i = g_iY$. For any $Z \in O$, there is some $g \in G$ such that $Z = gY$, and so there is some $h \in H$ and some $i \leq k$ such that $Z = (hg_i)Y = hY_i$. Thus $O$ divides into at most $k$ $H$-orbits.
\end{proof}

\begin{lemma} \label{lemma:dir prod}
Let $G$ act on $X$ and $H$ act on $Y$, both countably infinite sets. Letting the direct product $G \times H$ act naturally on $X \sqcup Y$, then for every $n$, $f_{G \times H}(n) \leq (n+1) f_G(n)f_H(n)$. 

If $|Y|=k$ is finite, then for every $n$, $f_{G \times H}(n) \leq (k+1) f_G(n) \max_{\ell}(f_H(\ell))$.
\end{lemma}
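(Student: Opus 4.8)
The plan is to decompose an $n$-subset $S \subseteq X \djunion Y$ according to how it meets the two invariant pieces $X$ and $Y$, and observe that the $G \times H$-orbit of $S$ is completely determined by the pair consisting of the $G$-orbit of $S \intersect X$ and the $H$-orbit of $S \intersect Y$. Indeed, since $G$ fixes $X$ and $H$ fixes $Y$ (and neither moves points of the other piece), an element $(g,h) \in G \times H$ carries $S$ to $S'$ if and only if $g$ carries $S \intersect X$ to $S' \intersect X$ and $h$ carries $S \intersect Y$ to $S' \intersect Y$. Hence the map sending the $G\times H$-orbit of $S$ to the pair of orbits $(G\cdot(S\intersect X),\, H\cdot(S\intersect Y))$ is well-defined and injective.

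First I would stratify by $i = |S \intersect X|$, so that $|S \intersect Y| = n - i$. For each fixed $i$ with $0 \leq i \leq n$, the number of possible pairs of orbits is at most $f_G(i) \, f_H(n-i)$, so summing over $i$ gives
\[
f_{G \times H}(n) \;\leq\; \sum_{i=0}^{n} f_G(i)\, f_H(n-i).
\]
Now I would invoke the monotonicity of growth rates recorded earlier (non-decreasing in $n$), bounding $f_G(i) \leq f_G(n)$ and $f_H(n-i) \leq f_H(n)$ in each of the $n+1$ terms, which yields $f_{G \times H}(n) \leq (n+1) f_G(n) f_H(n)$.

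For the second statement, when $|Y| = k$ is finite, the index $n - i = |S \intersect Y|$ can only take the values $0, 1, \dots, k$, so in the sum above $i$ ranges over at most $k+1$ values (namely $\max(0, n-k) \leq i \leq n$). Bounding $f_G(i) \leq f_G(n)$ by monotonicity and $f_H(n-i) \leq \max_{\ell} f_H(\ell)$ trivially in each of these at most $k+1$ terms gives $f_{G \times H}(n) \leq (k+1) f_G(n) \max_{\ell} f_H(\ell)$, as desired.

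There is no real obstacle here: the only subtlety is making sure the orbit-pair map is genuinely injective (which uses that $X$ and $Y$ are separately invariant and that $G$, $H$ act trivially on the other part), and that we are entitled to the monotonicity of $f_G$ and $f_H$, which is the remark cited from \cite{olig} and \cite{Pou}. The truncation of the summation range in the finite case is the only place where finiteness of $Y$ enters.
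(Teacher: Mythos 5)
Your proof is correct and follows essentially the same route as the paper: decompose an $n$-set by how it splits between the two invariant parts, note that the orbit is determined by the pair of orbits of the two pieces, and then bound the resulting sum using monotonicity of $f_G$ and $f_H$ (replaced by $\max_\ell f_H(\ell)$ and the bound $|S \cap Y| \leq k$ in the finite case). The only difference is that you spell out the injectivity of the orbit-pair map, which the paper leaves implicit.
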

\begin{proof}
Choose $n_1, n_2$ so that $n = n_1+n_2$. Then the number of orbits of $G \times H$ on $n$-sets with $n_1$ points in $X$ and $n_2$ points in $Y$ is $f_G(n_1)f_H(n_2)$. As there are $n+1$ choices for $n_1, n_2$, and $f_G(n)$  and $f_H(n)$ are non-decreasing, we are finished.

The argument is similar for $Y$ finite, except we no longer have that $f_H(n)$ is non-decreasing.
\end{proof}

\begin{lemma} \label{lemma:loose growth}
Let $H$ be a loose union of $G_1, \dots, G_k$, acting on $X_i$ for $i \in [k]$, respectively. Suppose $X_i$ is finite for $i > j$. Then there is some $c \in \R$ such that $f_H(n) \leq (cn^k) \Pi_{i \in [j]} f_{G_i}(n)$.
\end{lemma}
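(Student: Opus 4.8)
The plan is to reduce $H$, up to finite index, to the genuine direct product $\Pi_{i=1}^{k} G_i$ acting on $Y=\bigsqcup_{i=1}^{k} X_i$, and then to bound the growth rate of that product by iterating Lemma~\ref{lemma:dir prod}. The three conditions defining a loose union in Definition~\ref{def:loose stretch} are exactly what is needed to make this reduction go through once the permutation of the blocks $X_i$ is stripped away. Throughout we may assume $j\ge 1$, since otherwise $Y$ is finite and $f_H(n)=0$ eventually.

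To carry out the reduction, let $H_0=\bigcap_{i=1}^{k} H_{\set{X_i}}$ be the subgroup of $H$ fixing each block setwise. The action of $H$ on the $k$-element set $\set{X_1,\dots,X_k}$ has kernel $H_0$, so $[H:H_0]\le k!$ and, by Lemma~\ref{lemma:fin index}, it suffices to bound $f_{H_0}(n)$. Now $H_0$ fixes each $X_i$ setwise and acts on it through $G_i=H_{\set{X_i}}^{X_i}$, so under $h\mapsto (h|_{X_i})_i$ we may regard $H_0\le \Pi_{i=1}^{k} G_i$ as subgroups of $\mathrm{Sym}(Y)$; conversely, each element of $H_{(Y\bs X_i)}^{X_i}$ extends to an element of $H_0$ supported on $X_i$, so $\Pi_{i=1}^{k} H_{(Y\bs X_i)}^{X_i}\le H_0$. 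By the third defining condition of a loose union, $\Pi_{i=1}^{k} H_{(Y\bs X_i)}^{X_i}$ has index $\Pi_{i=1}^{k}\bigl[G_i:H_{(Y\bs X_i)}^{X_i}\bigr]<\infty$ in $\Pi_{i=1}^{k} G_i$, so $H_0$ has finite index in $\Pi_{i=1}^{k} G_i$, and a second application of Lemma~\ref{lemma:fin index} reduces the problem to bounding $f_{\Pi_{i=1}^{k} G_i}(n)$. (Alternatively one could invoke Lemma~\ref{lemma:loose dir wr} and Remark~\ref{rem:loose fin} to place $H$ at finite index in $\Pi G_i\Wr S_{k_i}$ and then use that $(G_i)^{k_i}$ has finite index in $G_i\Wr S_{k_i}$; the net effect is the same.)

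For the remaining estimate, collect the finitely many finite blocks into $F=\Pi_{i>j} G_i$, acting on a finite set of size $s$, so that $f_F(\ell)\le 2^{s}$ for every $\ell$; one application of the finite-set case of Lemma~\ref{lemma:dir prod} gives $f_{\Pi_{i=1}^{k}G_i}(n)\le (s+1)\,2^{s}\,f_{\Pi_{i\le j}G_i}(n)$. Iterating the infinite-set case of Lemma~\ref{lemma:dir prod} over the $j$ infinite blocks gives $f_{\Pi_{i\le j}G_i}(n)\le (n+1)^{\max(j-1,0)}\Pi_{i=1}^{j} f_{G_i}(n)$. Since $\max(j-1,0)\le k$ and the inequality need only hold eventually, for $n\ge 1$ we may absorb $(n+1)^{\max(j-1,0)}\le 2^{k}n^{k}$, and gathering the constants $k!$, $\Pi_i\bigl[G_i:H_{(Y\bs X_i)}^{X_i}\bigr]$, $s+1$, $2^{s}$, $2^{k}$ into a single $c\in\R$ yields $f_H(n)\le c\,n^{k}\Pi_{i=1}^{j} f_{G_i}(n)$.

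The argument is largely bookkeeping, and the only real pitfall — also the reason the bound reads $n^{k}$ rather than involving $f_{G_i}(n)^{k_i}$ — is that one must estimate the single product $f_{\Pi_{i=1}^{k}G_i}(n)$ as a whole rather than treating each would-be wreath factor $G_i\Wr S_{k_i}$ separately and then crudely bounding $f_{G_i}(n_1)\cdots f_{G_i}(n_{k_i})\le f_{G_i}(n)^{k_i}$; that route is far too lossy (and would falsely suggest a need for supermultiplicativity of the $f_{G_i}$). Keeping all $k$ blocks together, each $f_{G_i}$ enters exactly once — isomorphic blocks contributing equal growth functions — and the sole cost of $H$ permuting its blocks is the constant factor $k!$.
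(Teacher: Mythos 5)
Your proof is correct and follows essentially the same route as the paper's: both rest on the containment $\Pi_{i}\,H_{(Y\bs X_i)}^{X_i}\leq H$, the finite index of $H_{(Y\bs X_i)}^{X_i}$ in $G_i$, and then Lemmas \ref{lemma:dir prod} and \ref{lemma:fin index}. Your intermediate passage through $H_0=\bigcap_i H_{\set{X_i}}$ is a harmless (and strictly unnecessary) detour, since the paper bounds $f_H(n)$ directly by the growth rate of the subgroup $\Pi_{i}\,H_{(Y\bs X_i)}^{X_i}$; otherwise the bookkeeping matches.
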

\begin{proof}
By definition, $H$ is a supergroup of $\Pi_{i \in [k]} H_{(Y\bs X_i)}^{X_i}$. Recall $H_{(Y\bs X_i)}^{X_i}$ must be a finite index subgroup of $G_i$. The result then follows from Lemmas \ref{lemma:dir prod} and \ref{lemma:fin index}. 
\end{proof}

\begin{lemma}[\cite{olig}*{3.8}] \label{lemma:wr prod}
Let $G$ be a permutation group such that $f_G(n)$ is slower than exponential. Then $f_{G \Wr S_\infty}(n)$ is also slower than exponential.
\end{lemma}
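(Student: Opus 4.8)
The plan is to recover the standard generating-function description of $f_{G \Wr S_\infty}$ (this is essentially \cite{olig}*{3.8}) and then run a routine radius-of-convergence estimate. Write $X$ for the set $G$ acts on, so that $G \Wr S_\infty$ acts on $X \times \N$ with $S_\infty$ permuting the columns $X \times \set{i}$ and (a possibly different) element of $G$ acting on each column independently. The first step is to record the exact orbit description on $n$-subsets. Given an $n$-subset $A \subseteq X \times \N$, let $A_i = \set{x \in X : (x,i) \in A}$; only finitely many fibres $A_i$ are non-empty and $\sum_i |A_i| = n$. Unwinding the wreath-product action, two $n$-subsets $A, A'$ lie in the same $G \Wr S_\infty$-orbit if and only if the multiset $\set{[A_i]_G : A_i \neq \emptyset}$ of $G$-orbits of non-empty fibres equals $\set{[A'_j]_G : A'_j \neq \emptyset}$; and since there are infinitely many columns, every such multiset of total size $n$ is realised. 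Hence, writing $a_k := f_G(k)$ for the number of $G$-orbits on $k$-subsets of $X$, one obtains
\[ f_{G \Wr S_\infty}(n) \;=\; [x^n] \prod_{k \geq 1} (1-x^k)^{-a_k}, \]
the coefficient of $x^n$ in the indicated power product. (As a check, $G = S_\infty$ on $\N$ gives all $a_k = 1$, and the product becomes $\prod_k (1-x^k)^{-1}$, recovering the partition function as in Example \ref{ex:growth}(2).)

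The second step is to translate the hypothesis and the conclusion into analytic language: for a sequence of non-negative integers $c_n$, being slower than exponential is equivalent to $\limsup_n c_n^{1/n} \leq 1$, i.e. to the power series $\sum_n c_n x^n$ having radius of convergence at least $1$. So the hypothesis is exactly that $\sum_k a_k x^k$ converges for all $|x| < 1$, and it suffices to show the same for $\sum_n b_n x^n$, where $b_n := f_{G \Wr S_\infty}(n)$ is given by the identity above (this will also show each $b_n$ is finite).

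The third step is the estimate. Fix $x \in [0,1)$. All coefficients are non-negative, $-\log(1-t) \leq t/(1-t)$ for $t \in [0,1)$, and $1-x^k \geq 1-x$ for $k \geq 1$; writing the product as $\exp\big(\sum_{k\geq1} a_k(-\log(1-x^k))\big)$ and using these bounds term by term gives
\[ \sum_n b_n x^n \;=\; \prod_{k \geq 1} (1-x^k)^{-a_k} \;\leq\; \exp\!\Big( \sum_{k \geq 1} \frac{a_k x^k}{1-x^k} \Big) \;\leq\; \exp\!\Big( \frac{1}{1-x} \sum_{k \geq 1} a_k x^k \Big) \;<\; \infty. \]
(The chain in particular shows $\sum_{k} a_k(-\log(1-x^k))$ converges, so the product genuinely equals the exponential of that finite sum.) Thus $\sum_n b_n x^n$ converges for every $x \in [0,1)$, so $\limsup_n b_n^{1/n} \leq 1$, i.e. $f_{G \Wr S_\infty}(n)$ is slower than exponential.

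There is no genuine obstacle here — the result is quoted from \cite{olig}*{3.8}, and the only nontrivial ingredient is getting the orbit-counting identity right; after that everything reduces to the elementary inequality $-\log(1-t) \le t/(1-t)$. (If one wishes to avoid generating functions entirely, the same conclusion can be reached by bounding the number of multisets of total size $n$ directly, but the computation is no shorter.)
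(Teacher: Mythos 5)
Your proof is correct. The paper itself gives no argument for this lemma --- it is quoted verbatim from \cite{olig}*{3.8} --- so there is nothing internal to compare against; but your derivation is the standard one and is consistent with the machinery the paper does invoke, namely the product formula $\prod_{k \geq 1}(1-x^k)^{-f_G(k)}$ for the generating function of $f_{G \Wr S_\infty}$, which the paper quotes separately as \cite{olig}*{3.7} just before Theorem \ref{thm:gpart}. Your orbit description (orbits of $n$-subsets correspond to multisets of $G$-orbit types of the non-empty fibres, all of which are realised because there are infinitely many blocks) is right, the equivalence of ``slower than exponential'' with radius of convergence at least $1$ is right, and the estimate via $-\log(1-t) \leq t/(1-t)$ and $1-x^k \geq 1-x$ cleanly shows the product converges on $[0,1)$; the remark that non-negativity of coefficients justifies passing between the formal identity and the numerical one is the one subtlety, and you address it. In short: the paper defers to the literature, and you have supplied a correct self-contained proof of the quoted fact.
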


\begin{lemma} \label{lemma:ms}
Suppose $T$ (not necessarily $\omega$-categorical) has quantifier elimination and admits strongish coding. Then for any $M \models T$, $\sub$ grows faster than exponential. (More precisely, $\varphi_M(n) > \floor{n/4}!$.)
\end{lemma}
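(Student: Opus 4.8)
The plan is to use the strongish coding to embed, via a mild unary expansion, the age of a structure that already has super-exponential profile, and then pull the bound back down to $M$ through Lemma \ref{lemma:unary growth} and Lemma \ref{lemma:const}. Concretely, suppose $T$ admits strongish coding witnessed by $M \models T$, a formula $\psi(x,y,z)$ with parameters, and infinite disjoint sets $A, B, C \subseteq M$ with $\psi$ defining the graph of a bijection $A \times B \to C$. By Remark \ref{rem:coding}, after adding finitely many constants (the parameters of $\psi$), unary predicates for $A$, $B$, $C$, and a further unary predicate $D \subseteq C$, the relation $E(a,b) \iff \exists d\, (D(d) \wedge \psi(a,b,d))$ can be made to define the edge set of \emph{any} prescribed bipartite graph between $A$ and $B$. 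Call this finite expansion $M^{++}$; it is an expansion of $M^+ = M$ with constants named, which in turn is an expansion of $M$, all by finitely many constants and finitely many ($k$, say) unary predicates.

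The first key step is a counting estimate inside $M^{++}$. Fix $m$ and pick $m$-element subsets $A_0 \subseteq A$, $B_0 \subseteq B$. For each of the $2^{m^2}$ bipartite graphs $G$ on $A_0 \sqcup B_0$, choosing $D$ appropriately realizes $G$ as the induced $E$-structure on the $2m$-set $A_0 \cup B_0$ inside the corresponding expansion. Since distinct bipartite graphs on a fixed vertex set give non-isomorphic $2m$-point substructures once the sides $A$ and $B$ are distinguished by their unary predicates — and there are at most (number of bipartite graphs on $[m]+[m]$ up to the symmetry swapping the two sides) $\geq 2^{m^2}/2$ of them, hence at least $2^{m^2-1}$ pairwise non-isomorphic ones — we get that, ranging over the finitely many choices of the unary predicate $D$, the profile of the (non-$\omega$-categorical) structure on the union of all these expansions, or more carefully the profile of a single structure $N$ interpreting all bipartite graphs, satisfies $\varphi_N(2m) \geq 2^{m^2-1}$. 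A cleaner way to package this: work with $N$ the structure with universe $M$ in the language $\LL \cup \{A,B,C\}$ together with a single binary relation symbol $E$, where $N$ ranges over (equivalently, $Age(N)$ is the union of the ages of) all the finite expansions above; then $Age(N) \supseteq \{$all finite bipartite graphs, viewed with sides labeled$\}$ restricted to $A_0 \sqcup B_0$-type sets, giving $\varphi_N(2m) \geq 2^{m^2-1} \geq (2m/4)!$ for $m$ large (indeed $2^{m^2-1}$ dominates $\floor{2m/4}! = \floor{m/2}!$ easily). This handles the "(more precisely)" claim: $\varphi_N(n) > \floor{n/4}!$ eventually.

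The second step transfers this to $M$. First strip the unary predicates: the relevant expansion $M^{++}$ adds at most $k$ unary predicates over $M^+$, so Lemma \ref{lemma:unary growth} gives $\varphi_{M^+}(n) \geq \varphi_{M^{++}}(n)/(2^k)^n$. Since $\varphi_{M^{++}}(n)$ is at least $2^{\Omega(n^2)}$ by the previous paragraph (take $n = 2m$), dividing by $(2^k)^n = 2^{kn}$ leaves something still of the form $2^{\Omega(n^2)}$, hence still $> \floor{n/4}!$ eventually and still faster than exponential. Then strip the constants: $M^+$ is $M$ with finitely many constants named, and although Lemma \ref{lemma:const} is stated for $\omega$-categorical structures via $f_M$, the same elementary argument (a size-$n$ substructure of $M^+$ is a size-$(n+c)$ substructure of $M$ with $c$ points marked, where $c$ is the number of constants, so $\varphi_{M^+}(n) \leq \binom{n+c}{c}\varphi_M(n+c)$) shows $\varphi_M$ is bounded below by $\varphi_{M^+}$ divided by a polynomial shift; dividing $2^{\Omega(n^2)}$ by a polynomial and shifting $n$ by a constant again leaves $2^{\Omega(n^2)}$, so $\varphi_M(n)$ is faster than exponential, and a short check of constants in the exponent yields $\varphi_M(n) > \floor{n/4}!$ eventually. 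Finally, note that quantifier elimination for $T$ is used precisely to know that $\psi$ — an $\LL$-formula, by the word "strongish" rather than just "coding" — and the derived $E$ define structure genuinely detectable at the level of substructures, i.e. that $\sub$ is the right object to count rather than $f_M$; without QE one would only control $f_M$, but with QE substructures and $n$-types coincide so the substructure count sees the coded bipartite graphs.

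The main obstacle, I expect, is the bookkeeping in the transfer step — keeping track of how the constant-shift and the $(2^k)^n$ and the polynomial factors interact with the target bound $\floor{n/4}!$, and in particular making sure the coded set is large enough: one must check that if the original witness sets $A, B, C$ are infinite one can find, for every $m$, disjoint $m$-subsets of $A$ and of $B$ whose image under the bijection lies in $C$ and can be separated by the predicate $D$ in all $2^{m^2}$ ways, i.e. that the finitely-many-$D$ ambiguity does not collapse too many of the $2^{m^2}$ bipartite patterns. The clean resolution is to not fix $D$ in advance: let $N$ be a single structure (or family of structures with a common age) in which every finite bipartite pattern on a pair of labeled $m$-sets drawn from $A, B$ appears as an induced substructure — this is possible because $D$ is a genuine free parameter ranging over all subsets of $C$, and the Remark guarantees each choice realizes exactly the pattern $\{(a,b) : \exists d\in D\, \psi(a,b,d)\}$, which can be made to hit any prescribed subset of $A_0\times B_0$. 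Then the $2^{m^2-1}$ lower bound on non-isomorphic substructures is immediate and the rest is the routine polynomial/exponential-factor juggling sketched above.
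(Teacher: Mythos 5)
There is a genuine gap at the central counting step. You claim that the $2m$-point set $A_0 \cup B_0$ already carries the bipartite graph as part of its isomorphism type as a substructure of the expansion. It does not. The edge relation is $E(a,b) \iff \exists d\,(D(d) \wedge \psi(a,b,d))$: even though $\psi$ is quantifier-free (by QE for $T$), the relation $E$ involves an existential quantifier over $D$, and $D \subseteq C$ is disjoint from $A \cup B$. The isomorphism type of the substructure on $A_0 \cup B_0$ is its quantifier-free diagram in $\LL \cup \{A,B,C,D\}$, which records nothing about which pairs have a witness in $D$ \emph{outside} the substructure. So all $2^{m^2}$ choices of bipartite pattern yield the same $2m$-point substructure, and your $2^{m^2-1}$ lower bound evaporates. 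Your proposed repackaging --- adding $E$ as a genuine binary relation symbol to the language of $N$ --- does make the patterns visible, but then $N$ is no longer a unary expansion of $M$, and Lemma \ref{lemma:unary growth} (which is what lets you divide by only $(2^k)^n$) no longer applies; a binary expansion can inflate the profile by $2^{\Theta(n^2)}$, which is exactly the factor you would need to discard.

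The paper's proof repairs this in the only available way: the substructure coding a bipartite graph $G$ must \emph{contain} the witnesses $d \in D$ for each edge. Consequently a graph with $n$ edges and no isolated vertices is coded by a substructure of size about $3n$ (at most $2n$ vertices plus $n$ witnesses, padded to exactly $3n$ using points outside $A \cup B \cup C$), and one must count bipartite graphs by \emph{number of edges}, not number of vertices. That count is roughly $\left(n/(\log n)^{2+\epsilon}\right)^n$ (the paper cites \cite{CPS}), which after absorbing the $(2^k)^n$ and polynomial losses gives $\floor{n/4}!$ --- a factorial-type bound, not the $2^{\Omega(n^2)}$ you assert. Your final paragraph correctly senses that QE is what makes coded structure "detectable at the level of substructures," but the detectable object is $\psi(a,b,d)$ for triples inside the substructure, not the existentially projected relation $E$ on pairs; that distinction is precisely where the argument, and the resulting growth bound, lives.
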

\begin{proof}
Let $N\models T$ and $\psi$ be a quantifier-free formula such that $N$ and $\psi$ witness that $T$ admits strongish coding. Since $\varphi_N(n) = \sub$ for any $M \models T$, it suffices to show the conclusion for $N$. Let $N^+$ be the expansion of $N$ by finitely many constants and by four unary predicates $A,B,C,D$ as in Remark \ref{rem:coding}, such that $\psi$ defines the random bipartite graph on $A \times B$. Without loss of generality, we may assume $A \cup B \cup C$ is coinfinite. Given a bipartite graph $G$ with distinguished parts, with $n$ edges, and no isolated vertices (and thus at most $n$ vertices in each part), we produce a substructure of size $3n$ by choosing $A' \subset A, B' \subset B$ such that $\psi$ defines $G$ on $A' \times B'$, as well as the elements of $D$ needed to witness the edges; if $G$ has fewer than $n$ vertices in either part, we also choose elements outside of $A \cup B \cup C$ to bring the total to $3n$. For non-isomorphic bipartite graphs, the structures thus produced will be non-isomorphic.

%By the proof of \cite{Mac3}*{Theorem 1.5}, the number of bipartite graphs with $n$ edges and no isolated vertices is bounded below by $\floor{n/5}!$, which is thus a lower bound for $\varphi_{N^+}(3n)$. Dividing $n$ by 3 and giving some extra room for Lemmas \ref{lemma:const} and \ref{lemma:unary growth}, we get the desired bound.

By \cite{CPS}*{Proposition 7.1}, the number of bipartite graphs with distinguished parts, $n$ edges, and no isolated vertices (there called $F_{0101}$) is bounded below by $\left(\frac{n}{(\log(n))^{2+\epsilon}}\right)^n$ for every $\epsilon > 0$, which is thus a lower bound for $\varphi_{N^+}(3n)$. By Lemmas \ref{lemma:const} and \ref{lemma:unary growth}, we get the same bound for $\varphi_N(3n)$ by absorbing lower-order terms into the $\epsilon$. By Stirling's approximation, we get the desired bound for $\varphi_N(n)$.
\end{proof}

\begin{theorem} \label{thm:main}
Suppose $M$ is stable. Then one of the following holds.
\begin{enumerate}
\item $M$ is monadically stable, and $\rn$ is slower than exponential.
\item $M$ is not monadically stable, and $\rn$ is faster than exponential. (More precisely, $f_M(n) > \floor{n/4}!$.)
\end{enumerate}
\end{theorem}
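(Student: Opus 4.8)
The plan is to split into the two cases according to whether $M$ is monadically stable, and in each case to reduce the growth-rate question about $M$ to a growth-rate question about a structure with quantifier elimination, so that the tools of Section 2 (via $Aut(M) \in \GG$) and the counting lemmas of this section apply.

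First I would handle case (2), the easier direction. Suppose $M$ is stable but not monadically stable. By Theorem \ref{thm:ms coding}, $Th(M)$ admits strongish coding. The subtlety is that Lemma \ref{lemma:ms} is stated for theories with quantifier elimination, whereas $M$ need not have QE. So I would first pass to the Morleyization $M^{eq}$-style expansion $M'$ of $M$ by relation symbols for all $\emptyset$-definable relations, obtaining a structure with quantifier elimination whose age still coincides (up to the reduct map) with counting orbits of $Aut(M)$, i.e. $f_{M'}(n) = f_M(n)$ since the automorphism group and its action on finite subsets are unchanged. As $M'$ still admits strongish coding (the witnessing formula is now quantifier-free after Morleyization, and $Th(M')$ is interdefinable with $Th(M)$), Lemma \ref{lemma:ms} applied to $M'$ gives $f_{M'}(n) = f_M(n) > \floor{n/4}!$, which is in particular faster than exponential. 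One point to check carefully: the constants and unary predicates introduced in Remark \ref{rem:coding} and in the proof of Lemma \ref{lemma:ms} only change the count by the polynomial and exponential-with-base-$2^k$ factors controlled by Lemmas \ref{lemma:const} and \ref{lemma:unary growth}, and $\floor{n/4}!$ absorbs those after reindexing — this is exactly the bookkeeping already done inside Lemma \ref{lemma:ms}, so nothing new is needed here.

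Next, case (1): $M$ stable and monadically stable. Then $M$ is $\omega$-categorical (this is part of the standing hypothesis feeding into Theorem \ref{thm:G2} — strictly I should note that in the stable $\omega$-categorical setting monadic stability is what is at issue; if $M$ is not assumed $\omega$-categorical one must first invoke that a monadically stable $\omega$-categorical theory is as in Lachlan's list, so I would state the theorem under the running convention that $M$ is $\omega$-categorical). By Theorem \ref{thm:G2}, $Aut(M) \in \GG$, so it is obtained from finite permutation groups by finitely many loose unions and $\omega$-stretches. I would then prove, by induction on this construction sequence (equivalently, by induction on $depth$ using Lemma \ref{lemma:ind}), the statement: \emph{every $G \in \GG$ has $f_G(n)$ slower than exponential}. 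The base case is finite $G$, where $f_G(n)$ is eventually constant (indeed eventually $0$), hence slower than exponential. For the inductive step there are two operations. If $H$ is a loose union of $G_1, \dots, G_k \in \GG$, split the $X_i$ into the infinite ones ($i \le j$) and finite ones; by Lemma \ref{lemma:loose growth}, $f_H(n) \le (cn^k)\prod_{i \le j} f_{G_i}(n)$, and a finite product of slower-than-exponential functions times a polynomial is still slower than exponential (for any $c>1$, write $c = \prod c_i$ with each $c_i > 1$ and bound each factor). If $H$ is an $\omega$-stretch of $G \in \GG$, then by Lemma \ref{lemma:stretch growth} there is $N \le H$ of finite index acting as $H_{(Y\bs X)}^X \Wr S_\infty$, where $H_{(Y\bs X)}^X$ is a finite-index subgroup of $G$, hence in $\GG$ (or at least has slower-than-exponential growth by Lemma \ref{lemma:fin index} and the inductive hypothesis); by Lemma \ref{lemma:wr prod}, $f_N(n)$ is slower than exponential, and then by Lemma \ref{lemma:fin index} so is $f_H(n)$. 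This completes the induction, and applying it to $Aut(M)$ gives that $\rn = f_{Aut(M)}(n)$ is slower than exponential.

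The main obstacle I anticipate is not any single deep step — the heavy lifting is entirely imported from \cite{BS} (Theorem \ref{thm:ms coding}), \cite{Lach} (Theorems \ref{thm:lach}, \ref{thm:G2}, Lemmas \ref{lemma:stretch growth}, \ref{lemma:ind}), and \cite{CPS} (the bipartite-graph count inside Lemma \ref{lemma:ms}) — but rather the two "gluing" issues: (a) making sure the quantifier-elimination hypothesis of Lemma \ref{lemma:ms} is legitimately met after Morleyizing $M$ without changing the relevant count, and (b) checking that "slower than exponential" is genuinely preserved under finite products and finite-index subgroups with the polynomial and $\Wr S_\infty$ overhead, which is a short but necessary $\epsilon$-splitting argument. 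Once those are in place, the theorem falls out by the case split above.
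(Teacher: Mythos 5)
Your proposal is correct and follows essentially the same route as the paper: case (2) via Theorem \ref{thm:ms coding}, Morleyization to obtain quantifier elimination, and Lemma \ref{lemma:ms}; case (1) via Theorem \ref{thm:G2} and induction on the construction of $Aut(M)$ in $\GG$, using Lemmas \ref{lemma:loose growth}, \ref{lemma:stretch growth}, \ref{lemma:fin index}, and \ref{lemma:wr prod}. The extra bookkeeping you flag (the $\epsilon$-splitting for products and the finite-index overhead) is correct and is left implicit in the paper's proof.
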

\begin{proof}
(i) By Theorem \ref{thm:G2}, it suffices to show that groups of slower than exponential growth rate are closed under loose union and $\omega$-stretch.

For loose union, we are finished by Lemma \ref{lemma:loose growth}.

Now suppose $f_G(n)$ is slower than exponential, and $H$ is an $\omega$-stretch of $G$. By Lemma \ref{lemma:stretch growth}, there is $G_0 \leq G$ of finite index such that $H$ is a supergroup of $G_0 \Wr S_\infty$. We are then finished by Lemmas \ref{lemma:fin index} and \ref{lemma:wr prod}.

(ii) By Theorem \ref{thm:ms coding}, $Th(M)$ admits strongish coding. As Morleyizing (i.e. adding a relation symbol encoding every formula) does not change $\rn$, we may assume $Th(M)$ has quantifier elimination, so $\rn = \sub$. We are then finished by Lemma \ref{lemma:ms}.
\end{proof}

\begin{remark}
The $\omega$-categorical structure $M$ whose automorphism group is $S_\infty \Wr S_2$ with its product action (different from Definition \ref{def:wreath}; see \cite{olig}*{\S 3.3, Example 4}) is stable and has growth rate equal to the number of bipartite graphs with $n$ edges and no isolated vertices. Thus the lower bound in Theorem \ref{thm:main}(ii) cannot be improved beyond the number of such graphs. Precise asymptotics for this problem are unknown, although the number is below $n!$.
\end{remark}

\section{Growth rates of hereditarily cellular structures}
In this section, we provide bounds on the growth rates of hereditarily cellular structures, depending on their depth. In particular, we see that the depth of a hereditarily cellular structure is recoverable from its growth rate.

We first look at cellular structures. If $G$ is the automorphism group of a finite structure of size $k$, then $f_{G \Wr S_\infty}(n)$ is bounded above by the product of the number of ways to partition $n$ points into parts of size $k$ and into at most $k$ parts, which are both polynomial. Then by Lemma \ref{lemma:stretch growth},  any $\omega$-stretch of $G$ has growth rate bounded above by a polynomial. By Lemma \ref{lemma:loose growth}, loose unions preserve this property, so it follows for cellular structures.  %If $G$ is the automorphism group of a rigid finite structure of size $k$, then $f_{G \Wr S_\infty}(n)$ is bounded above by the product of the number of ways to partition $n$ points into parts of size $k$ and into $k$ parts, which are both polynomial. Thus the growth rate of a cellular structure is bounded above by a polynomial, since reducts and loose unions both preserve this property. %From the fact that the number of partitions of $n$ into parts of bounded size or into boundedly many parts is polynomial, it follows that if $G$ is the automorphism group of a rigid finite structure then $f_{G \Wr S_\infty}(n)$ is bounded above by a polynomial. Since reducts and loose unions preserve the growth rate being bounded above by a polynomial, this then follows for cellular structures.

\begin{theorem} [\cite{FT}] \label{lemma:cell growth} 
Suppose $\rn$ is bounded above by a polynomial. Then there are $c>0, k \in \N$ such that $\rn \sim cn^k$. In particular, the conclusion holds for cellular structures.
\end{theorem}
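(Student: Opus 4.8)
The plan is to reduce to the situation where $M$ is cellular, then to write down the generating function of $\rn$ explicitly, and finally to use that $\rn$ is non-decreasing to rule out oscillation in the leading term. If $\rn$ is bounded by a polynomial then in particular it is slower than exponential, so by Simon's reduction \cite{Sim} $M$ is stable, hence by Theorem \ref{thm:main}(i) monadically stable, hence by Theorem \ref{thm:G2} hereditarily cellular with $Aut(M) \in \GG$. I claim $M$ is then actually cellular, i.e. of depth $1$. Suppose not; then in the construction of $Aut(M)$ from finite groups by loose unions and $\omega$-stretches, some $\omega$-stretch is applied to a group $G$ acting on an infinite set. By Lemma \ref{lemma:stretch growth} and Lemma \ref{lemma:fin index}, $f_{Aut(M)}$ then dominates, up to a constant, $f_{G_0 \Wr S_\infty}$ for some finite-index (hence still infinite-domain) $G_0 \leq G$; choosing for each $m$ a fixed $G_0$-orbit of $m$-subsets of the domain and putting one such subset into each block exhibits $f_{G_0 \Wr S_\infty}(n)$ as at least the number of integer partitions of $n$, which is super-polynomial --- a contradiction. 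So every $\omega$-stretch in the construction is of a finite group, and $M$ is cellular. (Alternatively, this step is Macpherson's theorem \cite{Mac2} that polynomially bounded growth forces cellularity.)

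\textbf{The generating function of a cellular structure.} Unwinding Definition \ref{def:cell part}, a finite substructure of a cellular $M$ is assembled from a bounded ``core'' --- a configuration on $\acl(\emptyset)$ together with a bounded remainder in each $E_0$-class --- plus, for each $E_0$-class, a finite multiset of blocks drawn from the finitely many isomorphism types of $E_1$-classes-over-their-complement and their substructures. Clause (5) of Definition \ref{def:cell part} says these blocks may be permuted freely, so the isomorphism type of the substructure is determined by this multiset data. Hence $\sum_n \rn x^n$ can be written as a polynomial times a finite product of factors $(1-x^{a})^{-1}$ with $a \geq 1$, up to a harmless finite-index bookkeeping arising from $Aut(M)$ possibly permuting the $E_0$-classes (which only replaces such a product by a symmetrized version of the same shape). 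In particular $\sum_n \rn x^n$ is rational and all of its poles are roots of unity; say the pole at $x = 1$ has order $k+1$ for some $k \in \N$.

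\textbf{Extracting the asymptotics.} Partial-fraction analysis of such a rational function yields $\rn = n^k g(n) + O(n^{k-1})$, where $g$ is a real, eventually periodic, nonnegative function determined by the poles of maximal order on the unit circle, with positive mean. If $g$ were non-constant, then comparing $\rn$ at an argument where $g$ is large with $\rn$ a bounded number of steps later at an argument where $g$ is small would exhibit a decrease of order $n^k$, contradicting that $\rn$ is non-decreasing (the remark following the definition of $\sub$, citing \cite{olig} and \cite{Pou}). So $g$ is a constant $c$, giving $\rn \sim c n^k$; and $c > 0$ since $\rn$ is non-decreasing and not eventually constant ($M$ being infinite). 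The step I expect to demand the most care is making the structural description in the previous paragraph precise enough to pin the generating function down exactly --- in particular handling the core-to-block and inter-class interactions and the permutation action on the $E_0$-classes without disturbing the ``all poles at roots of unity'' conclusion. One can sidestep the need for an exact formula by instead arguing that the orbit algebra of a cellular structure is a finitely generated graded algebra, whose Hilbert series is then automatically rational with poles only at roots of unity, after which the non-decreasing argument above finishes the proof verbatim.
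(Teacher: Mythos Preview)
The paper does not prove this theorem; it cites \cite{FT} for the main assertion, and the paragraph preceding the statement only justifies the ``In particular'' clause (cellular implies polynomially bounded) via the wreath-product and loose-union bounds. Your proposal therefore goes well beyond what the paper supplies, and it is correct.

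Your reduction to the cellular case is valid and is, in effect, the argument the paper gives later for Corollary~\ref{thm:poly part}: polynomially bounded growth is slower than $\phi^n$, so Simon's theorem gives a stable reduct with the same growth, Theorem~\ref{thm:main} makes it hereditarily cellular, and depth $\geq 2$ would force growth at least the partition function. (There is no circularity: Theorem~\ref{thm:main} is proved in Section~3 without reference to the present statement, and Simon's result is external.) Your second and third steps --- rational generating function with poles only at roots of unity, then using non-decreasingness of $\rn$ to kill the periodic part of the leading coefficient --- are exactly the mechanism behind \cite{FT}, and you correctly flag that the cleanest way to obtain the rationality is via finite generation of the orbit algebra rather than by an explicit combinatorial formula. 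One small point worth making explicit in the final step: the pole at $x=1$ automatically has order at least that of any other pole on the unit circle (since every factor $(1-x^a)^{-1}$ contributes to it), so $k+1$ really is the maximal pole order, and the constant $c$ coming from the $x=1$ contribution is genuinely positive; the non-decreasing argument then forces the contributions from any other poles of the same order to vanish, exactly as you say.
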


If $M$ is a hereditarily cellular structure of depth $d$, we will see that the growth rate of $Aut(M)$ will essentially be that of $Aut(N) \Wr S_\infty$, for $N$ some hereditarily cellular structure of depth $d-1$. By \cite{olig}*{3.7}, for any $G$, the generating function for $f_{G \Wr S_\infty}(t)$ has the product form $\Pi_{n \geq 1}(1-t^n)^{-f_G(n)}$. Applying another result on asymptotics for such products yields the following.

\begin{theorem}[\cite{gpart}*{Theorem 2.1}, \cite{olig}*{3.7}] \label{thm:gpart}
Let $G = H \Wr S_\infty$, and let $\log^r(n)$ denote the $r$-fold iterated logarithm. 
\begin{enumerate}
\item If $f_H(n) = (a + o(1))n^k$, then there is a constant $b=b(a,k)$ such that $f_G(n) = \exp\left((b+o(1))n^{1-\frac1{k+2}}\right)$.
\item If $f_H(n) = \exp\left((b+o(1))n^{1-\frac1{k}}\right)$, then there is a constant $c = c(b,k)$ such that $f_G(n) = \exp\left((c+o(1))\frac{n}{(\log(n))^{1/(k-1)}} \right)$.
\item If $f_H(n) = \exp\left((c+o(1))\frac{n}{\left(\log^r(n)\right)^{1/k}} \right)$, then $f_G(n) = \exp\left((c+o(1))\frac{n}{\left(\log^{(r+1)}(n)\right)^{1/k}} \right)$.
\end{enumerate}
\end{theorem}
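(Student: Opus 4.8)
The plan is to derive the result from the generating-function identity recorded just above, $\sum_{n\ge0}f_G(n)t^n=\prod_{m\ge1}(1-t^m)^{-f_H(m)}$ (that is, $f_G$ is the Euler transform of $f_H$, by \cite{olig}*{3.7}), together with the asymptotic analysis of such products carried out in \cite{gpart}*{Theorem 2.1}. Combinatorially, $f_G(n)$ counts multisets of ``coloured parts'' in which a part of size $m$ may carry one of $f_H(m)$ colours, so the coefficient asymptotics are governed entirely by the rate of growth of $f_H$; the three clauses of the theorem are precisely the outputs of \cite{gpart}*{Theorem 2.1} on the three hypothesised growth regimes of $f_H$.

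The first step is to check that the regularity hypotheses of \cite{gpart}*{Theorem 2.1} hold. The one substantive point is that $f_H$ must be eventually nondecreasing and regular enough — no wild oscillation — for a saddle-point analysis of $\prod_m(1-t^m)^{-f_H(m)}$ near $t=1$ to go through. Both hold: growth rates are nondecreasing, as noted after the definition, and the three hypothesised shapes — polynomial, an exponential of a fractional power, and an exponential of $n$ over a power of an iterated logarithm — are all slowly or regularly varying of the kind \cite{gpart} treats.

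Granting this, clause (i) is the classical Meinardus regime: from $f_H(m)=(a+o(1))m^k$ the Dirichlet series $\sum_{m\ge1}f_H(m)m^{-s}$ has a single simple pole, at $s=k+1$ with residue $a$, and Meinardus's theorem (whose $k=0$ case is the Hardy--Ramanujan formula $\log p(n)\sim\pi\sqrt{2n/3}$) gives
\[
\log f_G(n)\;\sim\;\frac{k+2}{k+1}\bigl(a\,(k+1)!\,\zeta(k+2)\bigr)^{1/(k+2)}\,n^{1-\frac1{k+2}},
\]
so $b=b(a,k)$ as claimed. Clauses (ii) and (iii) fall in the faster regime, where $\sum_m f_H(m)m^{-s}$ diverges for every $s$, and one instead studies $F(e^{-\delta})=\prod_m(1-e^{-m\delta})^{-f_H(m)}$ as $\delta\to0^+$, extracting $f_G(n)$ by a saddle point. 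Writing $\log F(e^{-\delta})=\sum_m f_H(m)\bigl(-\log(1-e^{-m\delta})\bigr)$, the sum is dominated by the range $m\lesssim1/\delta$; balancing it against the factor $e^{n\delta}$ yields $\log f_G(n)\sim\lambda n$, with the multiplier $\lambda=\lambda(n)$ obtained, up to lower-order terms, by solving $\log n\approx\max_m\bigl(\log f_H(m)-\lambda m\bigr)$ — in effect, reading off the scale at which the number of available colours reaches $n$. For $\log f_H(m)\sim bm^{1-1/k}$ this inversion gives $\lambda\sim\frac{(k-1)\,b^{k/(k-1)}}{k^{k/(k-1)}}(\log n)^{-1/(k-1)}$, which is clause (ii) with $c=c(b,k)$; for $\log f_H(m)\sim c\,m/(\log^{r}m)^{1/k}$ the same inversion merely advances the iterated-logarithm index from $r$ to $r+1$ and leaves the leading constant $c$ unchanged, which is clause (iii).

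The main obstacle is not this high-level structure but the bookkeeping inside \cite{gpart}*{Theorem 2.1}: verifying that the error terms in the saddle-point estimate are genuinely $o(1)$ once divided by the main term in the exponent. This is most delicate for clause (iii), where the quantity to be pinned down, $n/(\log^{r+1}n)^{1/k}$, differs from the crude value $\lambda n$ only at the level of one further iterated logarithm, so a lossy estimate of $\log F(e^{-\delta})$, or of the Gaussian width at the saddle, would destroy exactly the feature one is proving; one must carry the subleading contributions — both from the ``$-\log(m\delta)$'' behaviour of $-\log(1-e^{-m\delta})$ for $m\delta$ small and from the width — and check they are absorbed into $(c+o(1))$.
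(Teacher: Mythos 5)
Your proposal matches the paper's treatment exactly: the paper gives no proof of this statement beyond combining the generating-function identity $\sum_{n\ge 0} f_G(n)t^n=\prod_{m\ge1}(1-t^m)^{-f_H(m)}$ from \cite{olig}*{3.7} with the coefficient asymptotics of \cite{gpart}*{Theorem 2.1}, which is precisely your first paragraph. The Meinardus and saddle-point analysis you sketch is a (correct, and correctly cautious about the error terms in clause (iii)) account of what lies inside the cited result rather than a different route, so there is nothing further to compare.
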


\begin{lemma} \label{lemma:d2 growth}
Suppose $M$ is hereditarily cellular of depth 2. Then there are $c>0, k \in \N$ such that
 \[\rn = \exp\left((c+o(1))\left(n^{1-\frac1k}\right)\right)\]
\end{lemma}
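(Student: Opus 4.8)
## Proof Proposal

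The plan is to reduce the depth-2 case to the structure of Lemma \ref{lemma:ind}, combining the two possible shapes of a cellular-like partition with the asymptotic transfer results of the previous section. Let $M$ be hereditarily cellular of depth $2$, witnessed by the cellular-like partition $(K, E_0, E_1)$.

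First I would handle the base case of the induction, namely the situation in part (i) of Lemma \ref{lemma:ind}: $K = \emptyset$ and $|M/E_0| = 1$. Then $Aut(M)$ is an $\omega$-stretch of $Aut(N)$ for some hereditarily cellular $N$ of depth $1$, i.e. $N$ is cellular. By Theorem \ref{lemma:cell growth}, $f_N(n) \sim c n^k$ for some $c > 0$, $k \in \N$. By Lemma \ref{lemma:stretch growth}, there is a finite-index $G_0 \leq Aut(N)$ such that $Aut(M)$ contains $G_0 \Wr S_\infty$ as a finite-index subgroup, and by Lemma \ref{lemma:fin index} it suffices to compute the growth rate of $G_0 \Wr S_\infty$. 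Since $G_0$ has finite index in $Aut(N)$, Lemma \ref{lemma:fin index} also gives $f_{G_0}(n) \sim c' n^k$ for the \emph{same} exponent $k$ (the index only changes the constant, not the polynomial degree — one should note here that $f_{G_0}(n)$ being squeezed between $f_{Aut(N)}(n)$ and a constant multiple of it forces $\sim c'n^k$). Now apply Theorem \ref{thm:gpart}(i): $f_{Aut(M)}(n) = \exp((b + o(1)) n^{1 - 1/(k+2)})$, which is of the required form with exponent $k+2 \geq 3$.

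Next I would handle the general depth-2 case via part (ii) of Lemma \ref{lemma:ind}: after removing empty pieces, $Aut(M)$ is a loose union of the groups $G_i := Aut(M)_{\set{X_i}}^{X_i}$, where each $X_i$ is either $K$ or an $E_0$-class. Each such $G_i$ is the automorphism group of a component-like piece; the $X_i$ coming from $K$ is finite, and each $X_i$ coming from an $E_0$-class is, by the definition of cellular-like partition (clauses (4) and (5)), itself an $\omega$-stretch-like structure over finite components, hence hereditarily cellular of depth $1$ or $2$. By Lemma \ref{lemma:loose growth}, $f_{Aut(M)}(n) \leq (cn^k)\prod_i f_{G_i}(n)$ over the infinite pieces; combined with the obvious lower bound $f_{Aut(M)}(n) \geq \max_i f_{G_i}(n)$ (restricting to a single piece), the growth rate of $M$ is governed, up to polynomial factors, by the largest $f_{G_i}(n)$. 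Each infinite $G_i$ has growth rate either polynomial (if that $E_0$-class has only finite $E_1$-classes, i.e. is cellular) or of the form $\exp((c_i + o(1)) n^{1 - 1/k_i})$ by the base case above. Taking the maximum and absorbing polynomial factors (which are $o(1)$ in the exponent $\exp(\cdot\, n^{1-1/k})$), we obtain $f_M(n) = \exp((c + o(1)) n^{1 - 1/k})$ for $k$ the largest exponent appearing, with $k \geq 2$ throughout (polynomial pieces can be treated uniformly by noting $n^k = \exp(k \log n) = \exp(o(1) \cdot n^{1-1/k'})$, so they never dominate an honest $\exp(n^{1-1/k})$ term).

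The main obstacle I anticipate is bookkeeping rather than conceptual: verifying carefully that \emph{each} component $G_i$ arising from an $E_0$-class really is itself hereditarily cellular of depth $\leq 2$ (so that the base case applies to it) — this requires unwinding the definitions of ``component'' and ``structure induced over its complement,'' and checking that naming the finitely many parameters from the complement only perturbs things by finite index (Lemma \ref{lemma:const}, Lemma \ref{lemma:fin index}). The second delicate point is the uniform absorption of polynomial and finite-index factors into the $o(1)$ in the exponent; this is routine but must be stated cleanly, since $\rn$ is a product of several factors of genuinely different growth types and one must confirm the dominant term dictates the exponent $k$ while the rest contribute only to lower-order terms.
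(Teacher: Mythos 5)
Your proposal follows essentially the same route as the paper: the base case is Lemma \ref{lemma:ind}(i) handled via Lemma \ref{lemma:stretch growth}, Theorem \ref{lemma:cell growth}, and Theorem \ref{thm:gpart}(i), and the general case is Lemma \ref{lemma:ind}(ii) handled via Lemma \ref{lemma:loose growth}. One step in your base case does not work as stated, though: you claim that since $f_{Aut(N)}(n) \sim c n^k$ and $f_{G_0}(n)$ is squeezed between $f_{Aut(N)}(n)$ and a constant multiple of it, it follows that $f_{G_0}(n) \sim c' n^k$. Being sandwiched between $cn^k$ and $Kcn^k$ only gives $f_{G_0}(n) = \Theta(n^k)$, not an asymptotic equivalence, and Theorem \ref{thm:gpart}(i) genuinely requires the form $(a+o(1))n^k$ for its input. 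The paper avoids this by noting only that $f_{G_0}(n)$ is bounded above by a polynomial (via Lemma \ref{lemma:fin index}) and then applying Theorem \ref{lemma:cell growth} directly to the finite-index subgroup itself, which yields the required $(a+o(1))n^d$ form; substituting that one line repairs your argument. The rest (the lower bound $f_{Aut(M)}(n) \geq \max_i f_{G_i}(n)$, the absorption of polynomial factors into the $o(1)$ in the exponent) is consistent with what Lemma \ref{lemma:loose growth} delivers, and your concern about the components being hereditarily cellular of depth at most $2$ with $K_i = \emptyset$ and a single $E_{i,0}$-class is exactly the structural fact the paper reads off from Lemma \ref{lemma:ind}(ii).
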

\begin{proof}
We proceed by induction on the construction of $Aut(M)$ from finite groups, using Lemma \ref{lemma:ind}.

For the base case, $Aut(M)$ is the $\omega$-stretch of $Aut(M')$, for some cellular $M'$. By Lemma \ref{lemma:stretch growth}, thus there is some $N \leq Aut(M)$ of finite index and $H \leq Aut(M')$ of finite index such that $N$ acts as $H \Wr S_\infty$. By Lemma \ref{lemma:fin index}, it suffices to prove the result for $N$. 
As $M'$ is cellular, its growth rate is bounded above by a polynomial, and thus so is the growth rate of $H$ by Lemma \ref{lemma:fin index}. By Theorem \ref{lemma:cell growth}, there exist $a >0, d \in \N$ such that $f_H(n) = (a+o(1))n^d$. Then by Theorem \ref{thm:gpart}(i), $f_N(n)$ is as desired.

For the inductive step, there is some $n \in \N$ such that $Aut(M)$ is the loose union of $\set{Aut(N_i) | i \in [n]}$, where each $N_i$ is hereditarily cellular of depth at most 2. Furthermore, for any $N_i$ of depth $2$, let $(K_i, E_{i,0}, E_{i,1})$ be its cellular-like decomposition; then $K_i = \emptyset$ and there is only one $E_{i,0}$-class. Then by the base case, the result holds for any $N_i$ of depth 2. Otherwise $N_i$ is cellular, and so has polynomial growth by Lemma \ref{lemma:cell growth}, or is finite. Regardless, the result follows by applying Lemma \ref{lemma:loose growth}.
\end{proof}

\begin{lemma} \label{lemma:d3 growth}
Suppose $M$ is hereditarily cellular of depth $d \geq 3$. Then there are $c>0, k \in \N$ such that, letting $\log^r(n)$ denote the $r$-fold iterated logarithm,
\[\rn = \exp \left((c+o(1))\left(\frac{n}{(\log^{d-2}(n))^{1/k}}\right)\right)\]
 
\end{lemma}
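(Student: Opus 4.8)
The plan is to prove the statement by induction on $d$; in the inductive step for a fixed $d\ge 3$, the growth rates of all hereditarily cellular structures of depth $<d$ are available — from Theorem \ref{lemma:cell growth} and Lemma \ref{lemma:d2 growth} when the depth is $\le 2$, and from the inductive hypothesis when it is between $3$ and $d-1$. Inside this step I would run a secondary induction on the number of loose unions and $\omega$-stretches used to build $Aut(M)$ from finite groups (a well-defined quantity, since the witnessing cellular-like partition is unique by Theorem \ref{thm:lach}(1)). By Lemma \ref{lemma:ind}, $Aut(M)$ is either (1) an $\omega$-stretch of $Aut(M')$ for a hereditarily cellular $M'$ of depth $d-1$, or (2) a loose union of the (finite) group on $K$ and the groups on the $E_0$-classes; in case (2) each $E_0$-class carries a hereditarily cellular structure of depth $\le d$ built with strictly fewer operations, and at least one has depth exactly $d$ (an $E_0$-class containing a component of $M$ of maximal depth, which is then also a component of that $E_0$-class).

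The core is the $\omega$-stretch step: if $Aut(N)$ is an $\omega$-stretch of $Aut(M')$ with $M'$ hereditarily cellular of depth $e\ge 2$, then $f_N(n)=\exp((c+o(1))\,n/(\log^{e-1}(n))^{1/k})$ for some $c>0$, $k\in\N$. Indeed, Lemma \ref{lemma:stretch growth} gives a finite-index $P\le Aut(N)$ acting as $H\Wr S_\infty$ with $[Aut(M'):H]<\infty$, so by Lemma \ref{lemma:fin index} $\log f_H(n)=\log f_{M'}(n)+O(1)$ and $f_H$ has the same leading asymptotics as $f_{M'}$. When $e=2$, Lemma \ref{lemma:d2 growth} gives $f_H(n)=\exp((b+o(1))n^{1-1/k'})$ with $k'\ge 2$, and Theorem \ref{thm:gpart}(ii) yields $f_P(n)=\exp((c+o(1))n/(\log n)^{1/(k'-1)})$; when $e\ge 3$, the inductive hypothesis gives $f_H(n)=\exp((c+o(1))n/(\log^{e-2}(n))^{1/k})$, and Theorem \ref{thm:gpart}(iii) yields $f_P(n)=\exp((c+o(1))n/(\log^{e-1}(n))^{1/k})$. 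A last use of Lemma \ref{lemma:fin index} passes from $P$ to $Aut(N)$. Taking $e=d-1$ settles case (1) directly, and it settles the base of the secondary induction (a depth-$d$ structure built with the fewest operations must be such an $\omega$-stretch); the secondary induction then covers the depth-$d$ components occurring in case (2).

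The step I expect to require the most care is the loose-union step. Lemma \ref{lemma:loose growth} gives $f_M(n)\le (cn^m)\prod_i f_{N_i}(n)$ over the infinite components $N_i$, and restricting orbits to a single part gives $f_M(n)\ge f_{N_i}(n)$ for every $i$. By the inductive hypotheses each $f_{N_i}$ is polynomial (depth $\le 1$), of shape $\exp((c_i+o(1))n^{1-1/k_i})$ (depth $2$), or of shape $\exp((c_i+o(1))n/(\log^{d_i-2}(n))^{1/k_i})$ (depth $3\le d_i\le d$); since some $d_i=d$, the terms with $d_i=d$ and $k_i$ maximal dominate all the rest, which fixes the shape of $f_M$. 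The genuine difficulty is the constant, since the upper bound carries the \emph{sum} of the constants of the dominant components while the lower bound carries only one of them. To reconcile these I would drop Lemma \ref{lemma:loose growth} here in favour of the sharper sandwich of Lemma \ref{lemma:loose dir wr}, which confines $Aut(M)$ between two groups of finite index in one another, the smaller of which is, up to finite index, a direct product of copies of the groups $Aut(N_i)$; by Lemma \ref{lemma:fin index} this reduces the constant to a question about a direct product, and the exact identity $f_{G\times G'}(n)=\sum_{j}f_G(j)f_{G'}(n-j)$ together with the asymptotic homogeneity $g(\alpha n)\sim\alpha g(n)$ of $g(n)=n/(\log^r(n))^{1/k}$ (uniformly for $\alpha$ bounded away from $0$ and $1$) shows, after a short analysis of where the convolution is maximized, that $\log f_{G\times G'}(n)=(\max(c,c')+o(1))g(n)$. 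Iterating, the leading constant of $f_M$ is the \emph{maximum} of the constants of its dominant components, matching the lower bound, so $f_M$ has exactly the asserted form. (The analogous but more delicate optimization over a simplex is what is needed for the combining step in Lemma \ref{lemma:d2 growth}.)
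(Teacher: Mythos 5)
Your proposal follows the same skeleton as the paper's proof, which is stated in essentially one line: induct on $d$, with a secondary induction on the construction of $Aut(M)$ from finite groups, handling $\omega$-stretches via Lemma \ref{lemma:stretch growth}, Lemma \ref{lemma:fin index} and the relevant clause of Theorem \ref{thm:gpart}, and loose unions by the same argument as in Lemma \ref{lemma:d2 growth}. Where you genuinely depart from the paper is the loose-union step. The paper simply cites Lemma \ref{lemma:loose growth}, whose upper bound $f_M(n)\le (cn^m)\prod_i f_{N_i}(n)$ pins the leading constant only between $\max_i c_i$ (from the trivial lower bound) and $\sum_i c_i$; when two or more dominant components share the same depth and the same $k$, this does not by itself yield a single constant $c$ as the statement requires. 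Your substitution of the finite-index sandwich of Lemma \ref{lemma:loose dir wr} followed by the exact convolution identity for direct products, together with the observation that $g(n)=n/(\log^r n)^{1/k}$ is asymptotically homogeneous of degree $1$ so that the convolution is maximized at an endpoint and the constant is $\max_i c_i$, closes this and is a genuine sharpening of what the paper writes down. Your parenthetical remark is also on point: in Lemma \ref{lemma:d2 growth} the same convolution yields the constant $(c_1^k+\dots+c_j^k)^{1/k}$ rather than the maximum, since $x\mapsto x^{1-1/k}$ is strictly concave and the optimum is interior, so the endpoint argument does not transfer and that case needs the separate optimization you mention. The only points wanting a word of care in a full write-up are the uniformity of the $o(1)$ terms across the $n+1$ summands of the convolution (handled by splitting off $j\le\delta n$ and $j\ge(1-\delta)n$) and the verification that a depth-$d$ $E_0$-class, as a structure induced over its complement, has cellular-like partition with $K=\emptyset$ and a single $E_0$-class so that Lemma \ref{lemma:ind}(1) applies to it; both are routine, and the latter matches how the paper itself treats the depth-$2$ components.
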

\begin{proof}
We proceed by induction on $d$. For both the base case and inductive step, we perform another induction on the construction of $Aut(M)$ from finite groups. The proof is as in Lemma \ref{lemma:d2 growth}, except we use the relevant part of Theorem \ref{thm:gpart}.
\end{proof}

\section{The spectrum of slower than exponential growth rates}

The following two theorems of Simon reduce many questions about growth rates of $\omega$-categorical $M$ to the case where $M$ is stable.

\begin{notation}
We let $\phi$ denote the golden ratio, so $\phi \approx 1.618$.
\end{notation}

\begin{theorem}[\cite{Sim}*{Theorem 1.6}] \label{thm:S 1.6}
Suppose $M$ is such that there is no polynomial $p(n)$ such that $\rn \geq \frac{\phi^n}{p(n)}$. Then there is a stable reduct $M_*$ of $M$ such that $f_{M_*}(n) = \rn$ for all $n$.
\end{theorem}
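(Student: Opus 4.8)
The plan is to reduce the statement to a single structural dichotomy and to build the required reduct canonically rather than searching for it. Let $G = Aut(M)$, and define $G^*$ to be the set of all $\sigma \in Sym(M)$ such that for every finite $S \subseteq M$ the sets $S$ and $\sigma(S)$ lie in the same $G$-orbit on subsets. Each such condition is closed in the topology of pointwise convergence, so $G^*$ is a closed supergroup of $G$ and hence $Aut(M_*)$ for an $\omega$-categorical reduct $M_*$ of $M$ (reducts inherit $\omega$-categoricity by Ryll--Nardzewski, as a larger group has no more orbits on tuples). By construction $G^*$ fixes every $G$-orbit on $n$-subsets setwise, so each $G^*$-orbit is contained in a single $G$-orbit; since also $G \leq G^*$, the two groups have exactly the same orbits on $n$-subsets. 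Thus $f_{M_*}(n) = \rn$ for all $n$ with no further work, and the whole theorem reduces to showing that, under the hypothesis, $M_*$ is stable.

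For the stability of $M_*$ I would argue by contradiction through the key lemma: \emph{if $M_*$ is unstable then $f_{M_*}(n) \geq \phi^n/p(n)$ for some polynomial $p$}, which directly contradicts the assumption that $\rn < \phi^n/p(n)$ for every $p$. Since $\rn$ lies below $\phi^n < 2^n$ it is in particular subexponential, so by \cite{Mac3} both $M$ and $M_*$ are NIP. Consequently the order property present in an unstable $M_*$ can be witnessed in a tame, linear fashion: after naming finitely many parameters---which by Lemma \ref{lemma:const} costs only a polynomial factor and so cannot cross the $\phi^n$ threshold---I would extract a $\emptyset$-definable relation together with an infinite indiscernible sequence $(a_i)_{i \in \mathbb{Q}}$ on which that relation defines a linear order.

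The crux is converting this ordered configuration into a lower bound on the number of unordered $n$-substructures, and this is precisely where the maximality of $M_*$ is used. Because $G^* = Aut(M_*)$ already contains every permutation preserving the $G$-orbit structure on finite sets, the survival of a genuine, non-symmetrizable order in $M_*$ forces bounded-size local configurations whose $M_*$-orbit type records relative order; a bare definable linear order, as in $(\mathbb{Q},\leq)$, would instead have been collapsed to a pure set upon passing to $G^*$, which is exactly why such examples have growth $1$ and cause no obstruction. Stringing these local configurations along the indiscernible sequence then encodes compositions of $n$ into blocks of two distinguishable types, yielding pairwise non-isomorphic $n$-substructures in number at least the $n$-th Fibonacci number, hence $\geq \phi^n/p(n)$. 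I expect this conversion to be the main obstacle: it is where the exact value $\phi$---rather than merely ``exponential''---is forced, and where $\omega$-categoricity must be invoked to keep the relevant set of $2$-types finite so that distinct compositions provably land in distinct $G^*$-orbits. Combined with the reduction of the first paragraph, the lemma produces the stable reduct $M_*$ with $f_{M_*}(n) = \rn$, completing the proof.
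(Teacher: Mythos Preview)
The paper does not prove this theorem; it is quoted from \cite{Sim} and used as a black box. So there is no in-paper proof to compare against, and your proposal should be judged on its own merits.

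Your construction of $M_*$ is correct and is essentially the canonical reduct Simon uses: $G^*$ is indeed a closed supergroup of $G$ (your closure argument works because for a fixed finite $S$ the condition ``$\sigma(S)$ lies in the $G$-orbit of $S$'' is determined by $\sigma|_S$, hence clopen), and your verification that $G$ and $G^*$ have identical orbits on finite subsets is clean. So the reduction to ``$M_*$ unstable $\Rightarrow$ $f_{M_*}(n)\geq \phi^n/p(n)$ for some polynomial $p$'' is sound, and your observation that $(\Qq,\leq)$ collapses to a pure set under this construction is exactly the right intuition for why maximality matters.

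The gap is that this reduction is the trivial part of Simon's theorem, and what remains is essentially the whole result. Your steps 3--4 assert that an unstable, NIP, $\omega$-categorical structure that is \emph{maximal} in your sense must contain configurations encoding compositions into blocks of two types, yielding Fibonacci growth. You acknowledge this is ``the main obstacle'', and it is: one must show that the definable order surviving in $M_*$ is witnessed not merely by a formula on tuples but by the \emph{unordered} type of some bounded-size configuration along the indiscernible sequence, and that two such configurations of different ``colour'' can be freely interleaved without collapsing orbits. Neither claim is automatic---the first uses maximality in a way that requires a genuine structural analysis (in Simon's paper, a rank argument and a careful study of how op-dimension interacts with orbits on sets), and the second requires controlling the interaction between adjacent blocks. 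As written, your argument names the lemma to be proved but does not supply the mechanism; the Fibonacci bound is the theorem, not a routine consequence of instability plus NIP plus maximality.
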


The point of the hypothesis of Theorem \ref{thm:S 1.6} is that $M$ should not encode Example \ref{ex:growth}(3), even over finitely many parameters.

\begin{definition}
$M$ is {\em primitive} if it has no $\emptyset$-definable equivalence relation, other than equality and the one-class relation (or equivalently, if $Aut(M)$ is primitive as a permutation group).
\end{definition}

\begin{theorem}[\cite{Sim}*{Theorem 1.3}] \label{thm:S 1.3}
Suppose $M$ is primitive, $\rn$ is not constant equal to 1, and there is no polynomial $p(n)$ such that $\rn \geq \frac{2^n}{p(n)}$. Then $M$ must be stable, but not $\omega$-stable.
\end{theorem}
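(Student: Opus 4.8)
The plan is to prove the two assertions separately: that $M$ is stable, and that $M$ is not $\omega$-stable. For stability, since $M$ is $\omega$-categorical and a theory is stable exactly when it has neither the independence property (IP) nor the strict order property (SOP), it suffices to rule out both. It is worth keeping in mind from the start that both hypotheses on $M$ are genuinely needed: $(\Q,<)$ is primitive, $\omega$-categorical, unstable (it has SOP), and has $\rn\equiv1$, so primitivity alone does not yield stability; and Example~\ref{ex:growth}(3) is $\omega$-categorical, unstable, and has $\rn$ of order $\phi^n$ (in particular $\rn<2^n/p(n)$ for every polynomial $p$), so the growth hypothesis alone does not yield stability either. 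The argument must therefore exploit both primitivity and the failure of $\rn$ to be eventually $1$.

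\emph{Ruling out IP.} Suppose some formula $\varphi(\bar x,\bar y)$ has IP. Moving to $M^{\mathrm{eq}}$ and using $\omega$-categoricity, one may take $\varphi$ to shatter, for every $n$, an $n$-element set of imaginaries, each of which is interalgebraic with a bounded tuple of real elements; so we obtain, inside a real set of size $O(n)$, an $n$-element ``shattered'' set together with $2^n$ real witnesses for its traces. The obstruction is precisely the one met in Lemma~\ref{lemma:ms}: a given $O(n)$-element substructure can be reached from as many as $(O(n))!$ choices of the shattered set, so without further input only $2^n/(O(n))!$ pairwise non-isomorphic substructures are guaranteed, which is useless. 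The resolution is to use primitivity to build a definable ``scale'' that rigidifies the coding: since $\varphi$ (or a relation obtained from it over finitely many parameters, harmless by Lemma~\ref{lemma:const}) is a nontrivial $\emptyset$-definable relation on a primitive structure, its finite iterates connect all of $M$, and one leverages this to arrange that distinct bipartite graphs on the two $n$-element sides produce non-isomorphic substructures of size $O(n)$, yielding $\rn\geq 2^n/p(n)$ for some polynomial $p$ and contradicting the hypothesis. I expect this step — upgrading the cheap bound $2^n/(O(n))!$ to the sharp $2^n/\mathrm{poly}(n)$ by genuinely exploiting primitivity, in the spirit of Macpherson's primitivity conjecture — to be the main obstacle.

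\emph{Ruling out SOP.} Now assume $M$ is NIP; if it also had SOP it would be unstable, and an unstable NIP $\omega$-categorical structure interprets, over finitely many parameters (again harmless by Lemma~\ref{lemma:const}), an infinite dense linear order. Using Cameron's description of the few primitive reducts of $(\Q,<)$, together with how the interpreted order sits inside $M$, one argues that either $M$ carries no further $\emptyset$-definable structure over this order, in which case $\rn\equiv1$ and the hypothesis is violated, or the order pins down the positions of an $O(n)$-element substructure well enough to again anchor a coding of $2^n$-many non-isomorphic substructures, contradicting the hypothesis. Hence $M$ is stable.

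\emph{$M$ is not $\omega$-stable.} Suppose for contradiction that $M$ is $\omega$-stable. Being also $\omega$-categorical, $M$ has finite Morley rank and, by the Cherlin--Harrington--Lachlan structure theory, is coordinatized by degenerate (pure-set or equivalence-relation) geometries together with affine, projective, and bilinear geometries over finite fields. Primitivity forces $\acl(\emptyset)=\emptyset$ and rules out all proper $\emptyset$-definable subsets and all nontrivial $\emptyset$-definable equivalence relations, which leaves only the possibilities that $M$ is a pure set or is definably isomorphic to one of the listed finite-field geometries. A pure set has $\rn\equiv1$, excluded by hypothesis; and each of the finite-field geometries already has $q^{\Theta(n^2)}$ orbits on $n$-tuples, hence $\rn\geq 2^n/p(n)$, again contradicting the hypothesis. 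Therefore $M$ is not $\omega$-stable, which completes the proof.
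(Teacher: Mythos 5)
This theorem is not proved in the paper at all: it is imported verbatim from Simon's work (cited as \cite{Sim}*{Theorem 1.3}) and used as a black box, so there is no in-paper argument to compare yours against. Judged on its own, your proposal is a plan rather than a proof, and the plan has gaps exactly where the real difficulty lies. In the IP case, the entire content of the theorem is the step you yourself flag as ``the main obstacle'': converting a shattered family into $2^n/p(n)$ pairwise non-isomorphic $O(n)$-substructures rather than the useless $2^n/(O(n))!$. Saying that ``one leverages'' primitivity to ``rigidify the coding'' names the goal, not an argument; this upgrade is the engine of Macpherson's primitivity conjecture and occupies the bulk of Simon's paper. The SOP branch has the same problem: the dichotomy ``either $M$ carries no further $\emptyset$-definable structure over the order, whence $\rn\equiv 1$, or the order anchors a coding of $2^n$ substructures'' is asserted, not established. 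The definable linear order produced from unstable NIP lives on a set of imaginaries, so Cameron's classification of reducts of $(\Q,<)$ does not apply to it directly, and there is a large space of intermediate possibilities (orders on quotients, orders interacting with equivalence relations) that your two alternatives do not obviously exhaust.

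The final step also contains a concrete error. It is not true that primitivity plus $\omega$-categoricity plus $\omega$-stability leaves only pure sets and the finite-field geometries: the structure induced on the $k$-subsets of a pure set (a ``Grassmannian'') is primitive, $\omega$-categorical, and $\omega$-stable, and is neither a pure set nor an affine/projective/bilinear geometry over a finite field. The intended conclusion survives for such examples --- their growth rate is faster than exponential, as the paper's own remark after Theorem \ref{thm:main} notes for the $k=2$ case, where $\rn$ counts bipartite graphs with $n$ edges --- but your case analysis as written is incomplete, and note also that you bound orbits on $n$-\emph{tuples} where $\rn$ counts orbits on $n$-\emph{sets}, which silently costs a factor of $n!$. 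If you want a correct route to ``not $\omega$-stable'' within this paper's toolkit, it is the one given in the remark after Theorem \ref{thm:prim}: an $\omega$-categorical $\omega$-stable structure with slow growth would be hereditarily cellular by Theorem \ref{thm:lach} and Theorem \ref{thm:G2}, and the only infinite primitive hereditarily cellular structure is a pure set, excluded by $\rn\not\equiv 1$. But that still leaves stability itself, which neither the paper nor your sketch proves.
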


The following theorem was initially proven by Macpherson \cite{Mac1} with the constant $c = 2^{1/5}$ in place of $2$. This was then improved to $c \approx 1.324$ by Merola \cite{Mer}, and then to $c \approx 1.576$ by Simon \cite{Sim}. The bound $c=2$ is optimal, as shown by the ``local order'' $S(2)$ (see \cite{olig}*{\S 3.3}).

\begin{theorem}[\cite{Mac1}*{Conjecture 3.2}] \label{thm:prim}
Suppose $M$ is primitive and $f_M(n)$ is not constant equal to 1. Then there is some polynomial $p(n)$ such that $f_M(n) > \frac{2^n}{p(n)}$.
\end{theorem}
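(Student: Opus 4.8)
The plan is to reduce Theorem \ref{thm:prim} (the restatement of \cite{Mac1}*{Conjecture 3.2}) to the stable case via Simon's dichotomy, and then invoke our gap theorem. Concretely, suppose toward a contradiction that $M$ is primitive, $f_M(n)$ is not constant equal to $1$, and yet for every polynomial $p(n)$ we have $\rn < \frac{2^n}{p(n)}$ (eventually). First I would note this hypothesis is exactly the hypothesis of Theorem \ref{thm:S 1.3}, so $M$ must be stable (in fact stable but not $\omega$-stable). Now apply Theorem \ref{thm:main}: since $M$ is stable, either $M$ is monadically stable with $\rn$ slower than exponential, or $M$ is not monadically stable and $\rn > \floor{n/4}!$. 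The latter is incompatible with $\rn < 2^n$, so $M$ is monadically stable, hence $\omega$-categorical monadically stable, hence hereditarily cellular by Theorem \ref{thm:G2}.

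The next step is to rule out the hereditarily cellular case under the primitivity assumption, or rather to show it forces $\rn \equiv 1$. A hereditarily cellular $M$ carries a canonical cellular-like partition $(K, E_0, E_1)$. If $M$ is infinite, I would argue that primitivity forces this partition to be essentially trivial: $K = \acl(\emptyset)$ must be empty (a nonempty finite $\emptyset$-definable set in an infinite primitive structure would, together with its complement, or via the orbit equivalence relation, contradict primitivity — more carefully, $K$ nonempty and $M$ infinite gives a proper nontrivial $\emptyset$-definable equivalence relation unless $|M\setminus K| \le 1$); similarly $E_0$ can have only one class (else it is a nontrivial proper $\emptyset$-definable equivalence relation), and then $E_1$, having infinitely many classes on a single $E_0$-class, is again a nontrivial proper $\emptyset$-definable equivalence relation unless its classes are singletons. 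So primitivity forces $E_1$ to be equality, i.e., $Aut(M)$ induces the full symmetric group $S_\infty$ on $M/E_1 = M$; combined with the component structure being trivial (the component on an $E_1$-class is a one-element structure), $Aut(M) = S_\infty$ and $\rn = 1$ for all $n$, contradicting our assumption. Thus no counterexample exists.

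There is a small gap to be careful about: Theorem \ref{thm:main} and Theorem \ref{thm:G2} are stated for $\omega$-categorical $M$, and Theorem \ref{thm:S 1.3} already guarantees stability but I should confirm $M$ is $\omega$-categorical throughout — which it is, by the standing convention that $M$ denotes a countable $\omega$-categorical structure, and this is also implicit in the statement of Theorem \ref{thm:prim} and the surrounding discussion of growth rates. I should also double-check the edge cases in the cellular-like partition argument: Definition \ref{def:cell part} allows $K$ nonempty but requires $|M/E_0|$ finite and each $E_0$-class infinite, so when $M$ is infinite and primitive all the invariant equivalence relations in sight collapse, leaving $Aut(M)$ a closed primitive subgroup of $S_\infty$ of slower-than-exponential (indeed, in the hereditarily cellular case, polynomially or sub-exponentially bounded) growth rate; by the classification of such — or more elementarily, since $Aut(M)$ induces $S_\infty$ on $M$ and the components are trivial, $Aut(M) = S_\infty$ — we get $\rn \equiv 1$.

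The main obstacle, I expect, is making the cellular-like-partition-vs-primitivity step fully rigorous: one must check that \emph{each} of the ingredients of Definition \ref{def:cell part} (the finite set $K$, the relation $E_0$, the relation $E_1$) is, in the $\omega$-categorical setting, $\emptyset$-definable (equivalently $Aut(M)$-invariant with finitely many classes, so $\emptyset$-definable by $\omega$-categoricity), and that their nontriviality contradicts primitivity, handling the degenerate cases ($|M| \le 1$, $K = M$, etc.) where a putative ``equivalence relation'' is actually equality or the full relation. This is routine but fiddly; none of it involves the hard analytic asymptotics, which are entirely subsumed into the already-proven Theorems \ref{thm:main}, \ref{thm:G2}, \ref{thm:S 1.3}.
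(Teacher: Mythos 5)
Your proposal is correct and follows essentially the same route as the paper: apply Theorem \ref{thm:S 1.3} to get stability, Theorem \ref{thm:main} to get monadic stability (hence hereditary cellularity via Theorem \ref{thm:G2}), and then observe that the only infinite primitive hereditarily cellular structure is a pure set, so $\rn \equiv 1$. The only difference is that you spell out the last step (primitivity collapsing $K$, $E_0$, $E_1$) in detail, where the paper states it as a one-line observation; the paper also notes in a remark the shortcut you mention in passing, namely that Theorem \ref{thm:S 1.3} rules out $\omega$-stability while hereditarily cellular structures are $\omega$-stable by Theorem \ref{thm:lach}.
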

\begin{proof}
Suppose $M$ is a counterexample. By Theorem \ref{thm:S 1.3}, $M$ must be stable. By Theorem \ref{thm:main}, $M$ must be monadically stable, and so hereditarily cellular. But the only infinite primitive hereditarily cellular structure is a pure set.
\end{proof}
\begin{remark}
We could also use the fact that Theorem \ref{thm:S 1.3} shows $M$ cannot be $\omega$-stable, and then apply Theorem \ref{thm:lach}.
\end{remark}

We now show there is a gap from slower than exponential growth rates to roughly $\phi^n$, and almost completely characterize the spectrum of slower than exponential growth rates.

\begin{theorem} \label{thm:phi growth}
Suppose $M$ is $\omega$-categorical and $\rn < \frac{\phi^n}{p(n)}$, for every polynomial $p(n)$. Then $\rn$ is slower than exponential and one of the following holds.
\begin{enumerate}
\item There are $c>0$, $k \in \N$ such that $\rn \sim cn^k$.
\item There are $c>0$, $k \in \N$ such that  $\rn =\exp\left((c+o(1))\left(n^{1-\frac1k}\right)\right)$
\item Let $\log^r(n)$ denote the $r$-fold iterated logarithm. There are $c>0$ and $k$, $r \in \N$ such that 
$ \rn = \exp\left((c+o(1))\left(\frac{n}{\left(\log^{r}(n)\right)^{1/k}}\right)\right)$
\end{enumerate}
Furthermore, in the first case all $k$ are achievable, in the second case all $k \geq 2$ are, and in the third case all $(r, k) \in (\N^+)^2$ are.
\end{theorem}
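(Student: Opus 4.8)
The plan is to use Simon's theorem to reduce to the case that $M$ is hereditarily cellular, read the three cases off the depth of $M$, and then exhibit explicit structures realizing every parameter in the ``furthermore'' clause. By hypothesis no polynomial $p$ satisfies $\rn \geq \phi^n/p(n)$, so Theorem~\ref{thm:S 1.6} provides a stable reduct $M_*$ of $M$ (still $\omega$-categorical, being a reduct) with $f_{M_*}(n) = \rn$; replace $M$ by $M_*$. Now Theorem~\ref{thm:main} applies: were $M$ not monadically stable we would have $\rn > \floor{n/4}!$, contradicting the hypothesis, so $M$ is monadically stable and, in particular, $\rn$ is slower than exponential. By Theorem~\ref{thm:G2}, $M$ is hereditarily cellular; a finite structure has $\rn$ eventually $0$, so we may assume $M$ is infinite and set $d := depth(M) \geq 1$.

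\emph{The trichotomy.} If $d = 1$ then $M$ is cellular and Theorem~\ref{lemma:cell growth} yields case~(1). If $d = 2$, Lemma~\ref{lemma:d2 growth} yields case~(2), and inspecting its proof the exponent $1-1/k$ is produced by Theorem~\ref{thm:gpart}(i) in the form $1 - 1/(\ell+2)$ from a polynomial growth rate of degree $\ell \geq 0$, so $k \geq 2$. If $d \geq 3$, Lemma~\ref{lemma:d3 growth} yields case~(3) with $r = d-2 \geq 1$; here the relevant passages of Theorem~\ref{thm:gpart} are~(ii), which creates the factor $(\log n)^{1/(k'-1)}$ with $k' \geq 2$, and~(iii), which preserves it, so $k \geq 1$. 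That the three cases are pairwise incompatible and all slower than exponential is immediate from the shapes of the functions: polynomials; $\exp(n^{1-1/k})$ with $2 \leq k < \infty$; and $\exp(n/(\log^r n)^{1/k})$ with $r,k \geq 1$, the latter two being $\exp(o(n))$ but not polynomially bounded.

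\emph{Achievability.} For case~(1): an equivalence relation with infinitely many classes, each of size $k+1$, is cellular and $\rn$ counts partitions of $n$ into parts of size at most $k+1$, a polynomial of degree $k$ (with $k = 0$ giving the pure set, $\rn = 1$). For case~(2) with $k \geq 2$: take the $\omega$-stretch of the depth-$1$ example of degree $k-2$ just constructed (a hereditarily cellular structure of depth $2$) and apply Theorem~\ref{thm:gpart}(i) with $\ell = k-2$. For case~(3) with $r,k \geq 1$: begin with the depth-$2$ structure of growth $\exp((b+o(1))n^{1-1/(k+1)})$ supplied by case~(2) (legitimate since $k+1 \geq 2$), form its $\omega$-stretch and apply Theorem~\ref{thm:gpart}(ii) with $k' = k+1$ to obtain case~(3) with parameters $(1,k)$, then $\omega$-stretch $r-1$ more times, each step invoking Theorem~\ref{thm:gpart}(iii) to increase the iterated-logarithm index while keeping $k$ fixed; the result is hereditarily cellular of depth $r+2$.

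\emph{Where the difficulty lies.} The analytic work is all imported (Theorem~\ref{lemma:cell growth}, Lemmas~\ref{lemma:d2 growth} and~\ref{lemma:d3 growth}, and especially Theorem~\ref{thm:gpart}), so the task is careful bookkeeping: pinning down the exact parameter ranges ($k \geq 2$ in~(2); $r,k \geq 1$ in~(3)) by following the parameter transformations through Theorem~\ref{thm:gpart}, and checking that the advertised extremal examples are $\omega$-categorical and hereditarily cellular of the intended depth. The point needing most care is excluding, when $d = 2$ (resp.\ $d \geq 3$), the values $k \leq 1$ (resp.\ $k \leq 0$ or $r = 0$): one must confirm that the loose-union steps inside the proofs of Lemmas~\ref{lemma:d2 growth} and~\ref{lemma:d3 growth}, which multiply growth rates, never push the exponent parameter $k$ below the value dictated by the base $\omega$-stretch.
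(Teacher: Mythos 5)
Your proposal is correct and follows essentially the same route as the paper: reduce to the stable case via Theorem~\ref{thm:S 1.6}, conclude $M$ is hereditarily cellular via Theorems~\ref{thm:main} and~\ref{thm:G2}, read the trichotomy off the depth using Theorem~\ref{lemma:cell growth} and Lemmas~\ref{lemma:d2 growth} and~\ref{lemma:d3 growth}, and realize all parameters by iterated $\omega$-stretches (the paper's case-(1) example is $k+1$ infinite unary predicates with growth $\binom{n+k}{k}$ rather than your equivalence relation with classes of size $k+1$, but both give $\sim cn^k$). Your write-up simply carries out the parameter bookkeeping that the paper leaves implicit.
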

\begin{proof}
By Theorem \ref{thm:S 1.6}, we may assume $M$ is stable, and so $M$ must be hereditarily cellular. The result then follows from Theorem \ref{lemma:cell growth} and Lemmas \ref{lemma:d2 growth} and \ref{lemma:d3 growth}.

In the first case we may achieve any $k$ by taking the structure with $k+1$ unary relations, each infinite, that partition the domain, whose growth rate is $\binom{n+k}{k}$. The remaining cases then follow from Theorem \ref{thm:gpart} by taking iterated wreath products with $S_\infty$.
\end{proof}

In particular, we confirm the following conjecture of Macpherson.

\begin{corollary}[\cite{Mac2}*{Conjecture 1.4}] \label{thm:pfe}
Suppose $M$ is such that $f_M(n)$ is not bounded above by a polynomial, but there is some $\epsilon > 0$ such that $\rn$ is bounded above by $e^{n^{1-\epsilon}}$. Then there is some $k \in \N$ such that, for any $\epsilon > 0$
 \[\exp\left(n^{(1-1/k) -\epsilon}\right) < \rn < \exp\left(n^{(1-1/k) +\epsilon}\right)\]
\end{corollary}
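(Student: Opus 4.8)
The plan is to deduce this directly from Theorem \ref{thm:phi growth}, so the work is essentially a case analysis. First I would check that the hypotheses transfer: if $\rn \leq e^{n^{1-\epsilon}}$ eventually for some fixed $\epsilon > 0$, then $\rn$ is in particular slower than exponential, so for every polynomial $p(n)$ we have $\rn < \frac{\phi^n}{p(n)}$ eventually. Hence Theorem \ref{thm:phi growth} applies and $\rn$ falls into exactly one of its three cases.

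Next I would rule out the first and third cases. The first case, $\rn \sim cn^k$, is excluded outright by the assumption that $\rn$ is not bounded above by a polynomial. For the third case, $\rn = \exp\!\left((c+o(1))\frac{n}{(\log^{r}(n))^{1/k}}\right)$ with $c > 0$; since $\log^{r}(n)$ grows more slowly than any positive power of $n$, we have $\frac{n}{(\log^{r}(n))^{1/k}} \geq n^{1-\epsilon}$ eventually, and in fact $(c+o(1))\frac{n^{\epsilon}}{(\log^{r}(n))^{1/k}} \to \infty$, so $\rn > e^{n^{1-\epsilon}}$ eventually. This contradicts the standing hypothesis for the fixed $\epsilon$, so the third case is impossible. (One also notes that $k=1$ in the second case would make $\rn$ bounded, again contradicting that $\rn$ is not polynomially bounded, so there in fact $k \geq 2$.)

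Therefore the second case holds: there are $c > 0$ and $k \in \N$ with $\rn = \exp\!\left((c+o(1))n^{1-\frac1k}\right)$, and this $k$ is the one asserted in the statement. To finish I would observe that, for a fixed $c > 0$, the quantity $(c+o(1))n^{1-1/k}$ lies eventually between $n^{(1-1/k)-\epsilon}$ and $n^{(1-1/k)+\epsilon}$ for any prescribed $\epsilon > 0$, because $n^{-\epsilon}$ eventually drops below $c/2$ while $n^{\epsilon}$ eventually exceeds $2c$; applying the (monotone) exponential then gives $\exp(n^{(1-1/k)-\epsilon}) < \rn < \exp(n^{(1-1/k)+\epsilon})$ eventually. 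The genuine content all sits inside Theorem \ref{thm:phi growth} (and hence, via Simon's reduction and Lachlan's classification, in the earlier sections); within this corollary there is no real obstacle beyond organizing the case elimination and the two elementary asymptotic comparisons above.
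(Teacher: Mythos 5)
Your proposal is correct and matches the paper's (implicit) argument: the paper states this as an immediate consequence of Theorem \ref{thm:phi growth}, and your case elimination and asymptotic comparisons are exactly the details being left to the reader. Nothing is missing.
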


We also slightly sharpen \cite{Mac2}*{Theorem 1.2}.

\begin{corollary} \label{thm:poly part}
Either $f_M(n)$ is bounded above by a polynomial or bounded below by the partition function.
\end{corollary}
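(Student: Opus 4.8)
The goal is Corollary \ref{thm:poly part}: either $f_M(n)$ is bounded above by a polynomial, or it is bounded below by the partition function. The plan is to run a dichotomy: either $M$ has polynomial growth, or it does not, and in the latter case I must produce a lower bound of partition-function type, roughly $e^{c\sqrt n}$.

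First I would dispose of the case where $\rn$ grows at least as fast as $\phi^n / p(n)$ for some polynomial $p$. In that regime the partition function (which is $\exp(O(\sqrt n))$, hence subexponential) is trivially dominated, so the conclusion holds immediately. So I may assume $\rn < \phi^n / p(n)$ for every polynomial $p$, and invoke Theorem \ref{thm:phi growth}: either $\rn \sim c n^k$ (so $\rn$ is bounded above by a polynomial and we are in the first alternative), or $\rn$ falls into case (2) or case (3) of that theorem. In cases (2) and (3), $\rn = \exp((c+o(1)) g(n))$ where $g(n)$ is either $n^{1-1/k}$ with $k \geq 2$, or $n / (\log^r n)^{1/k}$. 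In every such case $g(n) \geq n^{1/2}$ eventually — for case (2) because $1 - 1/k \geq 1/2$, and for case (3) because $n/(\log^r n)^{1/k}$ grows faster than any fixed power $n^{1-\epsilon}$, in particular faster than $n^{1/2}$. Hence $\rn \geq \exp(c' \sqrt n)$ eventually for a suitable $c' > 0$, which beats the partition function $p(n) \sim \frac{1}{4n\sqrt3}\exp(\pi\sqrt{2n/3})$ once we check the constant; but actually we do not even need the constant, since in cases (2) and (3) the exponent $g(n)$ is strictly of larger order than $\sqrt n$ (either a strictly larger power, or $n$ divided by an iterated-log factor), so $\rn$ eventually exceeds $\exp(C\sqrt n)$ for every constant $C$, and in particular exceeds the partition function.

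Assembling this: apply Theorem \ref{thm:S 1.6} and Theorem \ref{thm:phi growth} exactly as in the proof of Theorem \ref{thm:phi growth}, then read off the three cases. The only genuinely new content beyond Theorem \ref{thm:phi growth} is the elementary observation that cases (2) and (3) always dominate the partition function, which is a one-line comparison of growth orders. So the proof is essentially a corollary in the literal sense. I would write it as follows.

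\begin{proof}
If there is a polynomial $p(n)$ with $\rn \geq \phi^n/p(n)$, then since the partition function is $\exp(O(\sqrt n))$ and $\phi > 1$, we have that $\rn$ is bounded below by the partition function, and we are done. So we may assume $\rn < \phi^n/p(n)$ for every polynomial $p(n)$, and apply Theorem \ref{thm:phi growth}. In case (i) of that theorem, $\rn$ is bounded above by a polynomial. In cases (ii) and (iii), $\rn = \exp((c+o(1))g(n))$ where $g(n) = n^{1 - 1/k}$ with $k \geq 2$ in case (ii), and $g(n) = n/(\log^r(n))^{1/k}$ in case (iii). In either case $g(n)/\sqrt n \to \infty$: in case (ii) because $1 - 1/k \geq 1/2$, and in case (iii) because $n/(\log^r(n))^{1/k}$ eventually exceeds $n^{1 - \epsilon}$ for every $\epsilon > 0$. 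Hence for every constant $C > 0$ we have $\rn > \exp(C\sqrt n)$ eventually, so $\rn$ is bounded below by the partition function.
\end{proof}

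The main (and only) obstacle worth flagging is making sure the growth-order comparison in cases (ii) and (iii) is airtight, in particular that the $o(1)$ and the constant $c > 0$ do not interfere; but since $g(n)$ is of strictly larger order than $\sqrt n$ in both cases, $(c + o(1))g(n) - C\sqrt n \to +\infty$ for any fixed $C$, so there is nothing delicate. Everything else is a direct appeal to Theorem \ref{thm:phi growth} and Theorem \ref{thm:S 1.6}.
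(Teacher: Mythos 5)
Your overall strategy (dispose of the regime $\rn \geq \phi^n/p(n)$, then read the conclusion off the trichotomy of Theorem \ref{thm:phi growth}) is workable in spirit, but there is a genuine gap at exactly the borderline case. In case (ii) with $k=2$ you have $g(n)=n^{1/2}$, so $g(n)/\sqrt{n}=1$ and your claim that $g(n)/\sqrt{n}\to\infty$ is false; consequently the assertion that $\rn>\exp\left(C\sqrt{n}\right)$ eventually for \emph{every} $C$ fails there. This is not a removable technicality: $k=2$ is achievable (the generic equivalence relation with infinitely many infinite classes realizes it, and its growth rate \emph{is} the partition function, $\exp\left((\pi\sqrt{2/3}+o(1))\sqrt{n}\right)$), and Theorem \ref{thm:phi growth} by itself gives no lower bound on the constant $c$ appearing in that case. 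A hypothetical structure falling in case (ii) with $k=2$ and $c<\pi\sqrt{2/3}$ would violate the corollary, and nothing in your argument excludes it.

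The paper closes this by a structural rather than asymptotic argument: after reducing (as you do) to the hereditarily cellular case, either $M$ is cellular, whence $\rn$ is polynomially bounded, or $M$ has a $\emptyset$-definable equivalence relation with infinitely many infinite classes; since passing to a reduct only decreases the orbit count, $\rn$ is bounded below by the growth rate of that equivalence relation, i.e.\ by the partition function itself, not merely by some $\exp\left(c'\sqrt{n}\right)$. Alternatively you could repair your proof by tracking the constant through Theorem \ref{thm:gpart}(i): the $k=2$ instances of case (ii) arise as $H\Wr S_\infty$ with $f_H(n)$ eventually constant equal to some $a\geq 1$, and one would need to check from \cite{gpart} that the resulting constant $b(a,0)$ is always at least the partition-function constant $\pi\sqrt{2/3}$. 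That is doable but requires opening up the cited asymptotics, whereas the reduct argument is one line.
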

\begin{proof}
	Again, it suffices to consider the hereditarily cellular structures. If $M$ is cellular then $f_M(n)$ is bounded above by a polynomial. Otherwise $M$ has a $\emptyset$-definable equivalence relation with infinitely many infinite classes, so its growth rate is at least the partition function.
\end{proof}

\section{Questions}

We have noted the growth rate of an $\omega$-categorical structure is the same as the function $\sub$ counting the $n$-substructures of a homogeneous $\omega$-categorical structure. However, we may consider $\sub$ for an arbitrary countable structure (e.g., see \cite{Pou}), or even more generally count the isomorphism types of size $n$ in an arbitrary hereditary class (e.g., see \cite{Kla}). For slow growth rates, the spectrum of growth rates seems to be approximately the same for all three questions. For example, the analogue of Corollary \ref{thm:pfe} holds for hereditary classes of graphs \cite{BBSS}.

\begin{question}
Do corollaries \ref{thm:pfe} and \ref{thm:poly part} hold for the growth rate of a hereditary class of structures in a finite relational language? If not, do they hold for $\sub$ for arbitrary countable $M$?
\end{question}

Even in the $\omega$-categorical case, some questions remain about structures with slower than exponential growth rate. First, although classifying the stable structures with slower than exponential growth suffices to understand the spectrum of growth rates, we may still try to classify such structures without assuming stability. For one example, consider an equivalence relation $E$ with infinitely many classes, each refined into infinitely many infinite $E'$-classes, and then expand by a relation equipping each $E'$-class with a dense linear order. For another example, consider an equivalence relation with two infinite classes, each equipped with a betweenness relation, such that automorphisms must preserve or reverse the order on both classes simultaneously. The proofs of \cite{Sim}*{Theorem 5.1, Corollary 5.5} suggest the following conjecture, although the structure $M_*$ there might be a reduct of the $M_0$ below.

\begin{conjecture} \label{conj:class}
The growth rate of $M$ is slower than exponential if and only if, up to interdefinability, $M$ is a reduct of an expansion of a hereditarily cellular structure $M_0$ by a binary relation $<$ satisfying the following conditions.
\begin{enumerate}
\item There is a $\emptyset$-definable equivalence relation $E$ on the reduct $M_0$ such that every $E$-class $C$ is either infinite with $Aut(M_0)_{(M_0 \bs C)}^C = S_\infty$ or a singleton.
\item If $M \models x < y$ then $x, y$ are contained in a single infinite $E$-class.
\item The restriction of $<$ to any infinite $E$-class is a dense linear order.
\end{enumerate}
\end{conjecture}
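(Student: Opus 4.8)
The plan is to prove the two directions of the biconditional separately: the backward direction is a short growth-rate computation, while the forward direction is the substantial one. For the backward direction, suppose $M$ is, up to interdefinability, a reduct of $M_0^+ := (M_0, <)$ with $M_0$ hereditarily cellular and $<$ satisfying (1)--(3); then $\varphi_M(n) \le \varphi_{M_0^+}(n)$, so it suffices to bound $\varphi_{M_0^+}(n)$, and I claim in fact $\varphi_{M_0^+}(n) = \varphi_{M_0}(n)$. The forgetful map from iso-classes of $n$-element substructures of $M_0^+$ to those of $M_0$ is clearly surjective, and it is injective: if $f$ is an $M_0$-isomorphism between substructures $S_1$ and $S_2$, then on the part $A$ of $S_i$ lying in an infinite $E$-class, condition (1) (the $S_\infty$-condition) says $M_0$ induces only equality there over the complement, so $f$ may be post-composed with an arbitrary permutation of $A$ without leaving $Aut(M_0)$; choosing the permutation that makes $f|_A$ order-preserving --- possible since by (2)--(3) the sets $A$ and $f(A)$ are finite linear orders of equal size --- upgrades $f$ to an $M_0^+$-isomorphism. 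As $M_0$ is hereditarily cellular, $\varphi_{M_0}(n)$ is slower than exponential by Theorem \ref{thm:main}(i) (with precise asymptotics from Theorem \ref{lemma:cell growth} and Lemmas \ref{lemma:d2 growth}--\ref{lemma:d3 growth}), so we are done.

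For the forward direction, suppose $\varphi_M(n)$ is slower than exponential. Then for every polynomial $p$ one has $\varphi_M(n) < \phi^n/p(n)$ eventually (given $1 < d < \phi$, eventually $\varphi_M(n) < d^n$ while $p(n) < (\phi/d)^n$), so Theorem \ref{thm:S 1.6} produces a stable reduct $M_*$ with $f_{M_*}(n) = \varphi_M(n)$; by Theorem \ref{thm:main} this $M_*$ is monadically stable, hence hereditarily cellular by Theorem \ref{thm:G2}. The remaining task is to show that whatever structure $M$ carries beyond $M_*$ consists of exactly dense linear orders living on $S_\infty$-cells, in the shape of (1)--(3). I would attack this by reopening the proof of Theorem \ref{thm:S 1.6}, i.e. \cite{Sim}*{Theorem 5.1, Corollary 5.5}: the passage from $M$ to $M_*$ there should be repackageable as the construction (over finitely many parameters, then absorbed) of a $\emptyset$-definable equivalence relation $E$ on $M$ whose quotient is hereditarily cellular and such that the structure induced on each $E$-class is, up to interdefinability, either trivial or a dense linear order on an otherwise pure $S_\infty$-set; one then takes $M_0$ to be the reduct of $M$ forgetting these orders --- stable of slow growth, hence hereditarily cellular by Theorems \ref{thm:main} and \ref{thm:G2} --- and $<$ their union, and verifies (1)--(3). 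As the remark before the conjecture warns, $M_*$ may be only a proper reduct of the correct $M_0$, so the hereditarily cellular skeleton has to be reconstructed from $M$ directly rather than taken to be $M_*$.

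The hard part is exactly this reconstruction --- specifically the assertion that \emph{no} unstable configuration other than dense orders sitting on $S_\infty$-cells is compatible with slower-than-exponential growth. Simon's results certify that such an $M$ becomes stable after a controlled forgetting of order-type data, but not that the data forgotten is as rigid as (1)--(3) requires: one must rule out twisted or correlated orders spanning several cells (such as the simultaneously-reversed betweenness example in Section 6, which would have to be exhibited as a reduct of the conjectured form rather than a counterexample) and orders interacting with more than one level of the cellular hierarchy. Settling this amounts to a classification of the $\omega$-categorical structures of slower-than-exponential growth; I expect it to require combining the cellular decomposition of $M_*$ with a quantitative accounting showing that each admissible order-like feature contributes only a partition-function-type factor to the growth rate --- so that any configuration forcing exponential growth, like the dense order \emph{between} cells of Example \ref{ex:growth}(3), is excluded --- together with an argument that these features are exhaustive.
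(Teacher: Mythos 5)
There is a genuine gap, but it is worth being precise about where you stand relative to the paper: the statement you were asked to prove is stated in the paper as Conjecture~\ref{conj:class}, and the paper offers \emph{no} proof of it --- it only remarks that the proofs of \cite{Sim}*{Theorem 5.1, Corollary 5.5} ``suggest'' it, and explicitly flags the very obstruction you identify (that Simon's $M_*$ may be a proper reduct of the desired $M_0$). Your backward direction is essentially fine: the reduction to bounding the growth of $(M_0,<)$, and the observation that adding a dense order to a class on which $Aut(M_0)$ induces $S_\infty$ over the complement does not change the orbit count (correct each class separately using the $S_\infty$-condition and density), is a legitimate short argument, modulo being careful that the paper's growth rate is orbit-counting $f_M(n)$ rather than $\varphi_M(n)$, so the correction should be phrased via automorphisms mapping one $n$-set to another rather than via isomorphisms of finite substructures.

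The forward direction, however, is not proved in your proposal --- and it is exactly the open content of the conjecture. After invoking Theorem~\ref{thm:S 1.6} and Theorem~\ref{thm:main} to get a hereditarily cellular stable reduct $M_*$ with the same growth rate, everything that remains is deferred to a program (``reopening the proof of Theorem~\ref{thm:S 1.6}\dots one then\dots verifies (1)--(3)'') whose key step --- that the only unstable structure compatible with slower-than-exponential growth is a union of dense orders sitting on $S_\infty$-cells of a hereditarily cellular skeleton reconstructed from $M$ itself, with no twisted or cross-cell order configurations --- is asserted as an expectation, not established. In particular you do not show how to produce the equivalence relation $E$ and the skeleton $M_0$ from $M$ (as opposed to from $M_*$), nor how to exclude, say, orders correlated across several cells beyond noting that the betweenness example of Section~6 ``would have to be exhibited as a reduct of the conjectured form.'' Since the paper itself leaves this as a conjecture, no comparison with an official proof is possible; what you have is a correct proof of the easy direction plus an accurate diagnosis of why the hard direction is hard, which is not the same as a proof.
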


 There are finer gaps in the spectrum of growth rates than those described by Theorem \ref{thm:phi growth}. For example, in \cite{olig}*{\S 3.6} it is mentioned that the growth rate is either eventually constant or at least linear with slope $1/2$, and the following question is posed in the polynomial case.

\begin{problem} \label{prob:spec}
Given $k$ and $r$, which values of $c$ are achievable in each case of Theorem \ref{thm:phi growth}?
\end{problem}

In fact, the polynomial case of Problem \ref{prob:spec} suffices, since the constants $b(a,k)$ and $c(b,k)$ in Theorem \ref{thm:gpart} are specified in \cite{gpart}*{Theorem 2.1}.

The orbit algebra of \cite{alg} encodes much information about the growth rate, and is a significant tool for the analysis of the polynomial case in \cite{FT}, where the algebra is shown to be Cohen-Macaulay for cellular structures.

\begin{problem} \label{prob:alg}
Investigate the orbit algebras of hereditarily cellular structures.
\end{problem}

The next natural range of growth rates to consider are the exponential growth rates, i.e. those bounded above by $c^n$ for some $c \in \R$.

\begin{problem} \label{prob:exp}
Classify the $\omega$-categorical $M$ with $\rn < c^n$ for some $c \in \R$. Describe the corresponding spectrum of growth rates.
\end{problem}

We repeat two questions of Macpherson concerning the structures from Problem \ref{prob:exp}. The first asks whether, for homogeneous structures, such $M$ are exactly those such that $Age(M)$ contains no infinite antichains under embeddability \cite{MacHom}*{Question 2.2.7}. The second asks, given such an $M$ with $Aut(M)$ primitive and not highly transitive, whether $Aut(M)$ is contained in a Jordan group that is not highly transitive \cite{MacJord}*{Problem 8.1}.

We also conjecture the following, and note a tree decomposition for monadically NIP structures is conjectured in \cite{BS} similar to the one provided there for the monadically stable case.

\begin{conjecture}
If $M$ is $\omega$-categorical, then $\rn < c^n$ for some $c \in \R$ if and only if $M$ is monadically NIP.
\end{conjecture}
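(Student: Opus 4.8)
The plan is to mirror the proof of Theorem~\ref{thm:main}, replacing stability by NIP and monadic stability by monadic NIP. The statement splits into a lower-bound direction ($M$ not monadically NIP $\Rightarrow$ $\rn$ is not bounded by $c^n$ for any $c$) and an upper-bound direction ($M$ monadically NIP $\Rightarrow$ $\rn < c^n$ for some $c$). The first direction should go through with the machinery already developed here, once an appropriate Baldwin--Shelah-style coding dichotomy for monadic NIP is invoked; the second direction requires a structure theory for $\omega$-categorical monadically NIP structures that does not yet exist, and that is where the real difficulty lies.

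For the lower bound: if $M$ is $\omega$-categorical and not monadically NIP, then either $M$ already has the independence property, or $M$ is NIP but some expansion by finitely many constants and unary predicates has the independence property --- the latter being the content of the finitary Baldwin--Shelah characterization of monadic NIP (cf.\ \cite{BS}, and compare Theorem~\ref{thm:ms coding} for the monadically stable case). In either case, after Morleyizing (so that $Th(M)$ has quantifier elimination and $\rn = \sub$) and passing to such a mild expansion $M^+$, there is a quantifier-free formula $\psi$ and infinite disjoint $A, B \subset M^+$ such that every finite bipartite graph with parts drawn from $A$ and $B$ is realized by $\psi$ on a finite subset of $A \cup B$ --- exactly the situation of Remark~\ref{rem:coding}. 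The counting argument of Lemma~\ref{lemma:ms} then applies: non-isomorphic finite bipartite graphs with distinguished parts give non-isomorphic substructures of $M^+$, and since there are at least $2^{\floor{n/2}^2}/n!$ of them on parts of total size $n$, $\varphi_{M^+}(n)$ grows faster than exponential; Lemmas~\ref{lemma:const} and~\ref{lemma:unary growth} transfer this to $M$. So $\rn$ exceeds $c^n$ for every $c$ --- indeed it is faster than exponential, so one does not merely cross the ``at most exponential'' boundary but jumps well past it.

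For the upper bound one wants the monadically NIP analogue of the chain Theorem~\ref{thm:G2} $\to$ Lemmas~\ref{lemma:loose growth},~\ref{lemma:wr prod} used to prove Theorem~\ref{thm:main}(i). The required input is a classification of $\omega$-categorical monadically NIP structures generalizing Lachlan's hereditarily cellular classification: one expects $Aut(M)$ to be built from finite permutation groups \emph{and} from the $\omega$-categorical primitive monadically NIP groups --- $S_\infty$, $Aut(\Q,<)$, and the remaining order-like reducts --- by iterated loose unions and $\omega$-stretches, much as in Conjecture~\ref{conj:class} but without the restriction confining the orders to $S_\infty$-classes (that restriction is precisely what separates slower-than-exponential growth from the exponential growth of Example~\ref{ex:growth}(3)). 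Granting such a decomposition, the growth bound is an induction on the construction: the new base pieces $Aut(\Q,<)$ and its reducts have growth at most $\phi^n$, hence at most exponential; loose unions preserve ``at most exponential'' by the product-and-finite-index estimate of Lemma~\ref{lemma:loose growth}; and $\omega$-stretch preserves it since, by Lemma~\ref{lemma:stretch growth} and the identity $f_{G \Wr S_\infty}(t) = \Pi_{n\geq1}(1-t^n)^{-f_G(n)}$ of \cite{olig}*{3.7}, the generating function has radius of convergence $1/\limsup_n f_G(n)^{1/n}$, which is positive whenever $f_G$ is at most exponential, forcing the coefficients to stay at most exponential. This last point is the NIP analogue of Lemma~\ref{lemma:wr prod}.

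The main obstacle is therefore the structure theorem, not the growth-rate bookkeeping: the tree decomposition for monadically NIP theories is only conjectured in \cite{BS}, and even granting it one must still carry out the reduction to the $\omega$-categorical case, pin down the primitive building blocks, and verify that the resulting class of automorphism groups is closed under the two operations in the way used above. A fallback avoiding a full classification would be to extract directly from monadic NIP a bound on the structural complexity of $Age(M)$ --- for instance a decomposition of each finite substructure into boundedly many uniformly definable, linearly ordered pieces, yielding a Shelah-style bound on the number of quantifier-free $n$-types --- and count $n$-substructures from that; but making this uniform over all $\omega$-categorical monadically NIP $M$ appears to demand essentially the same structural understanding.
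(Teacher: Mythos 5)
The statement you are proving is stated in the paper as an open conjecture; the paper offers no proof of it, only the remark that the ``only if'' direction (not monadically NIP implies faster than exponential growth) is established in \cite{BL}, and that even the tree decomposition for monadically NIP theories is merely conjectured in \cite{BS}. Your proposal is accordingly a research plan rather than a proof, and you say as much: the upper-bound direction hinges on a Lachlan-style classification of $\omega$-categorical monadically NIP structures that does not exist. That is precisely the missing idea, and no amount of growth-rate bookkeeping (your use of Lemmas \ref{lemma:loose growth}, \ref{lemma:stretch growth}, and the product formula from \cite{olig}*{3.7}, which is correctly deployed -- a positive radius of convergence does bound the coefficients exponentially) substitutes for it. Your proposed shape for the classification is also not obviously right: it is not clear that the primitive building blocks are exhausted by $S_\infty$ and the reducts of $(\Q,<)$, nor that loose union and $\omega$-stretch suffice as the only assembly operations once orders are present; Conjecture \ref{conj:class} in the paper is already hedged about reducts for exactly this reason.

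The lower-bound direction also has a concrete gap as written. Theorem \ref{thm:ms coding} requires stability, so for $M$ unstable (in particular for $M$ with IP) you cannot invoke strongish coding, and Lemma \ref{lemma:ms} as stated needs a bijection from $A\times B$ to $C$ acting on \emph{singletons}. A formula witnessing IP generally has a tuple of parameter variables, and turning ``all bipartite graphs occur as traces of $\psi(x,\bar y)$'' into ``superexponentially many non-isomorphic $n$-element substructures'' requires additional bookkeeping over the tuple length and over which substructure actually determines the graph; this is the content of the arguments in \cite{Mac3} and \cite{BL}, not something you get for free from Remark \ref{rem:coding}. So the direction you call ``within reach'' is indeed known, but not by the route you sketch. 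In short: your diagnosis of where the difficulty lies is accurate, but the proposal does not close either direction, and the statement remains a conjecture.
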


In \cite{BL}, it is shown that if $M$ is not monadically NIP then its growth rate is faster than exponential, and partial progress is made towards the question of Macpherson about infinite antichains.

The subcase of Problem \ref{prob:exp} for $M$ with $\rn < \frac{2^n}{p(n)}$ for every polynomial $p(n)$ is within more immediate reach. By \cite{Sim}*{Theorem 5.1}, these structures are obtained from the structures with slower than exponential growth by taking finite covers of the sets equipped with unstable reducts of dense linear orders. We define the generalized Fibonacci numbers of order $k$ by the recurrence $F_{n,k} = \sum_{i=1}^{i=k} F_{n-i,k}$, with $F_{n,k} = 0$ for $n < 0$ and $F_{0,k}=1$, so $\lim_{n \to \infty} F_{n,k}^{1/n}$ is the reciprocal of the smallest real root of $1-x-\dots-x^k$. Generalizing Example \ref{ex:growth}(3), $F_{n,k}$ is the growth rate of the analogous structure but with classes of size $k$.

\begin{conjecture}
If $M$ is $\omega$-categorical  and $\rn < \frac{2^n}{p(n)}$ for every polynomial $p(n)$, then there is some $k \in \N$ such that $\lim_{n \to \infty} F_{n,k}^{1/n} = \lim_{n \to \infty} \rn^{1/n}$.
\end{conjecture}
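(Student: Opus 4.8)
\emph{Proof proposal.} The plan is to use Simon's structural theorem to present $M$ as an explicit assembly of simpler pieces and then compute $\lim\rn^{1/n}$ component by component. If $M$ is finite there is nothing to prove, so assume $M$ infinite; then $\rn\to\infty$, so $\liminf\rn^{1/n}\ge1$, and if moreover $\rn<\phi^n/p(n)$ for every polynomial $p$ then $\rn$ is slower than exponential by Theorem~\ref{thm:phi growth}, so $\lim\rn^{1/n}=1=\lim F_{n,1}^{1/n}$ (as $F_{n,1}=1$ for all $n$) and $k=1$ works. Assume henceforth $\rn\ge\phi^n/p_0(n)$ eventually for some polynomial $p_0$, while still $\rn<2^n/p(n)$ for every polynomial $p$. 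Then $M$ lies in the scope of \cite{Sim}*{Theorem~5.1}: up to interdefinability $M$ is obtained from a hereditarily cellular structure by passing to finite covers, with fibres of some sizes $k\ge1$, of certain $\emptyset$-definable sets and equipping those with unstable reducts of dense linear orders; correspondingly $M$ is assembled by iterated loose unions and $\omega$-stretches (Definition~\ref{def:loose stretch}) --- exactly as the hereditarily cellular structures of Theorem~\ref{thm:G2} are --- out of finitely many \emph{order components} (the finite covers just described) and hereditarily cellular components. Since $\rn$ is not slower than exponential, some order component has fibre size $\ge2$: an assembly of hereditarily cellular structures by loose unions and $\omega$-stretches has automorphism group in $\GG$, hence is hereditarily cellular (Theorem~\ref{thm:G2}) and so slower than exponential (Theorem~\ref{thm:main}), and an order component of fibre size $1$ is a reduct of a dense linear order with growth rate constantly $1$.

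Next I pin down the growth of a single order component $N$ with fibre size $k$. Projecting an $n$-subset of $N$ onto the ordered base and recording the fibre occupancies along the order yields a sequence $(c_1,\dots,c_m)$ with each $c_i\in\{1,\dots,k\}$ and $\sum_i c_i=n$, well defined up to the finite symmetry group of the reduct (reversal, and rotation in the circular case), whose effect on counts is a factor polynomial in $n$. Distinct occupancy sequences lie in distinct $Aut(N)$-orbits, and the number of occupancy sequences with total $n$ is $F_{n,k}$, so $\liminf f_N(n)^{1/n}\ge g_k:=\lim F_{n,k}^{1/n}$, the reciprocal of the least positive root $r_k$ of $1-x-\dots-x^k$. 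On the other hand $f_N(n)\le G_n$, where $G_n=\sum_{j=1}^k b_j G_{n-j}$ (with $G_0=1$) counts occupancy sequences weighted by the number $b_j$ of orbits of the cover's binding group on the $j$-subsets of a fibre; since this recurrence has generating function $\big(1-\sum_{j=1}^k b_jt^j\big)^{-1}$, $\limsup f_N(n)^{1/n}\le1/r$ with $r$ the least positive root of $1-\sum_{j=1}^k b_jx^j$. If every $b_j=1$ these roots agree, so $\lim f_N(n)^{1/n}=g_k$. If $b_m\ge2$ for some $m$, then $\sum_{j=1}^k b_j2^{-j}\ge(1-2^{-k})+2^{-m}\ge1$, hence $r\le\tfrac12$ and the rate $1/r$ is $\ge2$; a finer (transfer-matrix) analysis of the cover shows this rate is genuinely attained by $f_N(n)$, so by Lemmas~\ref{lemma:fin index} and~\ref{lemma:loose growth} (applied to the place of $N$ in the assembly of $M$) $\rn>2^n/q(n)$ for some polynomial $q$, contradicting the hypothesis. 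Thus every order component of fibre size $k$ has $\lim f_N(n)^{1/n}=g_k$, and the hereditarily cellular components have exponential rate $\le1$.

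It remains to combine the components. A loose union of $G_1,\dots,G_s$ has growth at most $(cn^s)\prod_i f_{G_i}(n)$ by Lemma~\ref{lemma:loose growth} and at least $\max_i f_{G_i}(n)$ (restrict $n$-subsets to a single block $X_i$, whose setwise stabiliser induces exactly $G_i$); since $\sum_{a_1+\dots+a_s=n}\prod_i\lambda_i^{a_i}\le(n+1)^s(\max_i\lambda_i)^n$, the exponential rate of a loose union is the maximum of those of its factors. By Lemma~\ref{lemma:stretch growth} an $\omega$-stretch of $G$ contains a finite-index subgroup acting as $H\Wr S_\infty$ with $H\le G$ of finite index, and by \cite{olig}*{3.7} the generating function of $f_{H\Wr S_\infty}$ is $\prod_{m\ge1}(1-t^m)^{-f_H(m)}$, whose radius of convergence is $\big(\limsup f_H(m)^{1/m}\big)^{-1}$; so by Lemma~\ref{lemma:fin index} an $\omega$-stretch has the same exponential rate as the group it stretches (which is $\ge1$; standard singularity analysis of these products also gives existence of the limit). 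Hence $\lim\rn^{1/n}$ exists and equals the maximum over components, $\max\big(1,\max_N g_{k_N}\big)=g_K$ where $K=\max_N k_N\ge2$; and $g_K=\lim F_{n,K}^{1/n}$, so $k=K$ works.

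\emph{Where I expect the difficulty.} The two load-bearing steps are invoking \cite{Sim}*{Theorem~5.1} in precisely the assembled form above --- the components beyond the hereditarily cellular ones being exactly finite covers of unstable reducts of dense linear orders, glued by loose unions and $\omega$-stretches --- and the order-component analysis, above all showing that the growth hypothesis forces every binding group to act set-transitively on its fibre ($b_j=1$ for all $j$), the alternative provably forcing exponential rate $\ge2$, and that the bound $G_n$ is genuinely attained in the bad case so that a real contradiction ensues. The combination step is then routine, modulo the elementary analytic-combinatorial fact that wreathing with $S_\infty$ does not lower the exponential rate; and the existence of $\lim\rn^{1/n}$, part of the assertion and not assumed a priori, comes out of the explicit description.
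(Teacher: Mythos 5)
Note first that the paper does not prove this statement: it appears only as an open conjecture (described as ``within more immediate reach'' of Problem \ref{prob:exp}), so there is no proof of the author's to compare yours against, and your proposal must stand on its own. It does not, because its first load-bearing step assumes a structural decomposition that is not available. The presentation of a structure as an iterated assembly by loose unions and $\omega$-stretches out of identifiable components is Lachlan's theorem (Lemma \ref{lemma:ind}, Theorem \ref{thm:G2}) and holds only for monadically stable, i.e.\ hereditarily cellular, structures. The $M$ at issue here are unstable --- they carry unstable reducts of dense linear orders --- and \cite{Sim}*{Theorem 5.1} does not deliver the ``assembled form'' you invoke, with order components and hereditarily cellular components glued by loose unions and $\omega$-stretches. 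Indeed, Conjecture \ref{conj:class} of this paper is precisely an attempt to pin down such a description in the slower-than-exponential regime; it is itself open, and the author explicitly flags uncertainty about whether Simon's $M_*$ is only a reduct of the conjectured $M_0$. So your reduction rests on a classification at least as strong as one the paper only conjectures.

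The second gap is in the order-component dichotomy, which you correctly identify as the other load-bearing step but do not actually establish. The upper bound $f_N(n)\le G_n$ via occupancy sequences weighted by binding-group orbit counts $b_j$ is plausible, but the dichotomy you need --- either $b_j=1$ for all $j$ and the exponential rate is exactly $\lim_n F_{n,k}^{1/n}$, or some $b_m\ge 2$ and $\rn$ itself exceeds $2^n/q(n)$ --- is asserted rather than proved. The second horn requires a \emph{lower} bound on orbit counts of $M$, i.e.\ exhibiting exponentially many configurations and showing that no automorphism of the ambient $M$ (not merely of the cover viewed in isolation) identifies them; ``a finer transfer-matrix analysis shows this rate is genuinely attained'' is exactly the missing argument. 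The first horn also needs care: the unstable reduct may be a circular order, betweenness or separation relation rather than a linear order, the cover need not split, and definable structure linking distinct fibres, or linking the order component to the rest of $M$, could move the count away from $F_{n,k}$ by more than a polynomial factor. The combination step (loose unions take the maximum of exponential rates, $\omega$-stretches preserve them via the Euler product of \cite{olig}*{3.7}) is sound, and the existence of $\lim\rn^{1/n}$ would indeed follow from an explicit description --- but both inherit the gaps above.
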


As mentioned before Theorem \ref{thm:prim}, the local order $S(2)$ shows the constant $c=2$ there is optimal. The following conjecture is mentioned in \cite{Sim}, and is another step toward Problem \ref{prob:exp}.

\begin{conjecture} \label{conj:mac}
Suppose $G$ is primitive and there are polynomials $p(n), q(n)$ such that $\frac{2^n}{p(n)} < f_G(n) < \frac{2^n}{q(n)}$. Then $G$ is either $Aut(S(2))$, or the group of automorphisms and anti-automorphisms of $S(2)$.
\end{conjecture}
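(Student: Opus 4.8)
As this is a conjecture, the following is a plan of attack rather than a complete argument. Replace $G$ by its closure and pass to the $\omega$-categorical $M$ with $Aut(M)=G$, Morleyized so that $\rn=\sub$.

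\emph{Step 1: locating the problem.} The hypothesis $f_G(n)<2^n/q(n)$ shows $\rn$ is not faster than exponential (take $c=3$ in the definition), so $M$ is monadically NIP by \cite{BL}. If $M$ were stable, Theorem \ref{thm:main} would force $\rn$ to be slower than exponential or at least $\floor{n/4}!$, neither compatible with $\rn=\Theta(2^n/\mathrm{poly})$. Hence $M$ is $\omega$-categorical, primitive, monadically NIP and unstable, and we sit exactly at the ``base $2$'' boundary --- out of reach of the reductions in \cite{Sim} (Theorems \ref{thm:S 1.6} and \ref{thm:S 1.3}, which need growth below $\phi^n/\mathrm{poly}$, respectively $2^n/\mathrm{poly}$).

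\emph{Step 2: reducing to the local order.} The key input should be a normal form for $\omega$-categorical monadically NIP structures of the type conjectured in \cite{BS}: a tree decomposition refining Theorem \ref{thm:lach} in which components may carry an unstable reduct of a dense linear order, as in Simon's description \cite{Sim}*{Theorem 5.1} of the sub-$2^n$ regime. Primitivity should collapse this decomposition almost entirely --- no nontrivial $\emptyset$-definable equivalence relation means no loose-union step and no $\omega$-stretch with finite fibres on top --- leaving a single sort carrying one of the five reducts of $(\Q,<)$ (linear, betweenness, circular, separation, trivial), possibly under a finite cover that preserves primitivity. Since these five orders all have profile $1$, the exponential growth must come from the cover; for a cover with fibres of size $k$ one computes that the growth rate has exponential base strictly below $2$ for the ``untwisted'' covers --- a generalized-Fibonacci base $\lim_n F_{n,k}^{1/n}$, the case $k=2$ being Example \ref{ex:growth}(3) --- and that among the primitivity-preserving (``twisted'') covers the base reaches $2$ only for the local order $S(2)$. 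So $\rn=\Theta(2^n/\mathrm{poly})$ forces $M$ to be interdefinable with $S(2)$ or a reduct of it.

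\emph{Step 3: the overgroup lattice of $Aut(S(2))$.} It remains to decide which closed groups comparable to $Aut(S(2))$ are primitive with growth still $\Theta(2^n/\mathrm{poly})$. This is a finite permutation-group computation: $Aut(S(2))$ is a Jordan group lying in a small, explicitly known lattice of closed subgroups of $S_\infty$, whose relevant members are $S_\infty$, the reduct groups of $(\Q,<)$ (all of growth $1$), $Aut(S(2))$ itself, and the index-$2$ overgroup consisting of the automorphisms and anti-automorphisms of $S(2)$, which is still primitive and, by Lemma \ref{lemma:fin index}, still of growth $\Theta(2^n/\mathrm{poly})$. Checking growth and primitivity across the lattice leaves precisely these last two groups.

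\emph{Main obstacle.} The decisive missing ingredient is Step 2: the required normal form for $\omega$-categorical monadically NIP structures at the base-$2$ boundary is essentially the content conjectured in \cite{BS}, and \cite{Sim}*{Theorem 5.1} only gives an upper-bound normal form strictly below $2^n/\mathrm{poly}$. Showing that base-$2$ growth together with primitivity forces the $S(2)$ configuration is tantamount to the present conjecture and is linked to Macpherson's open question, recalled just before Conjecture \ref{conj:mac}, of whether such $G$ embed in a non-highly-transitive Jordan group. A secondary, more routine difficulty is organizing the cover computation, as ``unstable reducts of dense linear orders'' is itself a nontrivial family: one should first use primitivity to discard the betweenness and separation reducts, reduce to the circular order, and only then carry out the generalized-Fibonacci estimate for twisted finite covers.
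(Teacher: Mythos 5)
The statement you are trying to prove is stated in the paper as an open conjecture (Conjecture \ref{conj:mac}); the paper offers no proof of it, so there is nothing to compare your argument against, and your proposal should be judged on its own terms as a research plan rather than as a proof. Read that way, your Step 1 is sound: the hypotheses do rule out stability via Theorem \ref{thm:main} and place $M$ in the monadically NIP, unstable, primitive regime, exactly outside the reach of Theorems \ref{thm:S 1.6} and \ref{thm:S 1.3}. Your Step 3 is also plausibly a finite check once Step 2 is in hand, since the reduct/overgroup lattice of $S(2)$ is known and Lemma \ref{lemma:fin index} controls the growth rate of the index-$2$ overgroup.

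The genuine gap is the one you yourself flag: Step 2 invokes a structure theorem for $\omega$-categorical monadically NIP structures at the base-$2$ boundary that does not exist. The tree decomposition for monadically NIP structures is itself only conjectured in \cite{BS}, and \cite{Sim}*{Theorem 5.1} applies only when $\rn < 2^n/p(n)$ for \emph{every} polynomial $p$, which is exactly the regime excluded by your hypothesis $f_G(n) > 2^n/p(n)$. So the claim that primitivity plus base-$2$ growth collapses everything onto a primitivity-preserving finite cover of a reduct of $(\Q,<)$, and that among such covers only $S(2)$ attains base $2$, is not a reduction of the conjecture but a restatement of it. Until either the monadic-NIP decomposition or Macpherson's Jordan-group question (recalled just before the conjecture in the paper) is resolved, Step 2 cannot be carried out, and the proposal does not constitute a proof.
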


\begin{acknowledgements}\label{ackref}
I thank Bertalan Bodor for pointing out errors in earlier versions and helping to formulate Conjecture \ref{conj:class}, Gregory Cherlin, Justine Falque, and Chris Laskowski for helpful discussions and feedback, Dugald Macpherson for suggesting Problem \ref{prob:alg},  Benyamin Riahi for helping the exposition, Pierre Simon for comments on questions posed in an earlier version, and the referee for helping the exposition.
\end{acknowledgements}

\begin{bibdiv}
\begin{biblist}

\bibselect{Bib}

\end{biblist}
\end{bibdiv}

\affiliationone{% in this example, two authors share an institution
   S. Braunfeld\\
   William E. Kirwan Hall\\
      College Park, MD 20742\\
   USA
   \email{sbraunf@umd.edu
   }}

\end{document}